\documentclass[secthm,seceqn,ussrhead,12pt]{amsart}
\usepackage[utf8]{inputenc}
\usepackage[english]{babel}
\usepackage{amsmath}
\usepackage{amssymb}
\usepackage{amsfonts,longtable}
\usepackage{mathrsfs}
\usepackage[all]{xy}
\usepackage{amsthm}
\usepackage{xcolor}
\usepackage{tikz-cd}
\usepackage{cancel}
\usepackage{ulem}
\usepackage{eqnarray,amsmath}
\usepackage{calrsfs}
\DeclareMathAlphabet{\pazocal}{OMS}{zplm}{m}{n}

\usepackage{blkarray, bigstrut} 

\setlength{\topmargin}{0mm}
\setlength{\textwidth}{170mm}  
\setlength{\textheight}{200mm}
\setlength{\evensidemargin}{0mm}
\setlength{\oddsidemargin}{0mm}

\newcommand{\bigzero}{\mbox{\normalfont\Large\bfseries 0}}
\newcommand{\rvline}{\hspace*{-\arraycolsep}\vline\hspace*{-\arraycolsep}}
\newcommand{\mi}{\mathfrak{m}}

\newtheorem{Th}{Theorem}
\newtheorem{Lem}[Th]{Lemma}
\newtheorem{prop}[Th]{Proposition}
\newtheorem{proposition}[Th]{Proposition}
\newtheorem{Remark}[Th]{Remark}

\newtheorem{corollary}[Th]{Corollary}

\newcommand{\F}{{\mathbb{F}}} 
\newcommand{\Fs}{{\tiny\mathbb{F}}} 
\newcommand{\Z}{{\mathbb{Z}}} 
\newcommand{\FF}{{\mathbb{F}}} 

\newcommand{\Aut}{{\mathrm{Aut}}}
\newcommand{\tr}{\mathop{\rm Tr}}
\newcommand{\tor}{\mathop{\rm Tor}}

\newcommand{\Sa}{S}

\newcommand{\M}{\mathcal{M}}
\newcommand{\Endo}{\mathop{\text{End}}}
\newcommand{\rad}{\mathop{\text{rad}}}

\def\esc#1{\langle#1\rangle}
\def\sc{strongly connected}
\def\remove#1{}
\def\aut{\mathop{\hbox{aut}}}
\def\Aff{\mathop{\hbox{\rm Aff}}}
\def\affaut{\mathop{\hbox{\bf aut}}}
\def\alg{\mathop{\hbox{\rm Alg}}}
\def\grp{\mathop{\hbox{Grp}}}
\def\ch{\mathop{\hbox{\rm char}}}
\def\a{\alpha}

\def\b{\beta}
\def\g{\gamma}
\def\d{\delta}
\def\l{\lambda}
\def\m{\mu}
\def\tree{\mathop{\hbox{\bf tree}}}
\def\lie{\mathop{\hbox{\bf lie}}}
\def\GL{\mathop{\hbox{\bf GL}}}

\def\gl{\mathop{\hbox{\rm gl}}}
\def\aff{\mathop{\mathfrak{aff}}}
\def\ll{\mathcal{\ell}}
\def\o{\otimes}

\title{Conservative algebras of $2$-dimensional algebras, V}

\author[I. Kaygorodov]{Ivan Kaygorodov}
\address{I. Kaygorodov: CMA-UBI, Universidade da Beira Interior, Covilhã, Portugal; 
Moscow Center for Fundamental and Applied Mathematics, Moscow,   Russia; 
Saint Petersburg State University, Russia.
} 

\email{kaygorodov.ivan@gmail.com}

\author[D. Mart\'{\i}n]{
 Dolores Mart\'{\i}n Barquero}
\address{D. Mart\'{\i}n Barquero: Departamento de Matem\'atica Aplicada. Escuela de Ingenier\'\i as Industriales. Universidad de M\'alaga, Campus de Teatinos. 29071 M\'alaga,   Spain.}
\email{dmartin@uma.es}

\author[C. Mart\'{\i}n]{C\'andido Mart\'{\i}n Gonz\'alez}
\address{C. Mart\'{\i}n: Departamento de \'Algebra Geometr\'{\i}a y Topolog\'{\i}a, Fa\-cultad de Ciencias, Universidad de M\'alaga, Campus de Teatinos. 29071 M\'alaga. Spain.} \email{candido\_m@uma.es}

\begin{document}

\maketitle

\begin{abstract}
    The notion of conservative algebras appeared in a paper of Kantor in 1972. Later, he defined the conservative algebra $W(n)$ of all algebras (i.e. bilinear maps) on the $n$-dimensional vector space. If $n>1$, then the algebra $W(n)$  does not belong to any well-known class of algebras (such as associative, Lie, Jordan, or Leibniz algebras). It looks like that $W(n)$ in the theory of conservative algebras plays a similar role with the role of $\mathfrak{gl}_n$ in the theory of Lie algebras. Namely, an arbitrary conservative algebra can be obtained  from a universal algebra $W(n)$ for some $n \in \mathbb{N}.$    
    The present paper is a  part of a series of papers, which dedicated to the study of the algebra $W(2)$ and its principal subalgebras.
\end{abstract}

\medskip

{\bf Keywords:} bilinear maps, conservative algebra, graphs.

\ 

{\bf  MSC2020: }  17A30\footnote{Corresponding author: Ivan Kaygorodov (kaygorodov.ivan@gmail.com)}

\medskip

\section*{Introduction} 

During this paper, $\FF$ is some fixed field of zero characteristic. 
We will use the notation $R$ for a (commutative) ring with unit. Also if $M$ is an $R$-module and $n\ge 2$ an integer, we write
$\tor_n(M):=\{x\in M\colon n x=0\}$.
The algebras under consideration in this work are not necessarily
unital or associative.
A multiplication on a vector space $W$ is a bilinear mapping $W\times W \to W$. We denote by $(W,P)$ the algebra with underlining space $W$ and multiplication $P$. Given a vector space $W$, a linear mapping $A:W\rightarrow W$, and a bilinear mapping $B:W\times W\to W$, we can define a multiplication $[ A,B ] :W\times W\to W$ by the formula
$$[ A,B ] (x,y)=A(B(x,y))-B(A(x),y)-B(x,A(y))$$
for $x,y\in W$. For an algebra $A$ with a multiplication $P$ and $x\in A$ we denote by $L_x^P$ the operator of left multiplication by $x$. If the multiplication $P$ is fixed, we write $L_x$ instead of $L_x^P$.

In 1990 Kantor~\cite{Kantor90} defined the multiplication $\cdot$ on the set of all algebras (i.e. all multiplications) on the $n$-dimensional vector space $V_n$ as follows:
$$A\cdot B = [L_{e}^A,B],$$
where $A$ and $B$ are multiplications and $e\in V_n$ is some fixed vector.
Let $W(n)$  denote the algebra of all algebra structures on $V_n$ with multiplication defined above.
If $n>1$, then the algebra $W(n)$ does not belong to any well-known class of algebras (such as associative, Lie, Jordan, or Leibniz algebras). The algebra $W(n)$ turns out to be a conservative algebra (see below).

In 1972 Kantor~\cite{Kantor72} introduced conservative algebras as a generalization of Jordan algebras
 (also, see a good written survey about the study of conservative algebras and superalgebras \cite{pp20}). Namely, an algebra $A=(W,P)$ is called a {\it conservative algebra} if there is a new multiplication $F:W\times W\rightarrow W$ such that
\begin{eqnarray*}\label{tojd_oper}
[L_b^P, [L_a^P,P]]=-[L_{F(a,b)}^P,P]
\end{eqnarray*}
for all $a,b\in W$. In other words, the following identity holds for all $a,b,x,y\in W$:
\begin{multline*}\label{tojdestvo_glavnoe}
b(a(xy)-(ax)y-x(ay))-a((bx)y)+(a(bx))y+(bx)(ay)\\
-a(x(by))+(ax)(by)+x(a(by))
=-F(a,b)(xy)+(F(a,b)x)y+x(F(a,b)y).
\end{multline*}
The algebra $(W,F)$ is called an algebra {\it associated} to $A$.
The main subclass of conservative algebras is the variety of terminal algebras, 
which defined by the conservative identity with 
$F(a,b)=\frac{1}{3}(2 ab+ ba).$
It includes the varieties of Leibniz and Jordan algebras as subvarieties.

Let us recall some well-known results about conservative algebras. In~\cite{Kantor72} Kantor classified all simple conservative algebras and triple systems of second-order and defined the class of terminal algebras as algebras satisfying some certain identity. He proved that every terminal algebra is a conservative algebra and classified all simple finite-dimensional terminal algebras with left quasi-unit over an algebraically closed field of characteristic zero~\cite{Kantor89term}. Terminal trilinear operations were studied in~\cite{Kantor89tril}, and some questions concerning the classification of simple conservative algebras were considered in~\cite{Kantor89trudy}. After that, Cantarini and Kac classified simple finite-dimensional (and linearly compact) super-commutative and super-anticommutative conservative superalgebras and some generalization of these algebras (also known as ``rigid'' or quasi-conservative superalgebras) over an algebraically closed field of characteristic zero \cite{kac_can_10}.
The classification of all $2$-dimensional conservative and rigid (in sense of Kac-Cantarini) algebras is given in \cite{cfk};
and also, the algebraic and geometric classification of nilpotent low dimensional terminal algebras is given in \cite{kkp19,kks19}.

The algebra $W(n)$ plays a similar role in the theory of conservative algebras as the Lie algebra of all $n\times n$ matrices $\mathfrak{gl}_n$ plays in the theory of Lie algebras. Namely, in~\cite{Kantor88,Kantor90}  Kantor considered the category $\mathcal{S}_n$ whose objects are conservative algebras of non-Jacobi dimension $n$. It was proven that the algebra $W(n)$ is the universal attracting object in this category, i.e., for every $M\in\mathcal{S}_n$ there exists a canonical homomorphism from $M$ into the algebra $W(n)$. In particular, all Jordan algebras of dimension $n$ with unity are contained in the algebra
$W(n)$. The same statement also holds for all noncommutative Jordan algebras of dimension $n$ with unity.
Some properties of the product in the algebra $W(n)$ were studied in \cite{kantorII,kay}.
The universal conservative superalgebra was constructed in \cite{kpp19}.
The study of low dimensional conservative algebras was started in \cite{kaylopo}.
The study of properties of $2$-dimensional algebras is also one of popular topic in non-associative algebras (see, for example, \cite{Giambruno,kayvo2,GR11}) and as we can see the study of properties of the algebra $W(2)$ could give some applications on the theory of $2$-dimensional algebras. So, from the description of idempotents of the algebra $W(2)$ it was received an algebraic classification of all $2$-dimensional algebras with left quasi-unit \cite{kayvo}. 
Derivations and subalgebras of codimension 1 of the algebra $W(2)$ and of its principal subalgebras $W_2$ and $S_2$ were described \cite{kaylopo}.
Later, the automorphisms, one-sided ideals, idempotents and local (and $2$-local) derivations and automorphisms of 
$W(2)$ and its principal subalgebras were described in \cite{arzikulov,kayvo,consiv}.
Note that $W_2$ and $S_2$ are simple terminal algebras with left quasi-unit from the classification of Kantor \cite{Kantor89term}.
The present paper is devoted to continuing the study of properties of $W(2)$ and its principal subalgebras. {We pay also some attention to the description of the affine group scheme of automorphisms of the algebras under scope, with an eye on the classification of gradings of these algebras (over arbitrary fields), which will deserve a forthcoming paper}.

\section{The graph of an algebra basis}

In this section, the ground field $\F$ will not be assumed to have characteristic zero.
For an arbitrary $\F$-algebra $A$ we will denote by $\M(A)$ (or simply $\M$ if there is no possible ambiguity), the multiplication algebra of $A$, that is, the subalgebra of $\Endo_{\F}(A)$ (where $A$ is considered as a vector space) generated by left and right multiplication operators. 
We will denote by $\M_1(A)$ the subalgebra of $\Endo_{\F}(A)$  generated by $1$ and $\M(A)$.
Observe that if $A$ is an algebra whose multiplication algebra is $\M$, and $S\subset A$ a subset, the ideal of $A$ generated by $S$ agrees with $\M S$ (defined as the linear span of the elements $T(x)$ where $T\in\M$ and $x\in S$). 

Assume $A$ is an algebra over a field $\F$.  Fix a basis
$(u_i)_{i\in I}$ of $A$. Then we can construct a graph whose vertices are the basic elements $u_i$ and for any two vertices we draw an arrow from $u_i$ to $u_j$ if $u_j=T(u_i)$ for some $T$ in $\M_1(A)$. This relation $T(u_i)=u_j$ will be denoted $u_i\ge u_j$. The relation $\ge$ is reflexive and transitive. 
However, to simplify the resulting graph, (1) we will not draw an arrow from each $u_i$ to itself (as we should); and (2) if $u_i\ge u_j\ge u_k$ we will draw an arrow from $u_i$ to $u_j$ and another from $u_j$ to $u_k$ but there will be no need to draw the arrow from $u_i$ to $u_k$. So there are many choices to draw the simplified graph but they all give the same information.

For instance 
\[
\xygraph{ !{<0cm,0cm>;<1.5cm,0cm>:<0cm,1.2cm>::} 
!{(0,0) }*+{e_1}="u"
!{(0,1) }*+{e_2}="a"
!{(1,1) }*+{e_3}="b" 
!{(1,0)}*+{e_4.}="c" 
"u":"a" "a":"b"
"b":@/^/"c" "b":"u"
"c":@/^/"b"
} 
\]
is the graph associated to the algebra $\Sa_2$ whose multiplication table (for a ground field of characteristic other than $3$)
 is 
given below 

\begin{longtable}{|c|c|c|c|c|c|c|c|c|}
\hline
      & $e_1$ & $e_2$ & $e_3$ & $e_4$ \\ \hline
$e_1$ & $-e_1$ & $-3e_2$ & $e_3$ & $3e_4$ \\ \hline
$e_2$ & $3e_2$ & $0$ & $2e_1$ & $e_3$  \\ \hline
$e_3$ & $-2e_3$ & $-e_1$ & $-3e_4$ & $0$   \\ \hline
$e_4$ & $0$ & $0$ & $0$ & $0$   \\ \hline
\end{longtable}
 
We can see that the graph is {\em \sc} in the sense that for any two vertices there is a path connecting them. This means that the ideal generated by any $e_i$ is the whole algebra.
In case the ground field has characteristic $3$ the graph is given in figure below,
%
 \begin{center}
\begin{tikzpicture}
\begin{scope}[every node/.style={thick}]
    \node (A) at (0,0) {$e_3$};
    \node (B) at (0,-1.5) {$e_1$};
    \node (C) at (1.5,-0.8) {$e_4$};
    \node (D) at (-1.5,-0.8) {$e_2$};
\end{scope}    
\begin{scope}[>={Stealth[black]}]
    \path [->] (A) edge[bend right=30] node[left] {} (B);
    \path [->] (B) edge[bend right=30] node[left] {} (A);
    \path [->] (C) edge node[left] {} (A);
    \path [->] (D) edge node[left] {} (A);
    \path [->] (D) edge node[left] {} (B);
    \end{scope}
\end{tikzpicture}

{\it Graph of $\Sa_2$ in case $\ch(\F)=3$.} 
\end{center}

\noindent which is not \sc. As we will see, strong connection is a necessary condition for simplicity. If $E$ is any graph with set of vertices $E^0$ and $S\subset E^0$, we will denote by $\tree(S)$ the set 
of all $v\in E^0$ such that there is a path from $S$ to $v$. 
We can also construct a map $\tree\colon E^0\to E^0$ such that $S\mapsto\tree(S)$. If $E$ is the graph of an $\F$-algebra $B$ relative to a basis ${\mathcal B}=\{b_i\}_i$, the fixed points of $\tree$ induce ideals of $B$: assume $\tree(S)=S$, then $\oplus_{u\in S}\F u$ is a right ideal of $B$ because for any $e_i\in\mathcal{B}$ and any $u\in S\subset\mathcal{B}$, one has $ue_i=0$ or $ue_i=\sum_{j\in J}x^j e_j$ (with $x^j\in\F^\times$) so that $u\ge e_j$ (for any $j\in J$) implying $e_j\in S$. Thus $(\oplus_{u\in S}\F u)B\subset \oplus_{u\in S}\F u$.
Similarly one can prove that $\oplus_{u\in S}\F u$ is a left ideal of $B$. So we claim:
\begin{Lem}
Let $E$ be the graph of an $\F$-algebra $B$ relative to a given basis $\mathcal{B}=(b_i)_i$. If $S$ is a fixed point of $\tree\colon E^0\to E^0$ then $\oplus_{u\in S}\F u$ is an ideal of $B$.
\end{Lem}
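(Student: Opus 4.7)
The plan is to check that $I := \bigoplus_{u \in S} \F u$ is stable under both left and right multiplication by $B$; since $I$ is a subspace and $B$ is spanned by $\mathcal{B}$, it is enough to test on basis vectors. I would fix $u \in S$ and an arbitrary $b_i \in \mathcal{B}$ and examine the expansions $u b_i = \sum_{j \in J} x^j b_j$ and $b_i u = \sum_{j \in K} y^j b_j$ in the basis, with $x^j, y^j \in \F^{\times}$ (taking $J$ or $K$ to be empty if the corresponding product vanishes, in which case there is nothing to show for that side).

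The key observation is that both the right-multiplication operator $R_{b_i}$ and the left-multiplication operator $L_{b_i}$ belong to $\M(B) \subset \M_1(B)$, so every basis vector appearing in the support of $R_{b_i}(u)$ or $L_{b_i}(u)$ is a descendant of $u$ in the graph $E$. Equivalently, $u \ge b_j$ for every $j \in J \cup K$, so each such $b_j$ lies in $\tree(\{u\}) \subset \tree(S)$, and the fixed-point hypothesis $\tree(S) = S$ forces $b_j \in S$. Hence $u b_i$ and $b_i u$ both lie in $I$, and extending by linearity over $u \in S$ and over $\mathcal{B}$ yields $I \cdot B \subset I$ and $B \cdot I \subset I$, so $I$ is a two-sided ideal of $B$.

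The only delicate step is the passage from a single expansion $R_{b_i}(u) = \sum_{j} x^j b_j$ to the individual relations $u \ge b_j$ for each $j \in J$; this uses the reading of the graph convention under which every basis vector occurring in the support of $T(u)$ for some $T \in \M_1(B)$ is recorded as a descendant of $u$, as was already invoked implicitly in the discussion preceding the lemma. Once this point is granted, the rest is purely combinatorial bookkeeping on $E$, and I anticipate no further obstacle.
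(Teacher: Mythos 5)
Your proof is correct and follows essentially the same route as the paper's: expand $u b_i$ (and $b_i u$) in the basis, note that left/right multiplication operators lie in $\M_1(B)$ so every basis vector in the support is a descendant of $u$, and invoke $\tree(S)=S$ to conclude the support stays in $S$. Your explicit flagging of the ``support'' reading of the relation $u\ge b_j$ is in fact a point the paper glosses over (its stated definition requires $b_j=T(u)$ exactly, while its proof uses the support convention just as you do), so your write-up is, if anything, slightly more careful on the one delicate step.
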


When $\ch(\F)=3$, considering the graph of $S_2$ in figure above, we see immediately that the unique fixed points of the map $\tree$ are 
the subsets of vertices $\emptyset$, $\{e_1,e_3\}$, $\{e_1,e_2,e_3\}$, $\{e_1,e_3,e_4\}$ and $E^0$. So at a first glace we detect three nontrivial proper ideals: $\F e_1\oplus\F e_3$, $\F e_1\oplus\F e_2\oplus \F e_3$ and $\F e_1\oplus\F e_3\oplus \F e_4$. 
If $I$ is the $3$-dimensional ideal generated by $e_1$, $e_3$ and $e_4$, we have 
$A=I\oplus\F e_2$ and $A/I$ is a zero-product algebra. Similarly if $J$ is the ideal generated by $\{e_1,e_2,e_3\}$ then $A=J\oplus\F e_4$ and 
$A/J$ is a zero product algebra.
With these ideas in mind, an easy criterium for simplicity is

\begin{Lem}\label{ohcum}
Let $A$ be any algebra with $A^2\ne 0$ and let $\M:=\M(A)$ be its multiplication algebra. Then $A$ is simple if and only if
\begin{enumerate}
    \item Its graph relative to a basis $(u_i)_{i\in I}$ is \sc, and 
    \item For any nonzero $x\in A$ there is some $u_i$ in $\M x$.
\end{enumerate}
\end{Lem}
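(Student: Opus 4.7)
The plan is to exploit one structural observation that makes both directions almost automatic: for any $x \in A$, the subspace $\M x$ is itself a two-sided ideal of $A$. This is immediate from $\M$ being a subalgebra of $\Endo_{\F}(A)$, so $L_a(\M x), R_a(\M x) \subset \M\cdot \M x \subset \M x$ for every $a\in A$. Combined with the preceding Lemma (fixed points of $\tree$ yield ideals), this will reduce both implications to routine verifications.

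For the forward direction, I would establish (2) first: given $0\ne x\in A$, the ideal $\M x$ must equal $A$ by simplicity, provided it is nonzero; and $\M x=0$ would force $Ax=xA=0$, making $\F x$ itself an ideal, hence $A=\F x$ by simplicity, which then contradicts $A^2\ne 0$ because $L_x(x)\in\M x=0$. Once $\M x=A$, every basis vector $u_i$ lies in $\M x$. For (1), I would fix a basis vector $u_i$ and observe that $\tree(\{u_i\})$ is a fixed point of $\tree$ (paths concatenate). The preceding Lemma then exhibits a nonzero ideal $\bigoplus_{v\in\tree(\{u_i\})} \F v$, which simplicity forces to equal $A$; thus $\tree(\{u_i\})=E^0$ for every $u_i$, i.e., the graph is strongly connected.

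For the converse, assume (1) and (2), and let $I$ be any nonzero ideal. Picking $0\ne x\in I$, condition (2) delivers a basis vector $u_i\in\M x$; since $I$ is closed under the operators in $\M$, we get $u_i\in I$. Then for any other basis vector $u_j$, condition (1) gives a path $u_i=v_0\to v_1\to\cdots\to v_n=u_j$ where each $v_{k+1}=T_k(v_k)$ for some $T_k\in \M_1(A)=\F\cdot 1+\M$; since $I$ is an ideal, a trivial induction along the path shows $u_j\in I$. Hence $I=A$ and $A$ is simple. The only real technicality throughout is dispatching the degenerate case $\M x=0$, which the hypothesis $A^2\ne 0$ is tailor-made to rule out; everything else is bookkeeping with the structural fact that $\M x$ is automatically an ideal.
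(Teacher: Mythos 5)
Your proof is correct and takes essentially the same approach the paper intends: the paper states this lemma without a detailed proof, presenting it as an immediate consequence of the two preceding observations (that $\M S$ is the ideal generated by $S\subset A$, and that fixed points of $\tree$ yield ideals), which is exactly the machinery you use. Your explicit dispatch of the degenerate case $\M x=0$ via the hypothesis $A^2\ne 0$ is a welcome detail the paper leaves implicit, and it is precisely the role that hypothesis plays.
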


For instance to check the simplicity of the four-dimensional algebra $\Sa_2$
whose multiplication table is given  above (in the case of characteristic other than $3$), since its graph is \sc\ we only need to realize that for a nonzero $x=\sum x_i e_i\in \Sa_2$ we have:
\begin{enumerate}
\item If $x_3\ne 0$, $(xe_2)e_2=3x_3 e_2$ hence $e_2\in\M(\Sa_2)x $.
\item If $x_3=0$, $x_1\ne 0$, $xe_2=-3x_1 e_2$, so $e_2\in\M(\Sa_2)x $.
\item If $x_3=x_1=0$ $x_4\ne 0$, $e_2x=x_4e_3$ implying $e_3\in\M(\Sa_2)x$.
\item If $x_i=0$ except for $i=2$ then $e_2\in\M(\Sa_2)x$.
\end{enumerate}
Thus, in any case there is a basis element in $\M(\Sa_2)x$.

For an algebra $A$, the condition that 
$\M(A)=\Endo_\F(A)$ implies simplicity of $A$:
indeed, if this coincidence happens, for any nonzero $x\in A$ and any $y\in A$, there is a linear map $f\colon A\to A$ such that $y=f(x)$. Since $f\in\M(A)$ then $y$ is in the ideal generated by $x$. Thus $A$ is simple. In \cite[Corollary of Theorem 3]{jac} it is proved that in the finite-dimensional case, an algebra $U$ over a field $\F$ is simple if and only if its multiplication algebra is simple.

\begin{Lem}\label{caip} 
Let $U$ be a finite-dimensional algebra over a field $\F$. If $U$ is simple then $\M(U)={\rm End}_\F(U)$.
\end{Lem}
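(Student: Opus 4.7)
The plan is to regard $U$ as a left module over its multiplication algebra $\M := \M(U)$ and combine the simplicity equivalence from \cite{jac} already cited above with the Wedderburn--Jacobson density machinery for simple finite-dimensional associative algebras.

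First, I would observe that a subspace $V \subseteq U$ is $\M$-stable if and only if $V$ is a two-sided ideal of $U$, since $\M$ is generated by the left and right multiplication operators $L_x$ and $R_x$. Combined with $U^2 \neq 0$, simplicity of $U$ then forces $U^2 = U$ and makes $U$ a faithful irreducible left $\M$-module. Next, the cited result of \cite{jac} says that $\M$ is a simple finite-dimensional associative $\F$-algebra, so by Wedderburn's structure theorem $\M \cong M_n(D)$ for some finite-dimensional $\F$-division algebra $D$. Schur's lemma identifies $D$ with $\Endo_\M(U)$, and the Jacobson density theorem, which collapses to an equality in the present finite-dimensional setting, yields $\M = \Endo_D(U)$.

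The remaining step, and the main obstacle, is to show $D = \F$, for only then does one get $\Endo_D(U) = \Endo_\F(U)$. This reduction needs an algebraic-closure assumption on $\F$ that is implicit in the running conventions of the paper; without it the statement can fail, as witnessed by $U = \mathbb{C}$ viewed as an $\mathbb{R}$-algebra, where the Schur commutant is $\mathbb{C}$. Assuming $\F$ algebraically closed, every finite-dimensional $\F$-division algebra coincides with $\F$, so $D = \F$ and the chain $\M(U) = \Endo_D(U) = \Endo_\F(U)$ completes the argument.
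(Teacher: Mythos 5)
Your argument is correct, and over an algebraically closed field it is essentially the paper's own proof: both treat $U$ as a faithful irreducible module over $\M:=\M(U)$, quote \cite{jac} to get simplicity of $\M$, and then use Wedderburn--Schur--density to conclude $\M=\Endo_D(U)$ with $D=\F$. Where you diverge is the non-algebraically-closed case. The paper does \emph{not} intend an algebraic-closure hypothesis (its theorems are stated for arbitrary fields), so your remark that such a hypothesis is ``implicit in the running conventions'' is inaccurate; instead, the paper's second paragraph passes to the algebraic closure $\Omega$, invokes \cite[(2.5) Lemma]{finston} to identify $\M(U_\Omega)\cong\M(U)\otimes\Omega$, and compares dimensions. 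That argument, however, silently assumes $\M(U_\Omega)=\Endo_\Omega(U_\Omega)$, i.e.\ that $U_\Omega=U\otimes_\F\Omega$ is still simple --- and your counterexample is exactly what refutes this: for $U=\mathbb{C}$ over $\F=\mathbb{R}$ one has $U_\Omega\cong\mathbb{C}\times\mathbb{C}$, which is not simple, and indeed $\M(\mathbb{C})\cong\mathbb{C}$ has $\mathbb{R}$-dimension $2$ while $\Endo_{\mathbb{R}}(\mathbb{C})$ has dimension $4$. So the lemma as stated is false, your caution is vindicated, and the flaw lies in the paper's proof, not yours: simplicity is not preserved under scalar extension unless the centroid is $\F$.

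The correct general statement is the one the paper itself establishes immediately afterwards: for simple $U$ the algebra $\M$ embeds densely in $\Endo_\Gamma(U)$, where $\Gamma$ is the centroid, so in finite dimension $\M(U)=\Endo_\F(U)$ holds precisely when $\Gamma=\F$ --- a condition that is automatic over algebraically closed fields, by your division-algebra argument, but fails for $\mathbb{C}/\mathbb{R}$. The lemma should therefore carry either an algebraic-closure hypothesis or the hypothesis that $U$ is central (centroid equal to $\F$). Note that the damage to the rest of the paper is minor: in Proposition \ref{atunrojo} and in the simplicity theorems for $\Sa_2$, $W_2$ and $W(2)$, only the elementary direction ($\M=\Endo_\F(U)$ implies simplicity) is ever used, and that direction is unaffected.
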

\begin{proof}
Assume first that the ground field $\F$ is algebraically closed. If $U$ is simple, by \cite{jac} we know that $\M:=\M(U)$ is simple. Let $n:=\dim(U)$, since $U$ is an $\M$-module (irreducible and faithful) then $\M\cong \Endo_\F(U)$. If the ground field $\F$ is not algebraically closed we consider the algebraic closure $\Omega$ of $\F$ and the $\Omega$-algebra $U_\Omega:=U\otimes_\F\Omega$. Then 
$\M(U_\Omega)=\Endo_\Omega(U_\Omega)$ and by \cite[(2.5)Lemma]{finston} we have $\M(U_\Omega)\cong\M(U)\otimes\Omega$. 
Since $\dim(\Endo_\F(U))=\dim_\Omega(\Endo_\Omega(U_\Omega))=\dim_\Omega(\M(U_\Omega))=\dim(\M(U))$ we conclude
$\Endo_\F(U)=\M(U)$.
\end{proof}

If $U$ is simple but fails to be finite-dimensional we can say that $\M(U)$ is a dense subalgebra of $\Endo_\F(U)$ in the sense of Jacobson density. To clarify this, the $\M$-module $U$ is simple (or irreducible in the terminology of \cite{Jacobson}). Since the action of $\M$ on $U$ is the natural one, we can say that $U$ is an irreducible and faithful $\M$-module. Hence $\M$ is a primitive $\F$-algebra. The irreducibility of $U$ as an $\M$-module implies that the $\F$-algebra 
$\Gamma:=\Endo_\M(U)$ is a division algebra. 
This consists of all $\F$-linear maps $T\colon U\to U$ such that 
$T(xy)=xT(y)=T(x)y$ for any $x,y\in U$. So $\Gamma$ is the centroid of $U$ which is known to be a field (extesion of $\F$) given the simplicity of $U$. Now $U$ is an $\Gamma$-algebra in a natural way
and we have a monomorphism $\M\hookrightarrow\Endo_\Gamma(U)$ which is dense in the sense that for any $\Gamma$-linearly independent $x_1,\ldots, x_n\in U$ and arbitrary $y_1,\ldots, y_n\in U$, there is some $T\in\M$ such that $T(x_i)=y_i$ for $i=1,\ldots n$ (see \cite[Density Theorem for Irreducible Modules, II, \S 2, p.28]{Jacobson}). Observe that when $U$ is finite-dimensional the extension field $\Gamma$ has $(\Gamma:\F)$ finite. Thus if $\F$ is algebraically closed we have $\Gamma=\F$ and $\M$ being dense in $\Endo_\Gamma(U)=\Endo_\F(U)$ gives $\M=\Endo_\F(U)$. If $\F$ is not algebraically closed we can argue as in the last part of the proof
of Lemma \ref{caip}. So we recover Lemma \ref{caip} from the general 
result:

\begin{proposition}
If $U$ is a simple $\F$-algebra then $\M:=\M(U)$ is a primitive algebra, more precisely there is:
\begin{enumerate}
\item A monomorphism $\M\hookrightarrow {\rm End}_\Gamma(U)$ where $\Gamma$ is the centroid of $U$ $($which is a field extension of $\F$$)$.
\item For any collection of $\Gamma$-linearly independent elements 
$x_1,\ldots,x_n\in U$ and any collection $y_1,\ldots,y_n\in U$, there is an element $T\in\M$ such that $T(x_i)=y_i$ for any $i$.
\end{enumerate}
\end{proposition}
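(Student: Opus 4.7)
The plan is to carry out explicitly the sketch already given in the paragraph preceding the proposition: transfer simplicity of $U$ as an $\F$-algebra to irreducibility of $U$ as an $\M$-module, then invoke Schur's lemma and the Jacobson density theorem already cited from \cite{Jacobson}.

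First I would identify the $\M$-submodules of $U$ with the two-sided ideals of $U$. A subspace $V \subseteq U$ is $\M$-stable if and only if $L_a V \subseteq V$ and $R_a V \subseteq V$ for every $a \in U$, which is exactly the condition for $V$ to be an ideal. Hence simplicity of $U$ (which in particular requires $U^2 \neq 0$, so that $\M U$ is a nonzero ideal and therefore equal to $U$) translates into $U$ being a nontrivial irreducible $\M$-module. Since $\M$ is by construction a subalgebra of $\Endo_\F(U)$, this module is automatically faithful, so $\M$ is a primitive $\F$-algebra.

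Next I would apply Schur's lemma to get that $\Gamma := \Endo_\M(U)$ is a division ring over $\F$. Unwinding the commuting condition, $T \in \Gamma$ means $T(ax) = aT(x)$ and $T(xa) = T(x)a$ for all $a,x \in U$, so $\Gamma$ coincides with the centroid of $U$. To see that $\Gamma$ is actually a field, take $T_1, T_2 \in \Gamma$ and compute, for any $x,y \in U$,
\[
T_1 T_2(xy) \;=\; T_1\bigl(T_2(x)\,y\bigr) \;=\; T_2(x)\,T_1(y) \;=\; T_2\bigl(x\,T_1(y)\bigr) \;=\; T_2 T_1(xy),
\]
so $T_1 T_2$ and $T_2 T_1$ agree on $U^2 = U$, and $\Gamma$ is commutative. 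Thus $\Gamma$ is a field extension of $\F$ (identifying $\F$ with $\F \cdot \Id_U$), and $U$ becomes a $\Gamma$-vector space in the obvious way. By the very definition of $\Gamma$, every element of $\M$ commutes with every element of $\Gamma$, giving the desired monomorphism $\M \hookrightarrow \Endo_\Gamma(U)$ (injectivity is inherited from $\M \subseteq \Endo_\F(U)$), which establishes~(1).

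For~(2), since $U$ is an irreducible and faithful module over the primitive $\F$-algebra $\M$, with commuting ring the field $\Gamma$, part~(2) is precisely the conclusion of the Jacobson Density Theorem as recalled from \cite[II, \S 2, p.28]{Jacobson}, applied to $\Gamma$-linearly independent vectors $x_1,\dots,x_n$ and arbitrary targets $y_1,\dots,y_n$. The only real subtleties to be careful about are the degenerate case $U^2 = 0$ (which, as is standard, is excluded from the definition of "simple"), and the verification that $\Gamma$ is actually a field rather than just a division ring; the latter is the computation displayed above and constitutes the main technical point of the argument.
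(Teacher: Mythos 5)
Your proposal is correct and follows essentially the same route as the paper's own argument (given in the paragraph preceding the proposition): simplicity of $U$ is translated into $U$ being a faithful irreducible $\M$-module, so $\M$ is primitive; Schur's lemma identifies $\Gamma=\Endo_\M(U)$ with the centroid; and the Jacobson density theorem from \cite{Jacobson} yields item (2). The only difference is that you verify explicitly that the centroid is commutative (hence a field extension of $\F$), a point the paper merely cites as known, which is a filling-in of detail rather than a different approach.
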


As a consequence of Lemma \ref{caip}, for a finite-dimensional algebra $A$ over a field $\F$, proving that $\Endo_\F(A)$ agrees with $\M(A)$ is equivalent to proving that $A$ is simple. 
The characterization of the coincidence $\M(A)=\Endo_\F(A)$ in terms of the graph of $A$ is:

\begin{proposition}\label{atunrojo}
Let $A^2\ne 0$ be a finite-dimensional algebra and $\M=\M(A)$ its multiplication algebra. Then
$\M={\rm End}_{\F}(A)$ if and only if: \begin{enumerate}
    \item\label{atun1} The graph of $A$ relative to a basis $(u_i)_{i=1}^n$ is \sc.
    \item\label{atun2} For every $i\in \{1,\ldots,n\}$ there is some $j\in\{1,\ldots,n\}$ and $T\in\M$ such that $T(u_k)=\delta_{ik}u_j$ for any $k$. 
\end{enumerate}
\end{proposition}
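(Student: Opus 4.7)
The plan is to reduce the proposition to a direct application of Lemma \ref{ohcum} together with Lemma \ref{caip}, which tell us that for a finite-dimensional algebra with $A^2\ne 0$, the equality $\M(A)=\Endo_\F(A)$ is equivalent to simplicity of $A$, and simplicity in turn is equivalent to strong connectedness of the graph plus the existence, for every nonzero $x\in A$, of some basis vector $u_i$ lying in $\M x$. Thus the whole task is to identify condition (2) of the proposition as an equivalent reformulation of the second clause of Lemma \ref{ohcum} under the assumption that condition (1) already holds.

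For the forward implication, assuming $\M=\Endo_\F(A)$, I would argue as follows. The remarks preceding Lemma \ref{caip} already state that the coincidence $\M(A)=\Endo_\F(A)$ forces $A$ to be simple, so Lemma \ref{ohcum} immediately yields that the graph is strongly connected, i.e. (1). For (2), the point is simply that the coordinate projections on the fixed basis $(u_i)_{i=1}^n$ belong to $\Endo_\F(A)=\M$; given $i$, the operator $T$ defined on basic vectors by $T(u_k)=\delta_{ik}u_i$ lies in $\M$ and realizes (2) with $j=i$.

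For the converse, assume (1) and (2). Using Lemma \ref{caip}, it suffices to prove that $A$ is simple, and by Lemma \ref{ohcum} this reduces, once (1) is assumed, to producing a basis vector inside $\M x$ for every nonzero $x\in A$. Given such $x=\sum_{k=1}^n x_k u_k$, choose an index $i$ with $x_i\ne 0$. Condition (2) supplies $T_i\in\M$ and an index $j_i$ such that $T_i(u_k)=\delta_{ik}u_{j_i}$ for every $k$; applying $T_i$ to $x$ gives $T_i(x)=x_i u_{j_i}$. Since $\M x$ is an $\F$-linear subspace of $A$ and $x_i\in\F^\times$, we conclude $u_{j_i}\in \M x$, as desired.

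There is no real obstacle in this argument: the only point that requires a little care is to check that $\M x$ (as defined in the excerpt, as the linear span of the $T(x)$ with $T\in\M$) is closed under $\F$-scalar multiplication, which is immediate, and that condition (2) of the proposition — a priori a statement about how a single $T$ acts on \emph{all} basis vectors simultaneously — is exactly what is needed to extract a basis vector from an arbitrary nonzero element by evaluating $T$ at it. Both directions of the equivalence then become essentially formal consequences of the previous lemmas.
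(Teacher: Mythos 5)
Your proof is correct, but the converse direction takes a genuinely different route from the paper's. The paper proves the "if" part directly, without ever invoking simplicity: writing $\Endo_\F(A)=\oplus_{i,j=1}^n\F E_{ij}$ with $E_{ij}(u_k)=\delta_{ik}u_j$, condition (2) places some $E_{ij}$ in $\M$ for each $i$, and strong connectedness supplies, for each target index $k$, an operator $T\in\M$ with $T(u_j)=u_k$, whence $E_{ik}=TE_{ij}\in\M$; thus $\M$ contains every matrix unit and equals $\Endo_\F(A)$. You instead reduce to simplicity: conditions (1) and (2) imply the two clauses of Lemma \ref{ohcum} (your observation that $T_i(x)=x_iu_{j_i}$ extracts a basis vector from any nonzero $x$ is exactly the right bridge between the proposition's condition (2) and the lemma's second clause), so $A$ is simple, and then Lemma \ref{caip} gives $\M=\Endo_\F(A)$; the forward directions of the two proofs coincide. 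The trade-off: the paper's argument is elementary and self-contained, essentially two lines of operator composition, whereas yours imports the machinery behind Lemma \ref{caip} (Jacobson's theorem that simplicity of $U$ forces simplicity of $\M(U)$, plus the scalar-extension argument for fields that are not algebraically closed). On the other hand, your route makes explicit how the pointwise condition (2) upgrades to the "every nonzero element reaches a basis vector" criterion, which is conceptually illuminating and ties the proposition back to the simplicity criterion rather than to a matrix-unit computation; there is no circularity in doing so, since both cited lemmas precede the proposition and are independent of it.
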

\begin{proof} If $\M=\Endo_\F(A)$ the algebra $A$ is simple whence the graph is \sc. The other assertion in the statement is straightforward. So assume that both conditions in the statement hold. If we define the linear maps
$E_{ij}\colon A\to A$ such that $E_{ij}(u_k)=\delta_{ik}u_j$ we know that
$\Endo_\F(A)=\oplus_{i,j=1}^n\F E_{ij}$. Now, condition 2) says that for any $i$ there is some $E_{ij}\in\M$. But the graph relative to $(u_i)$ is \sc\ so
for any $u_j,u_k$ there exists $T\in M$ such that $u_k=T(u_j)$. 
Thus $TE_{ij}=E_{ik}$ and we have $E_{ik}\in\M$ for every $k$ and $i$. \end{proof}

\begin{Remark}\label{locomia}\rm
If the graph of an $\F$-algebra $A$ relative to a basis $(u_i)_{i=1}^n$ is strongly connected and some $E_{ij}\in\M=\M(A)$ (identifying $\M$ with an subalgebra of $\Endo_\F(A)$), then $E_{ik}\in\M$ for any $k$. Indeed: given $u_j$ and $u_k$ by the strong connectedness of the graph, there is some $T\in\M$ such that $T(u_j)=u_k$. Then $E_{ik}=TE_{ij}\in\M$.
\end{Remark}

\begin{Th}
If $\F$ is a field of characteristic othen than $3$ and $\Sa_2$ the four-dimensional algebra whose multiplication algebra is given above, we have $\M:=\M(\Sa_2)={\rm End}_\F(\Sa_2)$. Consequently $\Sa_2$ is simple. If the characteristic of $\F$ is $3$ there is a $3$-dimensional ideal $I$ which is the subspace generated by $e_1$, $e_3$ and $e_4$. Moreover 
$\Sa_2=I\oplus\F e_2$ and $\Sa_2/I$ is a zero-product algebra.
In this case $\M$ has dimension $8$ and a $4$-dimensional radical $\rad(\M)$ such that $\rad(\M)=0$ and $\M/\rad(M)\cong M_2(\F)$.
\end{Th}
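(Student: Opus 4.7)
The plan is to split the theorem into the two characteristic cases. For $\ch(\F)\ne 3$, everything is essentially already in place: the discussion immediately after Lemma~\ref{ohcum} verified that the graph of $\Sa_2$ is \sc\ and that for every nonzero $x\in\Sa_2$ some basis vector lies in $\M x$, so Lemma~\ref{ohcum} gives simplicity and Lemma~\ref{caip} then yields $\M=\Endo_\F(\Sa_2)$ immediately.

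For $\ch(\F)=3$, I would first read off from the table that all coefficients $\pm 3$ collapse, so every nonzero product $e_ie_j$ lies in the $2$-dimensional subspace $J:=\F e_1\oplus\F e_3$; in particular a direct inspection shows that $I=\F e_1\oplus\F e_3\oplus\F e_4$ absorbs multiplication on both sides and is an ideal, while $\Sa_2=I\oplus\F e_2$ and $e_2 e_2=0$ yield the zero-product quotient. The key structural observation is then that every basic operator $L_{e_i}, R_{e_i}$ has image contained in $J$, hence so does every element of $\M$. Therefore $\M$ sits inside $N:=\{T\in\Endo_\F(\Sa_2):T(\Sa_2)\subseteq J\}$, which has dimension $4\cdot 2=8$, giving $\dim\M\le 8$.

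To obtain equality I would exhibit the eight ``matrix units'' $E_{ij}$ ($i\in\{1,2,3,4\}$, $j\in\{1,3\}$) inside $\M$ by an explicit but short calculation: $L_{e_1}=-E_{11}+E_{33}$ together with $L_{e_1}^2=E_{11}+E_{33}$ produces $E_{11}$ and $E_{33}$; the products $L_{e_3}=E_{13}-E_{21}$ and $L_{e_3}L_{e_1}=-E_{13}$ extract $E_{13}$ and $E_{21}$; from $R_{e_2}=-E_{31}$, $R_{e_4}=E_{23}$ and $L_{e_2}-R_{e_2}=E_{43}$ one gets three more; finally $L_{e_2}E_{43}=-E_{41}$ supplies the last. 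These eight operators are linearly independent, so $\M=N$ has dimension exactly~$8$.

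For the Wedderburn-type statement, consider the restriction $\pi:\M\to\Endo_\F(J)$, $T\mapsto T|_J$; this is an algebra homomorphism because every element of $\M$ factors through $J$. Its kernel $K$ consists of operators vanishing on $J$, i.e., determined by their values on $e_2,e_4$ in $J$, so $\dim K\le 4$, and for $T_1,T_2\in K$ one has $T_2(\Sa_2)\subseteq J$ while $T_1|_J=0$, forcing $T_1T_2=0$ and hence $K^2=0$. Surjectivity of $\pi$ is immediate from the restrictions of $E_{11},E_{13},E_{31},E_{33}\in\M$, so $\M/K\cong\Endo_\F(J)\cong M_2(\F)$; since $K$ is a square-zero ideal with semisimple quotient it is the Jacobson radical, whence $\dim\rad(\M)=4$, $\rad(\M)^2=0$ and $\M/\rad(\M)\cong M_2(\F)$ (the printed ``$\rad(\M)=0$'' appearing in the statement is evidently a slip for $\rad(\M)^2=0$). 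The main obstacle is purely computational and concentrated in the production of $E_{41}$: the char~$3$ graph is not \sc\ so Remark~\ref{locomia} does not apply, and one must chain products as above to reach $E_{41}$ from $E_{43}$.
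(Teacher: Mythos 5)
Your proof is correct, and in both characteristic regimes it takes a genuinely different route from the paper's. For $\ch(\F)\ne 3$ the paper never invokes Lemma \ref{caip}: it verifies condition \eqref{atun2} of Proposition \ref{atunrojo} by explicit operator computations (e.g.\ $E_{32}=\frac{1}{3}R_{e_2}^2$ when $\ch(\F)\ne 2,3$, and a separate argument via the tables of $R_{e_i}R_{e_j}$ and $L_{e_i}L_{e_j}$ in characteristic $2$), so it proves $\M=\Endo_\F(\Sa_2)$ first and obtains simplicity as a corollary; you run the implication backwards, deducing simplicity from Lemma \ref{ohcum} (whose hypotheses are indeed checked in the text for every $\ch(\F)\ne 3$, the four-case computation there remaining valid in characteristic $2$) and then $\M=\Endo_\F(\Sa_2)$ from Lemma \ref{caip}. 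Your version is shorter and avoids the split between characteristic $2$ and characteristic $\ne 2,3$, but it rests on the heavier machinery behind Lemma \ref{caip} (the results of \cite{jac,finston}), whereas the paper's computation is self-contained and constructive. In characteristic $3$, the paper asserts that $\M$ equals the span of the eight matrix units and then identifies $\rad(\M)$ with the radical $\M^\bot$ of the associative trace form $\esc{f,g}=\tr(fg)$, checking $(\M^\bot)^2=0$ and exhibiting an explicit complement isomorphic to $M_2(\F)$; you instead bound $\dim\M\le 8$ by the structural inclusion $\M\subseteq\{T\in\Endo_\F(\Sa_2)\colon T(\Sa_2)\subseteq J\}$ with $J=\Sa_2^2=\F e_1\oplus\F e_3$, supply the generation argument that the paper only asserts (your identities $L_{e_1}=-E_{11}+E_{33}$, $L_{e_1}^2=E_{11}+E_{33}$, $L_{e_3}L_{e_1}=-E_{13}$, $L_{e_2}E_{43}=-E_{41}$ are all correct, and the division by $2$ is harmless in characteristic $3$), and replace the trace-form computation by the restriction homomorphism $\M\to\Endo_\F(J)$, whose kernel $\F E_{21}\oplus\F E_{23}\oplus\F E_{41}\oplus\F E_{43}$ is visibly square-zero and whose image is all of $\Endo_\F(J)\cong M_2(\F)$. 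That is arguably cleaner and more conceptual, and it yields $\dim\M=8$ as a proved statement rather than an assertion; both your argument and the paper's proof confirm that the ``$\rad(\M)=0$'' printed in the statement is a misprint for $\rad(\M)^2=0$.
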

\begin{proof} Since the graph relative to the basis $(e_i)_{i=1}^4$ is \sc\ we need to check \eqref{atun2} in Proposition \ref{atunrojo}. 

\begin{enumerate}
    \item[(A)] 
First, we will consider  
the case in which the characteristic of $\F$ is other than $2$ or $3$.
Under this hypothesis, the element $R_{e_2}^2\in \M$ acts in the way 
\begin{longtable}{ll}
$R_{e_2}^2(e_1)=(e_1e_2)e_2=-3e_2^2=0,$ &$ R_{e_2}^2(e_2)=(e_2e_2)e_2=0,$\\
$R_{e_2}^2(e_3)=(e_3e_2)e_2=-e_1e_2=3 e_2,$ & $R_{e_2}^2(e_4)=(e_4e_2)e_2=0.$
\end{longtable}
Thus $E_{32}=\frac{1}{3}R_{e_2}^2\in\M$ and we can also prove that $E_{3k}\in\M$ for any $k$:
\begin{longtable}{lll}
$E_{31}=\frac{1}{2}R_{e_3}E_{32}\in\M,$ & 
$E_{33}=R_{e_4}E_{32}\in\M,$ & $E_{34}=-\frac{1}{3}R_{e_3}E_{33}\in\M.$
\end{longtable}
So far $E_{3k}\in\M$ for any $k$. Furthermore, it can be checked that 
\begin{longtable}{ll}
$R_{e_1}=-E_{11}+3 E_{22}-2 E_{33},$ & 
$L_{e_1}=-E_{11}-3 E_{22}+ E_{33}+3E_{44}.$
\end{longtable}
On the other hand we have:

\begin{longtable}{rcll}
$E_{21}$ & $=$ & $-R_{e_2}R_{e_4}\in\M,$ &\\ 
$R_{e_2}$ & $=$ & $-3 E_{12}-E_{31},$ & $\hbox{ hence } E_{12}\in\M$\\ 
$E_{12}, E_{21}$&$\in$ & $\M,$ &  $\hbox{ hence } E_{11}, E_{22}\in\M$\\
$L_{e_1}$ & $=$ & $-E_{11}-3 E_{22}+ E_{33}+3E_{44},$ & $\hbox{ hence } E_{44}\in\M.$
\end{longtable}
Summarizing $E_{ii}\in\M$ for $i=1,2,3,4$. Thus 
\eqref{atun2} of Proposition \ref{atunrojo} is satisfied.

\remove{Now we need to prove that for any $i\in\{1,2,4\}$ there is some $j$ such that $E_{ij}\in\M$. 
For $i=2$, consider $\M\ni T:=R_{e_2}R_{e_4}$. One can see that 
$T(e_2)=-e_1$ and $T(e_k)=0$ for $k\ne 2$. Consequently $E_{21}\in\M$ and
therefore $E_{2k}\in\M$ for every $k$.
For $i=1$, take $S:=R_{e_4}L_{e_3}L_{e_2}^2L_{e_3}$. Then
$$S(e_1)=36 e_4,\quad S(e_k)=0 \text{ for } k\ne 1.$$
Thus, $E_{14}\in\M$ and again we have $E_{1k}\in\M$ for any $k$. Finally, for $i=4$
take $\M\ni R:=L_{e_3}L_{e_2}$. We have 
$$R(e_1)=-3e_1, \quad R(e_2)=0, \quad R(e_3)=-4e_3,\quad R(e_4)=-3 e_4.$$
So $-3E_{11}-4E_{33}-3E_{44}\in\M$ and since $E_{1k}, E_{3k}\in\M$ for any $k$ we conclude $E_{44}\in\M$.} 

    \item[(B)] 
Second,  we analyze the case in which $\F$ has characteristic $2$. The multiplication table of $\Sa_2$ adopts the form in figure:
 
\begin{center}
\begin{longtable}{|c|c|c|c|c|c|c|c|c|}
\hline
      & $e_1$ & $e_2$ & $e_3$ & $e_4$ \\ \hline
$e_1$ & $e_1$ & $e_2$ & $e_3$ & $e_4$ \\ \hline
$e_2$ & $e_2$ & $0$ & $0$ & $e_3$  \\ \hline
$e_3$ & $0$ & $e_1$ & $e_4$ & $0$   \\ \hline
$e_4$ & $0$ & $0$ & $0$ & $0$\\ \hline
\end{longtable}

\it{Multiplication table of $\Sa_2$ when $\hbox{char}(\F)=2.$}

 \end{center}
\remove{Then $\M\ni R_{e_2}^2=E_{32}$ and $\M\ni R_{e_2}R_{e_4}=E_{24}$.
Next, $R_{e_3}=E_{13}+E_{34}$ so $\M\ni R_{e_3}R_{e_2}^2=(E_{13}+E_{34})E_{32}=E_{12}$.
Finally we prove that $E_{43}\in \M$: 
$$L_{e_2}(e_1)=e_2,\quad L_{e_2}(e_2)=0,\quad L_{e_2}(e_3)=0, L_{e_2}(e_4)=e_3.$$
So $\M\ni L_{e_2}=E_{22}+E_{43}$ hence $E_{43}\in\M$ which implies
$E_{4k}\in\M$ for every $k$. This proves that $\M=\Endo_\Phi(A)$ also in
characteristic two.}

Then the matrix whose $(i,j)$ entry is $R_{e_i}R_{e_j}$ is shown in
 \begin{equation*}
(R_{e_i}R_{e_j})_{i,j=1}^4= 
\left(
\begin{array}{cccc}
 E_{11}+E_{22} & E_{12}+E_{31} & 0 & 0 \\
 E_{12} & E_{32} & E_{11} & E_{21} \\
 E_{13} & E_{33} & E_{14} & E_{24} \\
 E_{14}+E_{23} & E_{13}+E_{34} & 0 & 0 \\
\end{array}
\right),
\end{equation*}
hence $E_{1i}\in\M$ for any $i$. From this, $E_{2i}\in\M$ also for any $i$. Consequently $E_{3i}\in\M$ for any $i$. 
The matrix whose $(i,j)$ entry is $L_{e_i}L_{e_j}$ is 
\begin{equation*}(L_{e_i}L_{e_j})_{i,j=1}^4= \left(
\begin{array}{cccc}
 E_{11}+E_{22}+E_{33}+E_{44} &
 E_{12}+E_{43} & E_{21}+E_{34} & 0 \\
 E_{12}+E_{43} & 0 & E_{22}+E_{33} & 0
   \\
 E_{21}+E_{34} & E_{11}+E_{44} & 0 & 0
   \\
 0 & 0 & 0 & 0 \\
\end{array}
\right),\end{equation*}
which implies $E_{43},E_{44}\in\M$.
$E_{41},E_{42}\in\M$ follows from the multiplication table.

    \item[(C)] 
Third, assume that $\text{char}(\F)=3$. Denoting as before by
$E_{ij}$ the basis of $\Endo_\F(A)$ such that $E_{ij}(e_k)=\delta_{ik}e_j$, we have 
\begin{longtable}{lll}
$L_{e_1}=R_{e_1}=-E_{11}+E_{33},$ & 
$L_{e_2}=-E_{31}+E_{43},$ & 
$R_{e_2}=-E_{31},$\\ 
$L_{e_3}=R_{e_3}=E_{13}-E_{21},$ & 
$R_{e_4}=E_{23}.$
\end{longtable}
The subalgebra of ${\rm End}_\F(A)$ generated by these operators is  $\M$ and coincides with the $\F$-linear span of $\{E_{11}, E_{33}, E_{31},  E_{13}, E_{21}, E_{23}, E_{41}, E_{43}\}$. If we compute the radical of the symmetric bilinear form $\esc{\cdot,\cdot }\colon \M\times\M\to\F$ given by $\esc{f,g}:=\hbox{trace}(f g)$
we find that $$\M^\bot:=\rad(\esc{\cdot,\cdot})=
\F E_{43}\oplus \F E_{21}\oplus \F E_{23}\oplus \F E_{41}.$$
For the reader's convenience we recall that the radical of a bilinear symmetric form  in a vector space is the subspace of elements which are orthogonal to the whole space. For a finite-dimensional associative algebra, of endormophisms, the bilinear form $\esc{f,g}:=\hbox{trace}(fg)$ is associative in the sense that $\esc{fg,h}=\esc{f,gh}$. Then, it is easy to realize that $\M^\bot$ is an ideal of the algebra. Now, one can see that \begin{center}$(\M^\bot)^2=0$ and 
$\M/\M^\bot\cong \F E_{11}\oplus \F E_{13}\oplus\F E_{31}\oplus \F E_{33}\cong M_2(\F)$.
\end{center} Thus $\M^\bot$ is a maximal ideal and since it is nilpotent we conclude that $\M^\bot$ is the radical of $\M$.

\end{enumerate}\end{proof}

\section{Automorphisms and multiplication algebras}

\subsection{Conservative algebra $S_2$}
In this subsection we compute the automorphism group scheme of $\Sa_2$ over a field $\F$ of arbitrary characteristic. 
In order to do that we consider an associative, commutative and unital ring $R$ and define in the free $R$-module $B$ with basis $\{e_i\}_{i=1}^4$ the product as in table of  multiplication of $S_2$ (extended by $R$-bilinearity to the whole $B$). When $R=\F$ the algebra $B$ is precisely $\Sa_2$. So by considering $B$ we are thinking about $A$ defined over an arbitrary ring $R$ (associative, commutative and unital). If we are able to determine $\aut_R(B)$, then we have walked a long way towards the knowledge of the affine group scheme of $A$ over arbitrary fields. So consider $f\in\aut(B)$, and write $f(e_i)=f_i
^j e_j$ (using Einstein Criterium). Then $f(e_1^2)=f(e_1)^2$ hence $$-f(e_1)=(f_1^je_j)^2=-(f_1^1)^2e_1-3(f_1^3)^2e_4-3f_1^1f_1^2e_2+f_1^1f_1^3e_3+3f_1^1f_1^4e_4+$$ 
$$3f_1^2f_1^1e_2+2f_1^2f_1^3e_1+f_1^2f_1^4 e_3-2f_1^3f_1^1e_3-f_1^3f_1^2e_1=$$
$$[-(f_1^1)^2+f_1^2f_1^3]e_1+[f_1^2f_1^4-f_1^1f_1^3]e_3+[-3(f_1^3)^2+
3f_1^1f_1^4]e_4,$$
so we deduce 
$$\begin{cases}
f_1^1=(f_1^1)^2
, & f_1^2=0\cr 
f_1^3=f_1^1f_1^3
, & f_1^4=3(f_1^3)^2-3f_1^1f_1^4
\end{cases}$$
 Furthermore since $Re_4$ is the left annihilator of $B$ (and this is preserved under automorphism) we have $f(e_4)=f_4^4 e_4$ which implies $f_4^4\in R^\times$ (invertible elements of $R$). Now applying $f$ to $e_1e_4=3e_4$ we get 
$(f_1^1e_1+f_1^3e_3+f_1^4e_4)e_4=3e_4$, that is, $3f_1^1e_4=3e_4$. Assume now that $\tor_3(R)=0$, then $f_1^1=1$ so that $4f_1^4=3(f_1^3)^2$. 


Then we discuss cases:
\begin{enumerate}
    \item[(A)] $\frac{1}{3},\frac{1}{2}\in R$. Then $f_1^4=\frac{3}{4}(f_1^3)^2$ (besides $f_1^1=1, f_1^2=0$). Since $e_3e_4=0$ we have $f(e_3)e_4=0$ hence $f_3^1e_1e_4+f_3^2e_2e_4=0$. So $3f_3^1e_4+f_3^2 e_3=0$ implying $f_3^1=f_3^2=0$. So far the matrix of $f$ relative to the $R$-basis $\{e_i\}$ is 
    \begin{equation} \label{pattern}
        \tiny \begin{pmatrix}1 & 0 & * & *\cr 
    f_2^1 & f_2^2 & f_2^3 & f_2^4\cr 
    0 & 0 & f_3^3 & *\cr 
    0 & 0 & 0 & f_4^4\end{pmatrix}
    \end{equation}
whose determinant is $f_2^2f_3^3f_4^4$ and must be in $R^\times$. Hence $f_i^i\in R^\times$ for any $i$. Also 
$e_3^2=-3e_4$ and applying $f$ we have 
$(f_3^3e_3+f_3^4e_4)^2=-3 f_4^4 e_4$. Thus $-3(f_3^3)^2e_4=-3 f_4^4 e_4$ which gives $f_4^4=(f_3^3)^2$. Take now into account that $e_3e_2=-e_1$ hence 
\begin{center}
$(f_3^3 e_3+f_3^4e_4)(f_2^ie_i)=-(e_1+f_1^3 e_3+f_1^4 e_4)$
\end{center}or equivalently 
$f_3^3 e_3(f_2^ie_i)=-(e_1+f_1^3 e_3+f_1^4 e_4)$. So
$$-2f_3^3f_2^1 e_3-f_3^3f_2^2e_1-3f_3^3f_2^3 e_4=-e_1-f_1^3e_3-f_1^4 e_4$$
and we get $$f_2^2f_3^3=1,\quad f_1^3=2f_3^3f_2^1,\quad f_1^4=3f_3^3f_2^3.$$
Now $e_2^2=0$ hence $(f_2^ie_i)^2=0$. Thus $$-(f_2^1)^2e_1-3(f_2^3)^2e_4
-3f_2^1f_2^2 e_2+f_2^1f_2^3e_3+3f_2^1f_2^4e_4+$$
$$3f_2^2f_2^1e_2+2f_2^2f_2^3e_1+f_2^2f_2^4e_3-2f_2^3f_2^1e_3-f_2^3f_2^2e_1=0.$$ We get
$$-(f_2^1)^2+f_2^2f_2^3=0,\ f_2^2f_2^4-f_2^3f_2^1=0,\ -3(f_2^3)^2+3f_2^1f_2^4=0.$$
Also $e_1e_3=e_3$ so that 
$(e_1+f_1^3e_3+f_1^4 e_4)f_3^i e_i= f_3^i e_i$. Equivalently 
\begin{center}$(e_1+f_1^3e_3)(f_3^3 e_3+f_3^4 e_4)= f_3^3 e_3+f_3^4 e_4$.\end{center}
Then 
$f_3^3 e_3+3f_3^4 e_4-3f_1^3f_3^3e_4=f_3^3 e_3+f_3^4 e_4$ and we get $2f_3^4 -3f_1^3f_3^3=0$ so that $f_3^4=\frac{3}{2}f_1^3f_3^3$.
Thus if we put $f_2^2=\lambda$ and $f_1^3=\mu$ we have $f_3^3=\frac{1}{\lambda}$ and
\begin{longtable}{lll}
$f_1^4=\frac{3}{4}\mu^2,$ & 
$f_2^1=\frac{\lambda\mu}{2},$ & 
$ f_2^3=\frac{\lambda\mu^2}{4},$\\ 
$f_2^4= \frac{\lambda\mu^3}{8},$ &
$f_3^1=f_3^2=0,$ & 
$f_3^4=\frac{3\mu}{2\lambda}.$
\end{longtable}
Thus the matrix of $f$ in
the basis $\{e_i\}$ is 
\begin{equation}\label{tsrifeht}
\tiny   w(\lambda,\mu)=\left(
\begin{array}{cccc}
 1 & 0 & \mu  & \frac{3 \mu
   ^2}{4} \\
 \frac{\lambda  \mu }{2} &
   \lambda  & \frac{\lambda 
   \mu ^2}{4} & \frac{\lambda 
   \mu ^3}{8} \\
 0 & 0 & \frac{1}{\lambda } &
   \frac{3 \mu }{2 \lambda }
   \\
 0 & 0 & 0 & \frac{1}{\lambda
   ^2} \\
\end{array}
\right),\quad \lambda\in R^\times, \mu\in R,
\end{equation}
and we can check that any $f$ whose matrix is $w(\lambda,\mu)$ is an automorphism of $B$. Also the formula $w(\lambda,\mu)w(\lambda',\mu')=w(\lambda\lambda',\mu'+\mu/\lambda')$ gives that $$\aut(B)=\{w(\lambda,\mu)\colon\lambda\in R^\times,\mu\in R\}\cong \begin{pmatrix}1 & R\cr 0 & R^\times\end{pmatrix}$$ being the isomorphism $w(\lambda,\mu)\mapsto\tiny\begin{pmatrix}1 & \mu \cr 0 & \lambda^{-1}\end{pmatrix}$. So $\aut(B)$ is isomorphic to the  
group $\Aff_2(R)$ of all invertible affine transformations of the affine plane
${\mathbb A}_2(R)$. In \cite{kayvo} it is proved this result in the particular case that $R$ is a field $\F$ of characteristic zero.

\item[(B)] $\frac{1}{3}\in R$ but $2R=0$. We have $f_1^1=1$, $f_1^2=0$ and $(f_1^3)^2=0$. Recall also that $f_4
 ^i=0$ for $i\ne 4$. Taking into account the multiplication table, 
 which is the same that the multiplication of $S_2$ when $\hbox{char}(\F)=2$, we deduce (from $e_3e_4=0$) that $f_3^1 e_4+f_3^2 e_3=0$ so $f_3^1=f_3^2=0$. Consequently we have the same pattern \eqref{pattern} for the matrix of the automorphism. So $f_2^2$, $f_3^3$, $f_4^4$ are invertible. Then we deduce 
$f_4^4=(f_3^3)^2$ as in the previous case. Also following the argument in the previous case we get 
$$f_2^2f_3^3=1,\ f_1^3=0,\ f_1^4=f_3^3f_2^3.$$
Now, from $e_2^2=0$ we get:
$$(f_2^1)^2+f_2^2f_2^3=0,\ f_2^2f_2^4-f_2^3f_2^1=0,\ (f_2^3)^2+f_2^1f_2^4=0.$$
If we put $\lambda=f_2^2$, $\mu=f_2^1$ we have 
$f_2^3=(f_2^1)^2/f_2^2=\mu^2/\lambda$ implying $f_1^4=\mu^2/\lambda^2$. Then $f_3^3=1/\lambda$. On the other hand, since $e_2e_4=e_3$ we have 
$f_2^ie_i f_4^4 e_4=f_3^3e_3+f_3^4 e_4$ which gives 
$f_2^1f_4^4 e_4+f_2^2f_4^4e_3=f_3^3e_3+f_3^4 e_4$, that is, $f_2^1f_4^4=f_3^4$ and $f_2^2f_4^4=f_3^3$.
So $f_3^4=\mu/\lambda^2$ and $f_4^4=1/\lambda^2$. The matrix of $f$ is 
\begin{equation}\label{dnoceseht}
w_2(\lambda,\mu)=\tiny\left(
\begin{array}{cccc}
 1 & 0 & 0 & \frac{\mu ^2}{\lambda ^2} \\
 \mu  & \lambda  & \frac{\mu ^2}{\lambda } & \frac{\mu ^3}{\lambda
   ^2} \\
 0 & 0 & \frac{1}{\lambda } & \frac{\mu }{\lambda ^2} \\
 0 & 0 & 0 & \frac{1}{\lambda ^2} \\
\end{array}
\right),\quad \lambda\in R^\times, \mu\in R,
\end{equation} 
and reciprocally: if the matrix of $f$ relative to the basis $\{e_i\}$ is as above, then $f$ is an automorphism of $B$. We also have the formula $w_2(\lambda, \mu) w(\lambda', \mu')=w(\lambda\lambda', \mu + \lambda\mu')$
which gives an isomorphism 
$$\aut(B)=\{w_2(\lambda,\mu)\colon\lambda\in R^\times, \mu\in R\}\cong\begin{pmatrix} 1 & R \cr 0 & R^\times\end{pmatrix}$$
being the isomorphism the given by $w_2(\lambda,\mu)\mapsto\tiny\begin{pmatrix}1 & \mu \cr 0 & \lambda\end{pmatrix}$. So again $\aut(B)$ is isomorphic 
 to the   group $\Aff_2(R)$ of all invertible affine transformations of the affine plane ${\mathbb A}_2(R)$.

\item[(C)] $3R=0$. As in previous cases $f(e_4)=f_4^4 e_4$ so $f_4^i=0$ except for $i=4$. Imposing the condition $f(e_1)^2=f(e_1)^2$ we get $f_1^2=f_1^4=0$. Imposing $f(e_1e_3)=f(e_1)f(e_3)$ we get 
$f_3^2=f_3^4=0$ and from $f(e_2e_4)=f(e_2)f(e_4)$ we get $f_3^1=0$ and $f_3^3=f_2^2f_4^4$.
Taking into account these values, from $f(e_3e_1)=f(e_3)f(e_1)$ we get $f_1^1=1$. Aplying now $f(e_3e_2)=f(e_3)f(e_2)$ gives  $f_1^3=-f_2^1f_3^3$ and $f_4^4=1/(f_2^2)^2$. Using again $f(e_1e_2)=f(e_1)f(e_2)$ we get 
$f_2^3=(f_2^1)^2/f_2^2$.
Finally, equation $f(e_2)^2=0$ gives $f_2^4=f_2^1f_2^3/f_2^2=(f_2^1)^3/(f_2^2)^2$. 
If we do $\lambda=f_2^2$, $\mu=f_2^1$ we have the matrix \begin{equation}\label{drihteht}\tiny
w_3(\lambda,\mu)=
\begin{pmatrix}
1 & 0 & -\frac{\mu}{\lambda} & 0\cr 
\mu & \lambda & \frac{\mu^2}{\lambda} & \frac{\mu^3}{\lambda^2} \cr
0 & 0 & \frac{1}{\lambda} & 0\cr 
0 & 0 & 0 & \frac{1}{\lambda^2}
\end{pmatrix}
\end{equation}
so that the equality 
$w_3(\lambda,\mu)w_3(\lambda',\mu')=w_3(\lambda\lambda',\mu+\lambda\mu')$ holds. Then $\aut(B)=\{w_3(\lambda,\mu)\colon\lambda\in R^\times,\mu\in R\}\cong\tiny \begin{pmatrix} 1 & R\cr 0 & R^\times\end{pmatrix}$, the isomorphism being 
$w_3(\lambda,\mu)\mapsto \tiny\left(
\begin{array}{cc}
 1 & \mu \\
 0  & \lambda  \\
\end{array}
\right).$
 So again $\aut(B)$ is isomorphic 
 to the   group $\Aff_2(R)$ of all invertible affine transformations of the affine plane ${\mathbb A}_2(R)$.\end{enumerate} 
Thus we claim:
\begin{proposition}\label{reima}
Let $R$ be a commutative associative unital ring and $B$ the free $R$-module with basis $\{e_i\}_{i=1}^4$ endowed with an $R$-algebra structure whose multiplication algebra is 
that of table of multiplication of $S_2.$ Then
if $\frac{1}{2},\frac{1}{3}\in R$; or $\frac{1}{3}\in R$, $2R=0$; or $3R=0$, we have $\aut(B)\cong\Aff_2(R)$ the affine group of $\mathbb{A}_2(R)$. The precise description of $\aut(B)$ is given in formulae \eqref{tsrifeht},\eqref{dnoceseht} and \eqref{drihteht}. \end{proposition}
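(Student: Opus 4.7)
The plan is to prove Proposition \ref{reima} by a direct case-by-case analysis following exactly the computation already presented in parts (A), (B), (C) of the discussion preceding the statement. Given $f\in\aut(B)$, I write $f(e_i)=f_i^j e_j$ and impose the $16$ conditions $f(e_ie_j)=f(e_i)f(e_j)$ using the multiplication table of $\Sa_2$. The key initial observation, valid in all cases, is that $Re_4$ coincides with the left annihilator of $B$ and must therefore be preserved by $f$; this forces $f(e_4)=f_4^4e_4$ with $f_4^4\in R^\times$. Next, expanding $f(e_1)^2=-f(e_1)$ yields the first batch of equations for $f_1^j$, and evaluating $f(e_1e_4)=3f(e_4)$ produces $3f_1^1=3$, so $f_1^1=1$ whenever $\tor_3(R)=0$ (which covers the first two cases).

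First I would organize the three cases by the arithmetic of $R$. In case (A), $\frac{1}{2},\frac{1}{3}\in R$, the exact chain of equations established in the text derives the upper-triangular pattern \eqref{pattern}, then uses $e_3^2=-3e_4$, $e_3e_2=-e_1$, $e_2^2=0$ and $e_1e_3=e_3$ to solve successively for every $f_i^j$ in terms of two free parameters $\lambda=f_2^2\in R^\times$ and $\mu=f_1^3\in R$, producing the matrix $w(\lambda,\mu)$ of \eqref{tsrifeht}. In case (B), $\frac{1}{3}\in R$ but $2R=0$, the multiplication table changes (see the characteristic $2$ table), the analogous chain of constraints yields the matrix $w_2(\lambda,\mu)$ of \eqref{dnoceseht}. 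In case (C), $3R=0$, one uses instead the products $e_1e_3$, $e_2e_4$, $e_3e_2$, $e_1e_2$ and $e_2^2=0$, to derive the matrix $w_3(\lambda,\mu)$ of \eqref{drihteht}.

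Second, for each case I would verify the converse: any endomorphism of $B$ whose matrix in the basis $\{e_i\}$ takes one of the forms \eqref{tsrifeht}, \eqref{dnoceseht}, \eqref{drihteht} is actually an automorphism. This is a routine check of the $16$ product relations for the generic matrix $w_k(\lambda,\mu)$ and only needs to be done once per case. Invertibility follows from the fact that $\det w_k(\lambda,\mu)\in R^\times$ (a unit multiple of a power of $\lambda$).

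Finally, to obtain the claimed isomorphism with $\Aff_2(R)$ I would compute, in each case separately, the product $w_k(\lambda,\mu)w_k(\lambda',\mu')$ and verify that it equals $w_k$ of the parameters displayed in the corresponding equation (for instance $w(\lambda\lambda',\mu'+\mu/\lambda')$ in case (A)). From this composition law the map $w_k(\lambda,\mu)\mapsto\bigl(\begin{smallmatrix}1 & \mu\\0 & \lambda^{\pm 1}\end{smallmatrix}\bigr)$ is immediately seen to be a group isomorphism onto the subgroup of $GL_2(R)$ of upper triangular matrices with a $1$ in the top-left corner, which is precisely $\Aff_2(R)$. The main obstacle is essentially bookkeeping: the three characteristic regimes force different identifications among the parameters (e.g.\ $f_1^3$ disappears in case (B) because $4f_1^4=3(f_1^3)^2$ degenerates, whereas in case (C) the loss of $f_1^1=1$ from the $e_1e_4$ relation must be recovered from a different product). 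Once the right order of equations is fixed in each case, the derivation is mechanical.
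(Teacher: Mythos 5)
Your proposal is correct and follows essentially the same route as the paper: the paper's own proof of Proposition \ref{reima} is precisely the case analysis (A), (B), (C) preceding the statement, which pins down $f(e_4)$ via the left annihilator $Re_4$, derives $f_1^1=1$ from $e_1e_4=3e_4$ (when $\tor_3(R)=0$), solves the product relations case by case to obtain $w(\lambda,\mu)$, $w_2(\lambda,\mu)$, $w_3(\lambda,\mu)$, and then reads off the isomorphism with $\Aff_2(R)$ from the composition laws. The only differences are organizational (you state the converse verification and the $e_1e_4$ step slightly more explicitly), not mathematical.
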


Now fix an arbitrary field $\F$ and let $\Sa_2$ be the $\F$-algebra introduced in the table of muktiplication of $S_2$. We can describe the affine group scheme $\affaut(\Sa_2)$. 
Denote by $\alg_{\Fs}$  the category of associative, commutative and unital $\F$-algebras and by $\grp$ the category of groups. 
Then $\affaut(\Sa_2)$ is the group functor 
$\affaut(\Sa_2)\colon\alg_\Fs\to\grp$ such that $R\mapsto \aut((\Sa_2)_R)$ (where $(\Sa_2)_R:=\Sa_2\otimes_\Fs R$ is the scalar extension algebra).
If $\ch(\F)\ne 2,3$ then $\frac{1}{2},\frac{1}{3}\in R$ for any $R\in\alg_\Fs$. If $\ch(\F)=2$ then $2R=0$ for any $R\in\alg_\Fs$ but $\frac{1}{3}\in R$. Finally, if $\ch(\F)=3$ then $3R=0$. So in any case we can apply Proposition \ref{reima} to $R\in\alg_\Fs$ to compute the affine group scheme $\affaut(\Sa_2)$. 
Denote by $\hbox{\bf Aff}_2\colon\alg_\Fs\to\grp$ the group functor
such that $R\mapsto \Aff_2(R)=\tiny\begin{pmatrix}1 & R\cr 0 & R^\times\end{pmatrix}$. Then we claim
\begin{Th}\label{analuf}
For an arbitrary field $\F$, there is an isomorphism of group schemes $$\affaut(\Sa_2)\cong\hbox{\bf Aff}_2.$$ 
\end{Th}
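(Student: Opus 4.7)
The plan is to lift the pointwise isomorphisms of Proposition~\ref{reima} to a natural isomorphism of group functors. The first observation is that fixing $\F$ selects exactly one of the three cases of Proposition~\ref{reima}: if $\ch(\F)\notin\{2,3\}$ then every $R\in\alg_\Fs$ has $\frac{1}{2},\frac{1}{3}\in R$, so case (A) applies; if $\ch(\F)=2$ then $2R=0$ and $\frac{1}{3}\in R$, so case (B) applies; if $\ch(\F)=3$ then $3R=0$, so case (C) applies. Thus Proposition~\ref{reima} applies uniformly to all $R\in\alg_\Fs$ and produces, for each such $R$, a group isomorphism $\Phi_R\colon\aut((\Sa_2)_R)\xrightarrow{\sim}\Aff_2(R)$ given by the explicit formulas \eqref{tsrifeht}, \eqref{dnoceseht}, or \eqref{drihteht} (according to $\ch(\F)$), together with its inverse $(\lambda,\mu)\mapsto w_i(\lambda,\mu)$.

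The main step is to check that the family $\{\Phi_R\}_R$ is natural in $R$, i.e.\ that for every morphism $\varphi\colon R\to R'$ in $\alg_\Fs$ the square
\[
\begin{tikzcd}
\aut((\Sa_2)_R) \arrow[r,"\Phi_R"]\arrow[d,"\aut(\varphi_{*})"'] & \Aff_2(R)\arrow[d,"\Aff_2(\varphi)"]\\
\aut((\Sa_2)_{R'}) \arrow[r,"\Phi_{R'}"] & \Aff_2(R')
\end{tikzcd}
\]
commutes. This is a direct verification because every entry of the matrix $w_i(\lambda,\mu)$ appearing in \eqref{tsrifeht}, \eqref{dnoceseht}, \eqref{drihteht} is a Laurent monomial in $\lambda,\mu$ with coefficients in the prime subring of $\F$; ring homomorphisms preserve such expressions and commute with the formation of inverses of invertible elements. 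Hence applying $\varphi$ entrywise to $w_i(\lambda,\mu)$ yields $w_i(\varphi(\lambda),\varphi(\mu))$, which under $\Phi_{R'}$ corresponds to $(\varphi(\lambda),\varphi(\mu))=\Aff_2(\varphi)(\lambda,\mu)$, and the square commutes.

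No substantial obstacle is anticipated: the algebraic content is already bundled into Proposition~\ref{reima}, and the naturality is a mechanical consequence of the parameterization $(\lambda,\mu)\leftrightarrow w_i(\lambda,\mu)$ being given by universal rational formulas independent of $R$. The only bookkeeping nuisance is the split into three parameterizations according to $\ch(\F)$, but since $\ch(\F)$ is fixed once $\F$ is fixed, only one of them is ever invoked throughout. Assembling the resulting natural transformations $\Phi$ and $\Phi^{-1}$ yields the desired isomorphism of group schemes $\affaut(\Sa_2)\cong\hbox{\bf Aff}_2$.
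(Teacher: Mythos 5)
Your proposal is correct and follows essentially the same route as the paper: the paper likewise observes that fixing $\ch(\F)$ selects one case of Proposition~\ref{reima} uniformly for all $R\in\alg_\Fs$, takes the resulting pointwise isomorphisms $\aut((\Sa_2)_R)\cong\Aff_2(R)$ as the components $\tau_R$, and notes that the group-scheme isomorphism amounts to the commutativity of the naturality squares. Your explicit check of that commutativity --- the entries of $w_i(\lambda,\mu)$ being universal formulas preserved by any $\F$-algebra homomorphism --- is the step the paper leaves implicit, so you have if anything filled in more detail than the original.
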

We recall that the isomorphism condition between group functors is that there is a collection of group isomorphisms $\tau_R\colon\aut((\Sa_2)_R)\cong \Aff_2(R)$ such that when $\alpha\colon R\to S$ is an $\F$-algebra homomorphism, the following squares commute: 
\begin{center}\small
\begin{tikzcd}[column sep=small]
\aut((\Sa_2)_R) \arrow[r, "\tau_R"] \arrow[d,"\alpha_1"']
& \Aff_2(R) \arrow[d, "\alpha_2" ] \\
\aut((\Sa_2)_S) \arrow[r, "\tau_S"' ]
& \Aff_2(S)
\end{tikzcd}
\end{center}
where each $\alpha_i$ ($i=1,2$) is given by applying the corresponding functor $\affaut(\Sa_2)$ or $\hbox{\bf Aff}_2$ to the homomorphism $\alpha$.
Consider now the $\F$-algebra of dual numbers $\F(\epsilon)$.
Recall that for an algebraic group $\mathcal{G}\subset\GL(V)$ (with $V$ a finite-dimensional $\F$-vector space),  its Lie algebra is 
$\lie(\mathcal{G})=\{d\in\gl(V)\colon 1+\epsilon d\in\mathfrak{G}(\F(\epsilon))$.
Thus $\lie(\affaut{\Sa_2})\cong\lie(\hbox{\bf Aff}_2)\cong\aff_2(\F)$
where 
$$\aff\nolimits_2(\F)=\left\{\begin{pmatrix}0 & \mu\cr 0 & \l\end{pmatrix}\colon \l,\mu\in\F\right\}.$$
As a corollary of Proposition \ref{analuf} we have
\begin{corollary}
For an arbitrary field we have:
$$\mathfrak{Der}(\Sa_2)\cong\aff\nolimits_2(\F).$$
\end{corollary}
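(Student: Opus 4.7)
The plan is to identify $\mathfrak{Der}(\Sa_2)$ with $\lie(\affaut(\Sa_2))$ and then to invoke the isomorphism $\affaut(\Sa_2)\cong\hbox{\bf Aff}_2$ established in Theorem \ref{analuf} to transport the computation of the Lie algebra over to $\lie(\hbox{\bf Aff}_2)\cong\aff\nolimits_2(\F)$, which has already been carried out in the paragraph preceding the statement.

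First I would make explicit the general identification $\mathfrak{Der}(A)=\lie(\affaut(A))$, valid for any finite-dimensional $\F$-algebra $A$. Working with the dual numbers $\F(\epsilon)$, any $\F$-linear endomorphism $d\colon A\to A$ extends $\F(\epsilon)$-linearly to $A_{\F(\epsilon)}=A\otimes_\F\F(\epsilon)$, and the map $\Id+\epsilon d$ is automatically invertible (with inverse $\Id-\epsilon d$ since $\epsilon^2=0$). Expanding $(\Id+\epsilon d)(xy)=(\Id+\epsilon d)(x)(\Id+\epsilon d)(y)$ in $A_{\F(\epsilon)}$ and comparing the $\epsilon$-coefficient yields exactly the Leibniz rule $d(xy)=d(x)y+xd(y)$. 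Hence
\[
\mathfrak{Der}(\Sa_2)=\{d\in\gl(\Sa_2)\colon \Id+\epsilon d\in\aut((\Sa_2)_{\F(\epsilon)})\}=\lie(\affaut(\Sa_2)).
\]

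Next I would apply the isomorphism of group functors $\tau\colon\affaut(\Sa_2)\xrightarrow{\sim}\hbox{\bf Aff}_2$ provided by Theorem \ref{analuf}. Evaluating $\tau$ at $R=\F(\epsilon)$ yields a group isomorphism compatible, by functoriality, with the projection $\F(\epsilon)\to\F$; restricting to the kernel of this projection on both sides gives an isomorphism of Lie algebras $\lie(\affaut(\Sa_2))\cong\lie(\hbox{\bf Aff}_2)$. The latter is $\aff\nolimits_2(\F)$ as recalled in the paragraph above the corollary (and is an immediate linearization of the matrix description $\hbox{\bf Aff}_2(R)=\tiny\begin{pmatrix}1 & R\cr 0 & R^\times\end{pmatrix}$). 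Chaining the three isomorphisms $\mathfrak{Der}(\Sa_2)\cong\lie(\affaut(\Sa_2))\cong\lie(\hbox{\bf Aff}_2)\cong\aff\nolimits_2(\F)$ proves the corollary.

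There is really no hard step here: the content is packed into Theorem \ref{analuf}, which already handles the characteristic-$2$ and characteristic-$3$ cases uniformly. The only bookkeeping worth spelling out is the naturality of $\tau$ with respect to the $\F$-algebra map $\F(\epsilon)\to\F$, ensuring that the tangent-space functor may be applied to both sides to produce the claimed isomorphism of Lie algebras.
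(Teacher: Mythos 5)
Your proposal is correct and follows essentially the same route as the paper: the paper's (implicit) proof is precisely the identification of $\mathfrak{Der}(\Sa_2)$ with $\lie(\affaut(\Sa_2))$ via the dual numbers $\F(\epsilon)$, followed by the isomorphism $\affaut(\Sa_2)\cong\hbox{\bf Aff}_2$ of Theorem \ref{analuf} and the computation $\lie(\hbox{\bf Aff}_2)\cong\aff\nolimits_2(\F)$. Your write-up merely makes explicit the two points the paper leaves tacit (that $\Id+\epsilon d$ is an automorphism exactly when $d$ is a derivation, and the naturality of the isomorphism with respect to $\F(\epsilon)\to\F$), which is a faithful elaboration rather than a different argument.
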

\remove{
\subsubsection{Diagonalizable automorphisms}
Let as before $B=(\Sa_2)_R$, the basis $\{e_i\}$ of $\Sa_2$ induces a basis $\{e_i\otimes 1\}$ of $B$. For any $x\in \Sa_2$ we will abuse of the notation and denote also by $x$ the  element $x\otimes 1$ of $B$. Consequently, for $r\in R$, we will denote by
$r x$ the element $x\otimes r\in B$. With these identification in mind, we want to determine those automorphisms $f$ of $B$ for which there is a basis $\{u_i\}$ of $\Sa_2$ such that $f$ diagonalizes in the basis $\{u_i\}$ of $(\Sa_2)_R$ for any $R\in\alg_\Fs$. In this case the eigenvalues of $f$ are invertible elements
of $R$. 
\begin{enumerate}
    \item Assume first the case $\ch(\F)\ne 2,3$. So $\frac{1}{2},\frac{1}{3}\in R$ for any $R\in\alg_\Fs$. Each automorphism of $B$ is of the form $w(\lambda,\mu)$. If $\mu=0$ then $w(\lambda,0)$ is diagonal. So consider the case $\mu\ne 0$.
    If $\lambda=\pm 1$ the matrix in \eqref{tsrifeht} is not diagonalizable for $R=\F$. So the only case remaining to analize is when $\lambda\ne\pm 1$ and $\mu\ne 0$. If $1-\lambda$ and $\lambda\mu$ are $\F$-linearly dependent elements of $B$, then $w(\lambda,\mu)$ is diagonalizable because in this case there is some $b\in\F$ such that $b(1-\lambda)+\lambda\mu=0$ and so $w(1,b)w(\lambda,\mu)w(1,b)
   ^{-1}=w(\lambda,0)$ as can be easily checked. So assume $\lambda\ne\pm 1$, $\mu\ne 0$ and $\{1-\lambda,\lambda\mu\}$ $\F$-linearly independent.
   Let us find if possible the eigenvectors. Write $0\ne v=(v_1,v_2,v_3,v_4)\in\F^4$ and impose the conditions 
   $v\cdot w(\l,\m)=k v$ ($k\in R^\times)$. This gives the equalities
    \begin{tiny}  \begin{equation}\label{olluruz}
v_1+\frac{v_2\l\m}{2}=k v_1, v_2\l=k v_2, v_1 \mu +\frac{1}{4} v_2 \lambda  \mu
   ^2+\frac{v_3}{\lambda }=k v_3, \frac{3 v_1 \mu ^2}{4}+\frac{1}{8} v_2 \lambda
    \mu ^3+\frac{3 v_3 \mu }{2 \lambda
   }+\frac{v_4}{\lambda ^2}=k v_4.
   \end{equation}\end{tiny}
   If $v_2\ne 0$ we have $k=\l$ and the first condition in \eqref{olluruz} above gives $v_1(1-\l)+\frac{v_2\l\m}{2}=0$ a contradiction to the linear independence of $\{1-\l,\l\m\}$. Thus $v_2=0$ and the equations \eqref{olluruz} become
   \begin{equation}\label{olluruz2}
   v_1=k v_1,  v_1 \mu +\frac{v_3}{\lambda }=k v_3, \frac{3 v_1 \mu ^2}{4}+\frac{3 v_3 \mu }{2 \lambda
   }+\frac{v_4}{\lambda ^2}=k v_4.
   \end{equation}
   If $v_1\ne 0$ we have $k=1$ so that $v_1 \l\m+v_3(1-\l)=0$ and the linear independence of the elements $\l\m$ and $1-\l$ give again a contradiction. Thus $v_1=0$ so that we have 
   \begin{equation}
   \frac{v_3}{\lambda }=k v_3,\ \frac{3 v_3 \mu }{2 \lambda
   }+\frac{v_4}{\lambda ^2}=k v_4.
   \end{equation}
   If $v_3\ne 0$ this implies $k=1/\l$ so that $\frac{3 v_3 \mu }{2 \lambda
   }+v_4(\frac{1}{\lambda ^2}-\frac{1}{\l})=0$ and this contradicts again the linear independence of $1-\l$ and $\l\m$. Thus $v_3=0$ and $v_4=k\l^2 v_4$. Since $v_4\ne 0$ we have $k=1/\l^2$ and the unique possible eigenvector (up to scalar multiples) is $(0,0,0,1)$ with eigenvalue $1/\l^2$. So $w(\l,\m)$ is not diagonalizable relative to a basis of $\Sa_2$.
   \begin{Lem}\label{napiato} If $\ch{\F}\ne 2,3$ the automorphism $w(\l,\m)$ of $B=(\Sa_2)_R$ is diagonalizable relative to a basis of $\Sa_2$ iff and only if $\{1-\l,\l\m\}$ are $\F$-linearly dependent elements of $B$. In this case $w(\l,\m)$ is conjugated
   to $w(\l,0)$ whose eigenvectors are $e_1,e_2,e_3,e_4$ with associated eigenvalues $1,\l,\l^{-1},\l^{-2}$.
   \end{Lem} 
To be more precise the conjugation of $w(\l,\m)$ giving $w(\l,0)$ is 
$w(1,b)w(\lambda,\mu)w(1,b)
   ^{-1}=w(\lambda,0)$ as mentioned above (it is important that $b\in\F$).
\item Assume now the case $\ch{\F}=2$ (and $\m\ne 0$ since $w_2(\l,0)$ is diagonal). If $\l=1$ then $w(1,\m)$ is not diagonalizable relative to a basis of $\Sa_2$. So we assume $\l\ne 1$. If
$\{1+\l,\m\}$ is $\F$-linearly dependent then 
there is some nonzero $b\in\F$ such that $\m+b(1+\l)=0$. Then we have $w_2(1,b)w_2(\l,\m)w_2(1,b)^{-1}=w_2(\l,0)$. So in this case $w_2(\l,\m)$ diagonalizes relative to a basis of $\Sa_2$.
If we assume the $\F$-linearly independence of $\{1+\l,\m\}$ then $w_2(\l,\m)$ is not diagonalizable relative to a basis of $\Sa_2$. The proof of this is analogous to the one in the previous case. So we have proved the $p=2$ case of:
\begin{Lem}\label{Nap} If $\ch{\F}=p\in\{2,3\}$ the automorphism $w_p(\l,\m)$ of $B=(\Sa_2)_R$ is diagonalizable relative to a basis of $\Sa_2$ iff and only if $\{1-\l,\m\}$ are $\F$-linearly dependent elements of $B$. In this case $w_p(\l,\m)$ is conjugated
   to $w_p(\l,0)$ whose eigenvectors are $e_1,e_2,e_3,e_4$ with associated eigenvalues $1,\l,\l^{-1},\l^{-2}$.
   \end{Lem} 
\item Assume now the case $\ch{\F}=3$. As before, we exclude the case $\m=0$. If $\l=1$ then $w(1,\m)$ is not diagonalizable.  So we assume
$\l\ne 1$. If $\{1-\l,\m\}$ is linearly dependent, then as in previous cases $w_3(\l,\m)$ is conjugated to $w_3(\l,0)$. More precisely $w_3(1,b)w_3(\l,\m)w_3(1,b)^{-1}=w_3(\l,0)$ for a suitable scalar $b\in\F$ which only exists if $\{1-\l,\m\}$ is linearly dependent.
Thus we have also the $p=3$ case of Lemma \ref{Nap}. 
\end{enumerate}
\subsubsection{Group gradings on $\Sa_2$.}
Let $G$ be an abelian group and $\Sa_2=\oplus_{g\in G}(\Sa_2)_g$ a grading on $\Sa_2$ over the group $G$. Then there is homomorphism of affine group schemes 
$\rho\colon h^{\F G}\to\affaut(\Sa_2)$ where $h^{\F G}\colon\alg_\F\to\grp$ is the group functor such that $h^{\F G}(R):=\hom(\F G,R)$ (here $\hom(\quad)$ stands for $\hom_{\tiny\alg_\F}$). For any $R\in\alg_\F$ and any $\varphi\in\hom(\F G,R)$ we have $\rho_R(\varphi)\colon (\Sa_2)_R\to (\Sa_2)_R$ given by $a_g\otimes 1\mapsto a_g\otimes\varphi(g)$ for any $a_g\in (\Sa_2)_g$. So there is a basis of $\Sa_2$ diagonalizing $\rho_R(\varphi)$.
By Lemmas \ref{Nap} and \ref{napiato} the basis of $\Sa_2$ diagonalizing 
$\rho_R(\varphi)$ can be chosen to be $\{e_1,e_2,e_3,e_4\}$ with associated eigenvalues $1,\l,\l^{-1}$ and $\l^{-2}$. If $S:=\{1,\l,\l^{-1},\l^2\}$ has cardinal $4$ then the grading of $\Sa_2$ is 
$\Sa_2=(\Sa_2)_1\oplus (\Sa_2)_\l\oplus (\Sa_2)_{\l^{-1}}\oplus (\Sa_2)_{\l^{-2}}$ with $(\Sa_2)_1=\F e_1$, $(\Sa_2)_\l=\F e_2$, $(\Sa_2)_{\l^{-1}}=\F e_3$ and $(\Sa_2)_{\l^{-2}}=\F e_4$.
If $\vert S\vert=2$ then $\l^2=1$ and the grading is $\Sa_2=(\Sa_2)_1\oplus (\Sa_2)_{-1}$ with $(\Sa_2)_1=\F e_1\oplus \F e_4$ and $(\Sa_2)_{-1}=\F e_2\oplus \F e_3$. Finally, in case $\vert S\vert=3$ then $\l^3=1$ and the eigenvalues are $\{1,\l,\l^2\}$ with $\l^2=\l^{-1}$. Thus the grading is $\Sa_2=(\Sa_2)_1\oplus (\Sa_2)_\l\oplus (\Sa_2)_{\l^{-1}}$ with $(\Sa_2)_1=\F e_1$, $(\Sa_2)_\l=\F e_2\oplus\F e_4$, $(\Sa_2)_{\l^{-1}}=\F e_3$.
\begin{prop}\label{gacem}
For an arbitrary ground field $\F$, any group grading on $\Sa_2$ is equivalent to
\begin{enumerate}
    \item The fine $\Z$-grading $\Sa_2=(\Sa_2)_{-1}\oplus (\Sa_2)_0\oplus (\Sa_2)_1\oplus (\Sa_2)_2$ with $(\Sa_2)_{-1}=\F e_2$, $(\Sa_2)_0=\F e_1$, $(\Sa_2)_1=\F e_3$, $(\Sa_2)_2=\F e_4$.
    \item The $\Z_2$-grading $\Sa_2=(\Sa_2)_0\oplus (\Sa_2)_1$ with $(\Sa_2)_0=\F e_1\oplus\F e_4$ and $(\Sa_2)_1=\F e_2\oplus\F e_3$.
    \item The $\Z_3$-grading $\Sa_2=(\Sa_2)_{0}\oplus (\Sa_2)_1\oplus (\Sa_2)_2$ where 
    $(\Sa_2)_0=\F e_1,$  $(\Sa_2)_1=\F e_3$ and $(\Sa_2)_2=\F e_2\oplus\F e_4$.
\end{enumerate}
In any case the grading group given is the universal grading group in the sense of \cite{dm06}.
\end{prop}
}

\subsection{Conservative algebra $W_2$}\label{zoro}

Consider now the six-dimensional $\F$-algebra $W_2$ whose multiplication algebra is given in the following table
\begin{center}
\begin{longtable}{|c|c|c|c|c|c|c|}
\hline
      & $e_1$ & $e_2$ & $e_3$ & $e_4$ & $e_5$ & $e_6$ \\ \hline
$e_1$ & $-e_1$ & $-3e_2$ & $e_3$ & $3e_4$ & $-e_5$ & $e_6$  \\ \hline
$e_2$ & $3e_2$ & $0$ & $2e_1$ & $e_3$ & $0$ & $-e_5$ \\ \hline
$e_3$ & $-2e_3$ & $-e_1$ & $-3e_4$ & $0$ & $e_6$ & $0$  \\ \hline
$e_4$ & $0$ & $0$ & $0$ & $0$ & $0$ & $0$ \\ \hline
$e_5$ & $-2e_1$ & $-3e_2$ & $-e_3$ & $0$ & $-2e_5$ & $-e_6$ \\ \hline
$e_6$ & $2e_3$ & $e_1$ & $3e_4$ & $0$ & $-e_6$ & $0$ \\ \hline
\end{longtable}
\end{center}
If $\F$ is of characteristic $\ne 2,3$, the graph of $B$ relative to the basis of the $e_i$'s is 
\[
\xygraph{ !{<0cm,0cm>;<1.5cm,0cm>:<0cm,1.2cm>::} 
!{(0,0) }*+{e_1}="a"
!{(.7,0.3) }*+{e_2}="b"
!{(.5,-.5) }*+{e_3}="c" 
!{(-.7,.3) }*+{e_5}="d" 
!{(-.5,-.5) }*+{e_6}="e"
!{(0,-1)}*+{e_4}="f" 
"a":@/^/"b" "b":@/^/"c" "c":@/^/"a"
"a":@/_/"d" "d":@/_/"e" "e":@/_/"a"
"c":@/^/"f" "f":@/^/ "e"
} 
\]
which is of course strongly connected. In the case $\hbox{char}(\F)=3$ the graph
is not strongly connected:
\[
\xygraph{ !{<0cm,0cm>;<1.5cm,0cm>:<0cm,1.2cm>::} 
!{(0,0) }*+{e_1}="a"
!{(1,0) }*+{e_3}="b"
!{(.5,-.5) }*+{e_6}="c" 
!{(-.7,0) }*+{e_5}="d" 
!{(1.7,0) }*+{e_4.}="e"
!{(0.5,.7)}*+{e_2}="f" 
"a":@/^/"b" "b":@/^/"c" "c":@/^/"a"
"d":@/^/"a" "a":@/^/"d"
"b":@/^/"e" "e":@/^/"b"
"f":"a"
} 
\]
There is an ideal $I:=\oplus_{i\ne 2}\F e_i$ so that $W_2=I\oplus\F e_2$ and again
$W_2/I\cong\F e_2$ is a zero-product algebra.
\newline
\begin{Remark}\label{fooo}\rm
Recall that the Jacobson radical $\rad(A)$ of a unital algebra $A$ contains
every nilpotent ideal of $A$. On the other hand if $I\triangleleft A$ and 
$A/I$ is semisimple, then $\rad(A)\subset I$. 
\end{Remark}

%

\begin{Th}\label{siri}
For a ground field $\F$ of characteristic not $3$, we have $\M(W_2)={\rm End}_\F(W_2)$, hence $W_2$ is simple. In the characteristic $3$ case, $W_2$ has a five dimensional ideal $I=\oplus_{i\ne 2}\F e_i$ and $W_2/I\cong\F e_2$ has zero product. The multiplication algebra $\M=\M(W_2)$ is $20$-dimensional, its radical $\rad(\M)$ is $12$-dimensional
and $\M/\rad(\M)=M_2(\F)\oplus M_2(\F)$. 
\end{Th}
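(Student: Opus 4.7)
The proof will parallel the treatment of $\Sa_2$ in the previous theorem, split according to whether $\ch(\F)\in\{0,\mathrm{not}\ 2,3\}$, $\ch(\F)=2$, or $\ch(\F)=3$. For $\ch(\F)\neq 3$ I will invoke Proposition \ref{atunrojo} together with Remark \ref{locomia}: having already observed that the graph of $W_2$ relative to the given basis is strongly connected, it suffices to exhibit, for each row index $i\in\{1,\ldots,6\}$, a single matrix unit $E_{ij}\in\M$; the strong connectedness then propagates this to every $E_{ik}$.

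In the case $\ch(\F)\neq 2,3$ I would first note that the subspace spanned by $e_1,e_2,e_3,e_4$ carries exactly the multiplication of $\Sa_2$, so repeating the chain of computations of the previous theorem (starting with $E_{32}=\tfrac{1}{3}R_{e_2}^2\in\M$ and following the cascade that produces $E_{1k},E_{2k},E_{3k},E_{4k}$) delivers all matrix units supported on the $\Sa_2$-block. To reach rows $5$ and $6$ I would exploit $R_{e_5},R_{e_6},L_{e_5},L_{e_6}$, whose action on the basis is concentrated in very few columns; composing them with matrix units already captured in $\M$ kills the unwanted rows and isolates concrete $E_{5j}$ and $E_{6j}$. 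The case $\ch(\F)=2$ is entirely analogous: rewrite the table in characteristic two, tabulate the matrices of products $R_{e_i}R_{e_j}$ and $L_{e_i}L_{e_j}$, and read off matrix units from the resulting expressions just as in part (B) of the previous theorem.

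The substantive part is the case $\ch(\F)=3$. The closure of $I=\bigoplus_{i\ne 2}\F e_i$ under two-sided multiplication is immediate from the table once one notes that every product $e_r e_s$ with $\{r,s\}\neq\{2\}$ lies in $I$, while the products $e_2\cdot e_k$ and $e_k\cdot e_2$ that would exit $I$ vanish in characteristic $3$ (the coefficients $-3$ and $3$ appearing in the relevant entries kill them). Thus $W_2=I\oplus\F e_2$ and $W_2/I$ is a zero-product line. To pin down $\M=\M(W_2)$, I would write the twelve $6\times 6$ matrices of the $L_{e_i}$ and $R_{e_i}$ in characteristic $3$ and compute the $\F$-span of all compositions; the expected outcome is a $20$-dimensional subalgebra. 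To locate its radical I would reuse the trick of the previous theorem: consider the associative symmetric form $\esc{f,g}:=\tr(fg)$ on $\M$, whose radical $\M^{\bot}$ is automatically a two-sided ideal. A Gram-matrix calculation in a chosen basis of $\M$ should give $\dim\M^{\bot}=12$, and verifying $(\M^{\bot})^2=0$ yields $\M^{\bot}\subseteq\rad(\M)$. Finally, I would identify the $8$-dimensional quotient $\M/\M^{\bot}$ as $M_2(\F)\oplus M_2(\F)$ by exhibiting two orthogonal idempotents arising from the ``$\{e_1,e_3\}$-block'' and the ``$\{e_5,e_6\}$-block'' of the table, each of which supports a complete $2\times 2$ system of matrix units modulo $\M^{\bot}$.

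The principal obstacle is this last step. Dimension counting becomes routine once a basis of $\M$ is in hand, but exhibiting the $M_2\oplus M_2$ structure requires explicitly producing two orthogonal central idempotents in $\M/\M^{\bot}$ and a pair of commuting $2\times 2$ matrix systems. The table exhibits a visible symmetry between $(e_2,e_3)$ and $(e_5,e_6)$ which should drive the block decomposition, with the idempotents plausibly coming from images of operators like $L_{e_1}$ and $L_{e_5}$ acting diagonally on the relevant subblocks; nevertheless the concrete verification is where the careful bookkeeping lives, and getting the nilpotency $(\M^{\bot})^2=0$ exactly right is needed to conclude, via the classical Wedderburn argument used in the $\Sa_2$ theorem, that $\M^{\bot}=\rad(\M)$.
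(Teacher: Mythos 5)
Your overall strategy is the same as the paper's (strong connectedness plus Proposition \ref{atunrojo} and Remark \ref{locomia} for $\ch(\F)\neq 3$; trace-form radical plus explicit $2\times 2$ matrix-unit systems for $\ch(\F)=3$), but two of your concrete steps fail as stated. First, in characteristic $\neq 2,3$ the $\Sa_2$ cascade does \emph{not} carry over to $W_2$: although $\oplus_{i=1}^4\F e_i$ is a subalgebra isomorphic to $\Sa_2$, the multiplication operators of $W_2$ by elements of that subalgebra also act nontrivially on $e_5,e_6$, so they are different matrices. Concretely, in $W_2$ one has $R_{e_2}^2=3E_{32}-3E_{62}$, not $3E_{32}$; hence $\tfrac13 R_{e_2}^2$ is not $E_{32}$, and every subsequent composition in your cascade carries a ``ghost'' in rows $5,6$ (you produce operators of the form $E_{3k}-E_{6k}$, never the matrix units themselves). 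This is precisely why the paper replaces the $\Sa_2$ computations with new ones specific to $W_2$: it first gets clean units from $L_{e_2}^3=6E_{42}$, $L_{e_3}^3=-6E_{24}$ and $L_{e_2}^2L_{e_5}^3=-6E_{32}$, and only then uses $R_{e_2}^2=3E_{32}-3E_{62}$ to extract $E_{62}$, and $L_{e_2}$, $R_{e_2}$ to reach rows $1$ and $5$. Your fallback of composing with $L_{e_5},R_{e_5},\ldots$ to ``kill unwanted rows'' is the right kind of repair, but the claim that repeating the $\Sa_2$ chain ``delivers all matrix units supported on the $\Sa_2$-block'' is false, and that claim is carrying the nontrivial part of the argument.

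Second, in characteristic $3$ you stake the Wedderburn step on verifying $(\M^\bot)^2=0$. That verification would fail: for $W_2$, unlike for $\Sa_2$, the radical of the trace form is not square-zero. The paper computes
$(\M^\bot)^2=\F(E_{43}+E_{46})\oplus\F(E_{41}+E_{45})+\F(E_{23}+E_{26})+\F(E_{21}+E_{25})\neq 0$,
with $(\M^\bot)^4=0$. Since only nilpotency is needed to get $\M^\bot\subseteq\rad(\M)$ (Remark \ref{fooo}), your argument survives once you replace ``$(\M^\bot)^2=0$'' by ``$\M^\bot$ is nilpotent,'' but the identity you propose to check is simply wrong for this algebra. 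Finally, a smaller correction on the block structure of the quotient: the paper's two orthogonal $2\times 2$ systems are $e_{ij}$ built on rows $\{1,3\}$ (e.g. $e_{1,1}=E_{11}+E_{15}$, $e_{1,2}=E_{13}+E_{16}$) and $u_{ij}$ built on rows $\{5,6\}$ (e.g. $u_{1,1}=-E_{15}+E_{55}$), giving $\M=\M^\bot\oplus S$ with $S\cong M_2(\F)\oplus M_2(\F)$; the pairing is between $(e_1,e_3)$ and $(e_5,e_6)$, not $(e_2,e_3)$ as you suggest --- rows $2$ and $4$ lie entirely inside $\M^\bot$.
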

\begin{proof}
We will apply Proposition \ref{atunrojo} repeatedly.
\begin{enumerate}
    \item[(A)] 
Assume first that $\hbox{char}(\F)\ne 2,3$. Then 
$L_{e_2}^3=6 E_{42}$ hence $E_{4k}\in\M:=\M(W_2)$ for any $k$ (take into account Remark \ref{locomia}). 
Since $L_{e_3}^3=-6E_{24}$ we have $E_{2k}\in\M$ for any $k$. 
Also $L_{e_2}^2L_{e_5}^3=-6E_{32}$ hence $E_{3k}\in\M$ for every $k$.
Since $R_{e_2}^2=3 E_{32}-3 E_{62}$ we conclude $E_{6k}\in\M$ for every $k$.
Furthermore $L_{e_2}=3E_{12}+2E_{31}+E_{43}-E_{65}$ hence $E_{1k}\in\M$ for any $k$. And finally  $R_{e_2}=-3E_{12}-E_{31}-3E_{52}+E_{61}$ which implies
$E_{5k}\in\M$ for every $k$. Thus $\M=\Endo_\F(W_2)$. 

   \item[(B)] When the ground field has characteristic $2$ we take into account:
\begin{longtable}{rclll}
$R_{e_2}^2L_{e_2}$ &$ =$&  $E_{42}$ & \hbox{ implying } & $E_{4k}\in\M \hbox{ for any } k.$\\
$R_{e_2}L_{e_2}$ &$ =$ & $E_{41}+E_{62}$ & \hbox{ implying } & $E_{6k}\in\M \hbox{ for any } k.$\\
$L_{e_2}$ &  $=$ & $E_{12}+E_{43}+E_{65}$ & \hbox{ implying } & $E_{1k}\in\M \hbox{ for any } k.$\\
$R_{e_4}$ & $=$ & $E_{14}+E_{23}$ & \hbox{ implying } & $E_{2k}\in\M
\hbox{ for any } k.$\\
$L_{e_5}$ & $=$ & $E_{22}+E_{33}+E_{66}$ & \hbox{ implying }  & $E_{3k}\in\M \hbox{ for any } k.$\\
$L_{e_1}$ & $ = $& $\sum_1^6 E_{ii}$ & \hbox{ implying } & $ E_{5k}\in\M \hbox{ for any } k.$
\end{longtable}
 
%
    \item[(C)] 
In case $\text{char}(\F)=3$ we have 
\begin{longtable}{ll}
$L_{e_1}= -E_{11}+E_{33}-E_{55}+E_{66}$, & 
$R_{e_1}= -E_{11}+E_{33}+E_{51}-E_{63}$,\\
$L_{e_2}= -E_{31}+E_{43}-E_{65}$, & 
$R_{e_2}= E_{61}-E_{31},$\\
$L_{e_3}= E_{13}-E_{21}+E_{56}$, & 
$R_{e_3}= E_{13}-E_{21}-E_{53}$,\\
$L_{e_4}= 0$, & $R_{e_4}= E_{23}$,\\
$L_{e_5}= E_{11}-E_{33}+E_{55}-E_{66}$, & 
$R_{e_5}= -E_{15}+E_{36}+E_{55}-E_{66}$,\\
$L_{e_6}= -E_{13}+E_{21}-E_{56}$, & 
$R_{e_6}= E_{16}-E_{25}-E_{56}$.
\end{longtable}

%
%


A basis for $\M$ is given by the set of matrices:

\begin{longtable}{llll}
$E_{11}+E_{55},$ & 
$E_{33}+E_{66},$ & 
$E_{31}+E_{65},$ & 
$E_{13}+E_{56},$ \\

$E_{11}-E_{51},$ & 
$E_{33}-E_{63},$ & 
$E_{31}-E_{61},$ & 
$E_{13}-E_{53},$ \\

$E_{15}-E_{55},$ & 
$E_{36}-E_{66},$ & 
$E_{16}-E_{56},$ & 
$E_{35}-E_{65},$ \\

$E_{21},$ & 
$E_{23},$ & 
$E_{25},$ & 
$E_{26},$ \\

$E_{41},$ & 
$E_{43},$ & 
$E_{45},$ & 
$E_{46}.$

\end{longtable}

We have computed again the radical  $\M^\bot$ of its trace form 
$\esc{f,g}:=\tr(fg)$ and it is $12$-dimensional ideal.
More precisely 
\begin{longtable}{lcl}
$\M^\bot:=\rad(\esc{\cdot,\cdot})$ &$=$ & $
\F E_{21}\oplus \F E_{23}\oplus \F E_{25}\oplus \F E_{26}\oplus\F  E_{41}\oplus\F E_{43}\oplus\F E_{45}\oplus\F E_{46}\oplus$\\
&& $\F(E_{11}+E_{15}-E_{51}+2 E_{55})\oplus 
\F(E_{13}+E_{16}-E_{53}+2 E_{56})\oplus$\\
&&$\F(E_{31}+E_{35}-E_{61}+2 E_{65})\oplus
\F(E_{33}+E_{36}-E_{63}+2 E_{66})$
\end{longtable}

and $(\M^\bot)^2=\F(E_{43}+E_{46})\oplus \F(E_{41}+E_{45})+\F(E_{23}+E_{26})+\F(E_{21}+E_{25})$ being $(\M^\bot)^4=0$. Since $\M^\bot$ is nilpotent, $\rad(\M)\supset\M^\bot$ (see Remark \ref{fooo}).
Define next the subspace $S$ of $\M$ whose basis is 
$\{e_{ij}\}_{i,j=1}^2\sqcup \{u_{ij}\}_{i,j=1}^2$ given by 
\begin{longtable}{llll}
$e_{1,1}=E_{11}+E_{15},$ & 
$e_{2,2}=E_{33}+E_{36},$ &  
$e_{1,2}=E_{13}+E_{16},$ & 
$e_{2,1}=E_{31}+E_{35},$\\
$u_{1,1}=-E_{15}+E_{55},$ & 
$u_{2,2}=-E_{36}+E_{66},$ &  
$u_{1,2}=E_{16}-E_{56},$ & 
$u_{2,1}=E_{35}-E_{65}.$
\end{longtable}
If $\delta_{ij}$ denotes the Kronecker delta, it is easy to check that $e_{ij}e_{kl}=\delta_{jk}e_{il}$, 
$u_{ij}u_{kl}=\delta_{jk}u_{il}$ and $e_{ij}u_{kl}=u_{kl}e_{ij}=0$ for any $i,j,k,l\in\{1,2\}$.
Thus $S\cong M_2(\F)\oplus M_2(\F)$ and furthermore $\M=\M^\bot\oplus S$. Thus $\M/\M^\bot\cong M_2(\F)\oplus M_2(\F)$ is semisimple which implies $\rad(\M)\subset \M^\bot$.
So $\rad(\M)=\M^\bot$.
    
\end{enumerate}
\end{proof}

\subsubsection{Automorphisms of $W_2$}\label{guaydel}
In this section we work over a commutative ring $R$ and denote $W_2(R)$ the $R$-algebra $\oplus_{i=1}^6 R e_i$ where the multiplication table of the $e_i$'s is that of the multiplication table of $W_2$ given above. 
 If we take a generic element $w=\sum_{i=1}^6\l_i e_i\in W_2(R)$ and compute the matrix of $L_w$
relative to the basis of the $e_i$'s we obtain:
{\tiny$$\left(
\begin{array}{cccccc}
 -\lambda _1-2 \lambda _5 & 3 \lambda _2 & 2 \lambda _6-2 \lambda _3 & 0 & 0 & 0 \\
 \lambda _6-\lambda _3 & -3 \lambda _1-3 \lambda _5 & 0 & 0 & 0 & 0 \\
 2 \lambda _2 & 0 & \lambda _1-\lambda _5 & 3 \lambda _6-3 \lambda _3 & 0 & 0 \\
 0 & 0 & \lambda _2 & 3 \lambda _1 & 0 & 0 \\
 0 & 0 & 0 & 0 & -\lambda _1-2 \lambda _5 & \lambda _3-\lambda _6 \\
 0 & 0 & 0 & 0 & -\lambda _2 & \lambda _1-\lambda _5 \\
\end{array}
\right).$$}
If $f$ is any element in $\aut(W_2(R))$ we know
$L_{f(w)}=f L_w f^{-1}$ hence the characteristic polynomial of $L_w$ is invariant under automorphism of $W_2(R)$. So the coefficients of that polynomial are invariants and we list here:

\begin{longtable}{lcl}
$\ll_1(w)$&$:=$&$\ 9\l_5,$\\  

$\ll_2(w)$&$:=$&$ -11 \lambda _1^2-11 \lambda _5 \lambda _1+31 \lambda _5^2+11 \lambda _2 \lambda _3-11 \lambda _2 \lambda _6,$ \\

$\ll_3(w)$&$:=$&$ -3 \lambda _5 \left(22 \lambda _1^2+22 \lambda _5 \lambda _1-17 \lambda _5^2-22 \lambda _2 \lambda _3+22 \lambda _2 \lambda _6\right),$\\

$\ll_4(w)$&$:=$&$19 \lambda _1^4+38 \lambda _5 \lambda _1^3-120 \lambda _5^2 \lambda _1^2-38 \lambda _2 \lambda _3 \lambda _1^2+38 \lambda _2 \lambda _6 \lambda _1^2-139 \lambda _5^3 \lambda _1-38 \lambda _2 \lambda _3 \lambda _5 \lambda _1+$\\ 
\multicolumn{3}{r}{ $38 \lambda _2 \lambda _5 \lambda _6 \lambda _1+40 \lambda _5^4+19 \lambda _2^2 \lambda _3^2+139 \lambda _2 \lambda _3 \lambda _5^2+19 \lambda _2^2 \lambda _6^2-139 \lambda _2 \lambda _5^2 \lambda _6-38 \lambda _2^2 \lambda _3 \lambda _6,$}\\ 
    
$    \ll_5(w)$&$:=$&$3 \lambda _5 \left(\lambda _1^2+\lambda _5 \lambda _1-2 \lambda _5^2-\lambda _2 \lambda _3+\lambda _2 \lambda _6\right) \left(19 \lambda _1^2+19 \lambda _5 \lambda _1-2 \lambda _5^2-19 \lambda _2 \lambda _3+19 \lambda _2 \lambda _6\right),$\\

$\ll_6(w)$&$:=$&$-9 \left(\lambda _1^2+\lambda _5 \lambda _1-\lambda _2 \lambda _3+\lambda _2 \lambda _6\right) \left(\lambda _1^2+\lambda _5 \lambda _1-2 \lambda _5^2-\lambda _2 \lambda _3+\lambda _2 \lambda _6\right){}^2.$
\end{longtable} 
All these polynomial remain invariant under automorphism but we are using only the first one: $\ll_1$. 

\begin{Lem}\label{ecniuq}
If an element $x\in W_2(R)\setminus\{0\}$ with 
$\ll_1(x)=9$ satisfies $x^2=-2x$ and $\tor_3(R)=0$ then $x=s e_4+e_5+r e_6$ for some $r,s\in R$ such that $2s=0$. In particular if $\frac{1}{2}\in R$ we have $x=e_5+r e_6$. 
\end{Lem}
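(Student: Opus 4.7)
The plan is to expand $x=\sum_{i=1}^6\lambda_i e_i$ in coordinates and read off what the two constraints force. First, $\ll_1(x)=9\lambda_5=9$ together with $\tor_3(R)=0$ pins down $\lambda_5=1$: note that $\tor_3(R)=0$ implies $\tor_9(R)=0$, since any $y$ with $9y=0$ satisfies $3(3y)=0$, hence $3y=0$, hence $y=0$.

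With $\lambda_5=1$, the next step is to expand the identity $x^2+2x=0$ using the multiplication table of $W_2$ and compare coordinates. The natural order in which to process the six resulting scalar equations is $e_2$, $e_5$, $e_6$, $e_4$. The $e_2$ coefficient is linear and reads $-3\lambda_2=-2\lambda_2$, giving $\lambda_2=0$. The $e_5$ coefficient then reads $-\lambda_1-\lambda_2\lambda_6-2=-2$, giving $\lambda_1=0$. The $e_6$ coefficient reads $\lambda_1\lambda_6+\lambda_3-2\lambda_6=-2\lambda_6$, which (with $\lambda_1=0$) forces $\lambda_3=0$. Finally the $e_4$ coefficient reads $3\lambda_1\lambda_4-3\lambda_3^2+3\lambda_3\lambda_6=-2\lambda_4$, which collapses to $2\lambda_4=0$. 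The parameter $\lambda_6$ is entirely unconstrained, and the $e_1$ and $e_3$ coefficient equations become $0=0$ once $\lambda_1=\lambda_2=\lambda_3=0$. Setting $s:=\lambda_4$ and $r:=\lambda_6$ yields $x=se_4+e_5+re_6$ with $2s=0$, as claimed. The ``in particular'' statement is immediate: if $\frac{1}{2}\in R$ then $2s=0$ forces $s=0$, leaving $x=e_5+re_6$.

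No real obstacle is expected. The proof is a direct coefficient comparison once the multiplication table is unpacked; all quadratic cross-terms in $x^2$ that could in principle complicate the argument (such as $\lambda_2\lambda_6$ in the $e_5$-coefficient, or $\lambda_1\lambda_4$ and $\lambda_3\lambda_6$ in the $e_4$-coefficient) vanish automatically as soon as the preceding unknowns $\lambda_1,\lambda_2,\lambda_3$ have been shown to be zero, which is exactly why the system decouples in the indicated order. The only nontrivial ingredient is the torsion deduction $\tor_3(R)=0\Rightarrow\tor_9(R)=0$ used at the very first step; this is what makes the hypothesis $\tor_3(R)=0$ (rather than the a priori weaker $\tor_9(R)=0$) sufficient to pin down $\lambda_5=1$.
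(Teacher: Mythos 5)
Your proof is correct and follows essentially the same route as the paper's: write $x=\sum_i\lambda_i e_i$, use $\ll_1(x)=9$ together with $\tor_3(R)=0$ to fix $\lambda_5=1$, then compare coefficients in $x^2=-2x$ against the multiplication table to obtain the same six scalar equations, which yield $\lambda_1=\lambda_2=\lambda_3=2\lambda_4=0$ exactly as in the paper. Your explicit processing order ($e_2$, then $e_5$, $e_6$, $e_4$) and the remark that $\tor_3(R)=0$ implies $\tor_9(R)=0$ merely spell out steps the paper leaves implicit.
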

\begin{proof}
Take $x=x_i e_i$ where $x_i\in R$. Since $\ll_1(x)=9$ we have $x_5=1$. By using the multiplication table, from the equality $x^2=-2x$ we also obtain
\begin{longtable}{lll} 
$-x_1^2+x_2 x_3+x_2 x_6=0,$ & $x_2=0,$ & $-x_1 x_3+x_3+x_2 x_4+2 x_1 x_6=0,$\\
$-3 x_3^2+3 x_6 x_3+3 x_1 x_4+2 x_4=0,$ & $-x_1-x_2 x_6=0,$ & $x_3+x_1 x_6=0,$
\end{longtable}
which can be summarized in $x_1=x_2=x_3=2x_4=0$. Thus taking $s=x_4$ and $r=x_6$ the Lemma is proved.
\end{proof}

If $\theta\in\aut(W_2(R))$ then $\theta(e_5)^2=-2\theta(e_5)$ and $\ll_1(\theta(e_5))=\ll_1(e_5)=9$. Hence Lemma \ref{ecniuq} implies that if $\tor_3(R)=0$ then 
$\theta(e_5)=s e_4+e_5+r e_6$ with $2s=0$. In case $\tor_2(R)=0$ we have 
$\theta(e_5)=e_5+r e_6$.
\begin{Lem}\label{elady}
Let again $\tor_3(R)=0$.
If $x,y\in W_2(R)$ are linearly independent with 
$\ll_1(x)=9$, $\ll_1(y)=0$, and they satisfies $x^2=-2x$, $xy=yx=-y$, $y^2=0$ then in case $\frac{1}{2}\in R$ we have $x=e_5+r e_6$ and $y=t e_6$ for some $r,t\in R$. 
If $\tor_2(R)=R$ we can only conclude that $x=s e_4+e_5+r e_6$ and $y=t e_6$ for some
$t\in R$.
\end{Lem}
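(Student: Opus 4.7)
The plan is to pin down $x$ via Lemma \ref{ecniuq} first, and then determine $y$ by writing it in coordinates and extracting linear relations from the three hypotheses $yx=-y$, $xy=-y$, $y^2=0$.

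First, since $\ll_1(x)=9$, $x^2=-2x$ and $\tor_3(R)=0$, Lemma \ref{ecniuq} applies directly and yields $x = s e_4 + e_5 + r e_6$ with $2s=0$. When $\tfrac{1}{2}\in R$ this forces $s=0$, so $x=e_5+re_6$; when $2R=0$ no further restriction is imposed on $s$. So the case distinction in the conclusion enters \emph{only} through $x$, and we can work uniformly with $x=se_4+e_5+re_6$ for what follows.

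Next, write $y=\sum_{i=1}^{6} y_i e_i$. The hypothesis $\ll_1(y)=9y_5=0$ together with $\tor_3(R)=0$ (applied twice, since $9=3\cdot 3$) forces $y_5=0$. The main step is now to compute $yx$ directly from the multiplication table of $W_2$. One finds
\begin{align*}
ye_4 &= y_2 e_3 + 3y_1 e_4, \\
ye_5 &= -y_1 e_5 + (y_3-y_6) e_6, \\
ye_6 &= -y_2 e_5 + y_1 e_6,
\end{align*}
so that
\[
yx \;=\; s\,y_2\, e_3 \;+\; 3s\,y_1\, e_4 \;+\; (-y_1 - r y_2)\, e_5 \;+\; (y_3 - y_6 + r y_1)\, e_6.
\]
Comparing with $-y=-y_1 e_1 - y_2 e_2 - y_3 e_3 - y_4 e_4 - y_6 e_6$, the $e_1$- and $e_2$-coefficients give $y_1=0$ and $y_2=0$ immediately (no torsion hypotheses needed), and substituting back into the $e_3$- and $e_4$-coefficients gives $y_3=0$ and $y_4=0$. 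Thus $y = y_6 e_6 =: t e_6$.

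Finally, one checks that for $x$ of the stated form and $y=te_6$ the remaining hypotheses $xy=-y$ and $y^2=0$ hold automatically: $e_5 e_6 = -e_6$ and $e_4 e_6 = e_6 e_6 = 0$ from the table, so $xy = t(s \cdot 0 + (-e_6) + r\cdot 0) = -y$, while $e_6^2=0$ gives $y^2=0$. There is no real obstacle: the key observation is that the relation $yx=-y$ alone, read off coordinate by coordinate, already kills $y_1,y_2,y_3,y_4$, so neither the condition $xy=-y$ nor any torsion subtlety beyond $\tor_3(R)=0$ is needed to isolate $y\in R e_6$. The only bookkeeping point is the case split for $x$, which is inherited from Lemma \ref{ecniuq}.
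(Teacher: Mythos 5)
Your proof is correct and follows essentially the same route as the paper: invoke Lemma \ref{ecniuq} to fix the form of $x$, then write $y$ in coordinates and extract linear equations from the multiplicative relations. The only difference is a streamlining — you observe that the single relation $yx=-y$, read coefficient by coefficient, already forces $y_1=y_2=y_3=y_4=0$ with no ring hypotheses beyond $\tor_3(R)=0$ (so the case split $\frac{1}{2}\in R$ versus $\tor_2(R)=R$ enters only through the form of $x$), whereas the paper assembles a system of ten equations coming from both $xy=-y$ and $yx=-y$ and invokes those case hypotheses in its deduction.
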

\begin{proof}
From Lemma \ref{ecniuq} we know that if $\frac{1}{2}\in R$ we have $x=e_5+r e_6$. Now writing 
$y=\sum \m_i e_i$ (with $\m_5=0$) and imposing $xy=yx=-y$ we get the equations:

\begin{longtable}{lllll}
$  2 r \mu _1=0,$ & $-3 \mu _2=0,$ & $-2 \mu _2=0,$ & $r \mu _2-2 \mu _1=0,$ & $r \mu _2-\mu _1=0,$
\\
$\mu _1+r \mu _2=0,$ & $-r \mu _1-\mu _3=0,$ & $2 r \mu _1-\mu _3=0,$ & $3 r \mu _3=0,$ & $3 r \mu _3+\mu _4=0.$
\end{longtable}

If $\frac{1}{2}\in R$ or $\tor_2(R)=R$, the first row of equations above implies $\m_2=0$, $\m_1=0$. Now the second row gives $\m_3=\m_4=0$. Therefore $y=\m_6 e_6$. If $\tor_2(R)=R$ imposing the conditions we only get $x=s e_4+e_5+r e_6$ and $y=t e_6$.
\end{proof}

For any $\theta\in\aut(W_2(R))$ we can apply Lemma \ref{elady} taking 
$x=\theta(e_5)$ and $y=\theta(e_6)$. Thus, if $\frac{1}{2}\in R$ or
$\tor_2(R)=R$ we have 
$\theta(e_5)=e_5+re_6$ and $\theta(e_6)=t e_6$ with $r,t\in R$ and $t$ invertible. 
In case $\tor_2(R)=R$ we can argue as follows: $\theta(e_2)\theta(e_6)=\theta(e_5)$ so $t\theta(e_2)e_6=s e_4+e_5+r e_6$ with $t$ invertible. But in the image of $R_{e_6}$ is the $R$-submodule $Re_4\oplus R_6$ hence $s=0$. Consequently
\begin{Lem}
Assume that $\tor_3(R)=0$. Then if $\frac{1}{2}\in R$ or $\tor_2(R)=R$
we have $\theta(e_5)=e_5+re_6$ and $\theta(e_6)=t e_6$ with $r,t\in R$ and $t$ invertible. 
\end{Lem}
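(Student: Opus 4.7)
The plan is to apply Lemma \ref{elady} to $x:=\theta(e_5)$ and $y:=\theta(e_6)$, then to remove the residual $e_4$-term that can appear in the $\tor_2(R)=R$ case, and finally to verify invertibility of $t$. To apply Lemma \ref{elady}, I check its hypotheses: $\theta(e_5)$ and $\theta(e_6)$ are $R$-linearly independent since $\theta$ is an $R$-module automorphism of the free module $W_2(R)$ and hence sends a basis to a basis; the identities $x^2=-2x$, $xy=yx=-y$, $y^2=0$ follow by applying $\theta$ to $e_5^2=-2e_5$, $e_5e_6=e_6e_5=-e_6$ and $e_6^2=0$ in the multiplication table of $W_2$; and the invariants $\ll_1(\theta(e_5))=9$, $\ll_1(\theta(e_6))=0$ are preserved because the coefficients of the characteristic polynomial of left multiplication operators are invariants under automorphisms.

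Lemma \ref{elady} then yields $\theta(e_5)=se_4+e_5+re_6$ and $\theta(e_6)=te_6$ for some $r,s,t\in R$, with $s=0$ already in the case $\frac{1}{2}\in R$. So only the case $\tor_2(R)=R$ requires further work to remove the $se_4$-term. For this, I apply $\theta$ to the relation $e_2e_6=-e_5$ obtaining $\theta(e_2)\cdot te_6=-(se_4+e_5+re_6)$. A direct inspection of the multiplication table shows that the image of the right-multiplication operator $R_{e_6}$ is $R$-spanned by $\{e_5,e_6\}$; hence $\theta(e_2)e_6\in Re_5+Re_6$, and consequently $t\cdot\theta(e_2)e_6$ has vanishing $e_4$-component. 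Comparing $e_4$-coefficients on both sides forces $s=0$.

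Finally, for the invertibility of $t$, observe that $\theta^{-1}$ is itself an $R$-algebra automorphism of $W_2(R)$, so applying the same argument to $\theta^{-1}$ gives $\theta^{-1}(e_6)=t'e_6$ for some $t'\in R$; then $e_6=\theta(\theta^{-1}(e_6))=t't\,e_6$, and since $\{e_i\}_{i=1}^6$ is a basis of the free $R$-module $W_2(R)$, we obtain $t't=1$, so $t\in R^\times$. The main obstacle is isolating the $e_4$-term in the characteristic two case: the key observation that makes it vanish is that $e_4$ never appears in the image of the $R$-linear map $R_{e_6}$, which provides the extra rigidity needed to eliminate $s$.
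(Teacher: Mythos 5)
Your proof is correct and takes essentially the same route as the paper: apply Lemma \ref{elady} to $x=\theta(e_5)$, $y=\theta(e_6)$ (after checking its hypotheses via the multiplication table and the invariance of $\ll_1$), then eliminate the residual $s e_4$ term in the $\tor_2(R)=R$ case by applying $\theta$ to $e_2e_6=-e_5$ and observing that the image of $R_{e_6}$ lies in $Re_5\oplus Re_6$. The only differences are cosmetic and in your favor: you state the image of $R_{e_6}$ correctly as $Re_5\oplus Re_6$ (the paper misprints it as $Re_4\oplus Re_6$), and you supply the explicit $\theta^{-1}$ argument for the invertibility of $t$, which the paper asserts without proof.
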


It can be checked that the left annihilator of $W_2(R)$ is the $R$-submodule of all elements $a_1(e_1+e_5)+a_3 (e_3+e_6)+a_4 e_4$ such that $a_i\in R$ with $3a_1=0$:
\begin{equation}\label{rodaluna}
\hbox{\rm Lann}(W_2(R))=\tor\nolimits_3(R)(e_1+e_5)\oplus R(e_3+e_6)\oplus R e_4 \end{equation}

\begin{Lem}\label{samortoy}
If $a$ is in the left annihilator of $W_2(R)$ and satisfies $e_6 a=e_5 a=0$ then $a\in R e_4$
\end{Lem}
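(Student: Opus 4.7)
The plan is to combine the explicit description of the left annihilator given in equation (\ref{rodaluna}) with a short direct computation, exploiting the fact that the hypotheses $e_5 a=0$ and $e_6 a=0$ kill all components other than the $e_4$-component. By (\ref{rodaluna}) we may write
$$a=a_1(e_1+e_5)+a_3(e_3+e_6)+a_4 e_4,$$
with $a_1,a_3,a_4\in R$ subject to $3a_1=0$. The goal is to prove $a_1=a_3=0$, from which $a\in Re_4$ follows at once.

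To obtain $a_1=0$ I would compute $e_6 a$ using the multiplication table of $W_2$. Writing $e_6(e_1+e_5)=2e_3-e_6$, $e_6(e_3+e_6)=3e_4$, and $e_6 e_4=0$, the hypothesis $e_6 a=0$ forces the coefficient of $e_6$ in $e_6 a$, which is $-a_1$, to vanish, so $a_1=0$ (the coefficient of $e_3$, which is $2a_1$, also vanishes, and the coefficient of $e_4$ gives the auxiliary relation $3a_3=0$, but this is not needed). With $a_1=0$ already established, I would then turn to $e_5 a$. Since $e_5(e_3+e_6)=-e_3-e_6$ and $e_5 e_4=0$, we get $e_5 a=-a_3 e_3-a_3 e_6=0$, whence $a_3=0$ directly from the coefficient of $e_6$.

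There is no real obstacle here: both conditions are used just to pick off one scalar each, and the component $a_4 e_4$ is invisible to left multiplication by any element of $W_2(R)$ since $e_4$ lies in the left annihilator and in the right annihilator as well (rows and columns of $e_4$ in the multiplication table are zero). The only mildly delicate point is that the conclusion $a_1=0$ is extracted from the coefficient of $e_6$ rather than that of $e_3$, so that no assumption on $\tor_2(R)$ is required; the condition $3a_1=0$ coming from (\ref{rodaluna}) plays no role in the argument. Hence $a=a_4 e_4\in R e_4$, as claimed.
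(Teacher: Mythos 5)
Your proof is correct and follows essentially the same route as the paper's: both decompose $a=\alpha(e_1+e_5)+\beta(e_3+e_6)+\gamma e_4$ via equation \eqref{rodaluna}, then read off $\alpha=0$ from the $e_6$-coefficient of $e_6a=2\alpha e_3-\alpha e_6+3\beta e_4$ and $\beta=0$ from $e_5a=-\beta e_3-\beta e_6$. Your remark that taking the $e_6$-coefficient (rather than the $e_3$-coefficient, which only gives $2\alpha=0$) avoids any hypothesis on $\tor_2(R)$ is a nice explicit observation, but it is the same computation the paper performs implicitly.
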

\begin{proof}
Write $a=\a(e_1+e_5) + \b(e_3 + e_6) + \g e_4$ with $\a,\b,\g\in R$, $3\a=0$. Then $0=e_6 a=2 \a e_3-\a e_6+3 \b e_4$ so $\a=0$. But $0=e_5 a=-\beta  e_3-\beta  e_6+3 \beta  e_4 t$ and consequently also $\b=0$. Thus $a=\g e_4$.
\end{proof}

In case $\tor_3(R)=0$ and $\frac{1}{2}\in R$ or $\tor_2(R)=R$ we have proved
that $\theta(e_5)=e_5+re_6$, $\theta(e_6)=t e_6$, $r,t\in R$, $t\in R^\times$.
We can apply Lemma \ref{samortoy} taking $a=\theta(e_4)$ for any $\theta\in\aut(W_2(R))$. This implies that $\theta(e_4)=s e_4$ for some invertible $s\in R$. So far, when $\tor_3(R)=0$ and either $\frac{1}{2}\in R$ or $\tor_2(R)=R$, the matrix of an automorphism of $W_2(R)$ is of the form:
\begin{equation}
\begin{pmatrix}
  \begin{matrix}
  * & \rvline & * \\
  \hline
  \begin{matrix}
  0 & 0 & 0
  \end{matrix} & \rvline & s
  \end{matrix}
  & \rvline & \begin{matrix} * & *\\
  0 & 0\end{matrix}\\
\hline
  \bigzero & \rvline &
  \begin{matrix}
  1 & r \\
  0 & t
  \end{matrix}
\end{pmatrix}, 
\end{equation}
with $r,s,t\in R$, $s,t\in R^\times$.

We now investigate the image of $e_3$ under automorphisms of $W_2(R)$.
\begin{Lem}\label{edrem}
Assume $x\in W_2(R)$ satisfies $x e_4=x e_6=0$, $x e_5=t e_6$ ($t\in R^\times$) and $e_6 x\in R e_4$. 
Then if $\tor_3(R)=0$ we have $x\in Re_3+Re_4+Re_6$.
\end{Lem}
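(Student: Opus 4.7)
The plan is to write $x = \sum_{i=1}^6 x_i e_i$ with $x_i \in R$ and read off the vanishing of the coefficients $x_1, x_2, x_5$ directly from the two equalities $xe_4 = 0$ and $xe_6 = 0$, using only the multiplication table of $W_2$ and the assumption $\tor_3(R) = 0$. The remaining hypotheses $xe_5 = te_6$ and $e_6 x \in Re_4$ will turn out not to intervene in this direction; they presumably appear in the statement because they are the conditions that hold naturally when the lemma is invoked with $x = \theta(e_3)$ for an automorphism $\theta$, and will become useful in the next steps of the broader calculation.

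First I would compute $xe_4$. Inspecting the column of $e_4$ in the multiplication table, the only nonzero products of the form $e_i e_4$ are $e_1 e_4 = 3 e_4$ and $e_2 e_4 = e_3$, whence $xe_4 = x_2 e_3 + 3 x_1 e_4$. Imposing $xe_4 = 0$ forces $x_2 = 0$ and $3 x_1 = 0$, and the hypothesis $\tor_3(R) = 0$ then yields $x_1 = 0$. With $x_1 = x_2 = 0$ now in hand, I would compute $xe_6$: since $e_3 e_6 = e_4 e_6 = e_6 e_6 = 0$ and $e_5 e_6 = -e_6$, this reduces to $xe_6 = -x_5 e_6$, and $xe_6 = 0$ gives $x_5 = 0$. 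Consequently $x = x_3 e_3 + x_4 e_4 + x_6 e_6 \in R e_3 + R e_4 + R e_6$, as required.

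There is no genuine obstacle: both steps are immediate from the multiplication table, and the only place where the torsion hypothesis $\tor_3(R) = 0$ is used is in cancelling the factor $3$ in front of $x_1$. The apparent redundancy of the two unused hypotheses is only apparent; when $x = \theta(e_3)$ the identity $xe_5 = te_6$ will later translate, via the conclusion just proved, into the relation $x_3 - x_6 = t$, while $e_6 x = 3 x_3 e_4$ is automatically in $Re_4$ once $x$ is written in the reduced form above.
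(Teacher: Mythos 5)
Your proof is correct, and the computations match the multiplication table: $xe_4 = 3x_1e_4 + x_2e_3$ gives $x_2 = 0$ and $3x_1 = 0$, and once $x_1 = x_2 = 0$ the product $xe_6 = -x_5e_6$ gives $x_5 = 0$. The approach is the same coordinate computation as in the paper, but the logical route differs in which hypotheses do the work. The paper never cancels the $3$ in $3x_1 = 0$; instead it records $x_5 = x_1$ from $xe_6 = 0$, notes $-x_1 - 2x_5 = 0$ from $xe_5 = te_6$, and then kills $x_1$ using the fourth hypothesis: $e_6x = 2x_1e_3 + 3x_3e_4 - x_1e_6 \in Re_4$ forces $x_1 = 0$, hence $x_5 = 0$. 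So the paper's argument in fact does not use $\tor_3(R) = 0$ at this point, whereas yours uses it essentially and in exchange never touches $xe_5 = te_6$ or $e_6x \in Re_4$. Each version thus exposes a different redundancy in the hypotheses; yours is the more economical (the conclusion already follows from $xe_4 = xe_6 = 0$ over a $3$-torsion-free ring), and your closing remark is also accurate: when the lemma is applied to $x = \theta(e_3)$, the relation $x_3 - x_6 = t$ coming from $xe_5 = te_6$ and the identity $e_6x = 3x_3e_4$ are exactly what the subsequent steps of the paper's automorphism computation rely on.
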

\begin{proof}
If $x=\sum_{i=1}^6 x_i e_i$ then from $x e_4=0$ we get $x_2=3 x_1=0$. 
But $x e_6=0$ gives $x_2=0$ as before and $x_5=x_1$. On the other hand
$x e_5=t e_6$ gives $-x_1 - 2 x_5=0$, 
$ x_3- x_6 - t=0$. So $3x_5=0$ and $x_3-x_6\in R^\times$. Also 
$e_6 x=2x_1 e_3 -x_1e_6 +3 x_3e_4$ giving $x_1=0$. Thus $x=x_3e_3+x_4e_4+x_6e_6$
\end{proof}

\begin{Lem}\label{euqelad}
Assume $\tor_3(R)=0$ and let $u=\sum_{i=1}^6\l_i e_i\in W_2(R)$ be such that $u^2=0$, $\ll_1(u)=0$, $u(e_5+r e_6)=0$, $t u e_6=-(e_5+r e_6)$ then 
\begin{enumerate}
    \item  $\lambda_2\in R^\times$ and $u=\frac{\lambda _1^3}{\lambda _2^2}e_4 +\frac{\lambda_1^2}{\lambda _2}e_3+\lambda_1 e_1 +
   \lambda_2 e_2$.
   \item $r=-\l_1 t$.
   \item $\ll_1(u(x_3 e_3+x_4 e_4+x_6 e_6))=-9\l_2 x_6$.
\end{enumerate}
\end{Lem}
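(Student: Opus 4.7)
The plan is to interpret each hypothesis in coordinates and let the multiplication table do the work. Writing $u=\sum_{i=1}^{6}\l_i e_i$, the first step is to exploit $\ll_1(u)=0$. Since $\ll_1(w)=9\l_5$ and $\tor_3(R)=0$ implies $\tor_9(R)=0$ (if $9a=0$ then $3(3a)=0$ forces $3a=0$, so $a=0$), we immediately get $\l_5=0$. From now on $u$ has only five coordinates to track.

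Next I would translate the two linear conditions on $u$. A direct reading of the table yields
\[
u\, e_5=-\l_1 e_5+(\l_3-\l_6)e_6,\qquad u\, e_6=-\l_2 e_5+\l_1 e_6.
\]
The equation $t\,u\,e_6=-(e_5+re_6)$ then gives $t\l_2=1$ (so $\l_2\in R^\times$ with $t=\l_2^{-1}$) and $t\l_1=-r$, which is exactly item (2). Expanding $u(e_5+re_6)=0$ gives the two relations $\l_1+r\l_2=0$ (already implicit in the previous step) and $\l_3-\l_6+r\l_1=0$.

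The heart of the computation is $u^2=0$. After a careful expansion of $u\cdot u$ using the multiplication table, reading off the coefficients of each basis vector produces the system
\begin{align*}
e_1:&\quad -\l_1^2+\l_2(\l_3+\l_6)=0,\\
e_3:&\quad -\l_1\l_3+\l_2\l_4+2\l_1\l_6=0,\\
e_4:&\quad 3\bigl(\l_1\l_4-\l_3^{\,2}+\l_3\l_6\bigr)=0,\\
e_5:&\quad -\l_2\l_6=0,\\
e_6:&\quad \l_1\l_6=0.
\end{align*}
Since $\l_2\in R^\times$ the $e_5$-equation forces $\l_6=0$. The $e_1$-equation then gives $\l_3=\l_1^2/\l_2$, and substituting into the $e_3$-equation (and multiplying by $t=\l_2^{-1}$) yields $\l_4=\l_1^3/\l_2^{\,2}$; the $e_4$-equation becomes automatic (after cancelling the $3$ using $\tor_3(R)=0$). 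This establishes item (1).

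Finally, for item (3), I would simply compute $u(x_3 e_3+x_4 e_4+x_6 e_6)$ using $u=\l_1 e_1+\l_2 e_2+\l_3 e_3+\l_4 e_4$: the only product among the relevant $e_ie_j$ that contributes to the $e_5$-coordinate is $e_2e_6=-e_5$, giving an $e_5$-coefficient of $-\l_2 x_6$, whence $\ll_1(u(x_3 e_3+x_4 e_4+x_6 e_6))=-9\l_2 x_6$. The only genuine obstacle is the bookkeeping in the $u^2=0$ expansion; once the $e_5$ and $e_6$ coefficients are seen to annihilate $\l_6$, every remaining coefficient collapses cleanly under multiplication by $t$.
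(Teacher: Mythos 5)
Your proposal is correct and follows essentially the same route as the paper's proof: kill $\l_5$ via $\ll_1(u)=0$ and $3$-torsion-freeness, extract $t\l_2=1$ (hence $\l_2\in R^\times$) and $r=-t\l_1$ from the linear conditions, use the $e_5$-coefficient of $u^2=0$ to force $\l_6=0$, then read off $\l_3=\l_1^2/\l_2$ and $\l_4=\l_1^3/\l_2^2$ from the remaining coefficients and verify item (3) by direct computation. The only quibble is cosmetic: the $e_4$-coefficient $3(\l_1\l_4-\l_3^2+\l_3\l_6)$ vanishes identically once $\l_6=0$, $\l_3=\l_1^2/\l_2$, $\l_4=\l_1^3/\l_2^2$ are substituted, so no cancellation of the factor $3$ is actually needed there.
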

\begin{proof}
Since $0=\ll_1(u)=3\l_5$ we have $\l_5=0$. The fact that $u(e_5+e_6)=0$ gives $\l_1+r \l_2=0=r\l_1+\l_3-\l_6$. Since $t u e_6=-(e_5+r e_6)$ we get the equalities $t\l_2=1$ and $t\l_1=-r$. Thus $\l_2$ is invertible. Also since $u^2=0$ we have $\l_2\l_6=0$ which implies $\l_6=0$. Then $u=\sum_{i=1}^4\l_i e_i$ and $\ell_1(u (x_3e_3+x_4e_4+x_6e_6))=-9\l_2 x_6$ is readily checked. Now imposing the condition that $u^2=0$ gives
$\l_3=\l_1^2/\l_2$ and $\l_4=\l_1^3/\l_2^2$. 
\end{proof}

Assume again $\tor_3(R)=0$, $\frac{1}{2}\in R$ and $\theta\in\aut_R(W_2(R))$. We know that $\theta(e_6)=t e_6$, $\theta(e_5)=e_5+r e_6$, $\theta(e_4)=s e_4$. 
Applying Lemma \ref{edrem} we know that 
$\theta(e_3)=x_3e_3+x_4e_4+x_6e_6$ for some $x_i\in R$. 
Applying Lemma \ref{euqelad} we have $\theta(e_2)=\frac{\lambda _1^3}{\lambda _2^2}e_4 +\frac{\lambda_1^2}{\lambda _2}e_3+\lambda_1 e_1 +
   \lambda_2 e_2$ for suitable $\l_i\in R$ with $\l_2$ invertible.  
   Furthermore, since $e_2e_3=2 e_1$ we have $\theta(e_2)\theta(e_3)=2\theta(e_1)$ hence \begin{center}$-9\l_2x_6=\ell_1(\theta(e_2)\theta(e_3))=2\ell_1(\theta(e_1))=2\ell_1(e_1)=0$.\end{center} 
   Since $\l_2$ is invertible we get $x_6=0$ (so $\theta(e_3)\in R e_3+R e_4$). Finally 
 \begin{center}  $\theta(e_1)=-\theta(e_3)\theta(e_2)=\frac{3 \lambda_1^2 x_3}{\lambda _2}e_4+2 \lambda_1 x_3 e_3 +\lambda_2 x_3 e_1$.\end{center}
   In conclusion we have 
   \begin{Lem}\label{oiverp}
   If $\tor_3(R)=0$ and either $\frac{1}{2}\in R$ or $\tor_2(R)=R$ the matrix of $\theta\in\aut_R(W_2(R))$ in the basis of the $e_i$'s is of the form:
   \begin{equation}\label{seisporseis}
       \begin{pmatrix} \l_2 x_3 & 0 & 2\l_1 x_3 &\frac{3 \lambda_1^2 x_3}{\lambda_2} & 0 & 0\\
       \l_1 & \l_2 & \frac{\l_1^2}{\l_2} & \frac{\l_1^3}{\l_2^2} & 0 & 0\\ 
       0 & 0 & x_3 & x_4 & 0 & 0\\
       0 & 0 & 0 & s & 0 & 0\\
       0 & 0 & 0 & 0 & 1 & -\l_1 t\\
       0 & 0 & 0 & 0 & 0 & t\\
       \end{pmatrix}
   \end{equation}
   \end{Lem}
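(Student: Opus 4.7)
The plan is to apply Lemmas \ref{ecniuq}--\ref{euqelad} in succession to determine $\theta(e_i)$ for $i=6,5,4,3,2$, working outward from the left annihilator described in \eqref{rodaluna}, and then to recover $\theta(e_1)$ from the identity $e_3 e_2 = -e_1$. Each lemma has been tailored so that the image $\theta(e_i)$ satisfies its hypotheses as a direct consequence of the multiplication table of $W_2$ and the automorphism-invariance of the scalars $\ll_k$; the work therefore reduces to careful book-keeping of the invertible scalars produced at each step.

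First I would feed the pair $(\theta(e_5),\theta(e_6))$ into Lemma \ref{elady}: the identities $e_5^2=-2e_5$, $e_6^2=0$, $e_5e_6=e_6e_5=-e_6$, together with the invariants $\ll_1(e_5)=9$ and $\ll_1(e_6)=0$, are all preserved by $\theta$, so the lemma yields $\theta(e_5)=e_5+re_6$ and $\theta(e_6)=te_6$ with $t\in R^\times$ (invertibility of $t$ following from the surjectivity of $\theta$). Next, $\theta(e_4)$ lies in the left annihilator \eqref{rodaluna}; applying $\theta$ to $e_5e_4=0$ and $e_6e_4=0$ gives $(e_5+re_6)\theta(e_4)=0$ and $te_6\,\theta(e_4)=0$, whence invertibility of $t$ yields $e_6\theta(e_4)=e_5\theta(e_4)=0$. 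Lemma \ref{samortoy} then forces $\theta(e_4)=se_4$ with $s\in R^\times$.

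Next, transporting $e_3e_4=0$, $e_3e_6=0$, $e_3e_5=e_6$, $e_6e_3=-3e_4$ through $\theta$ and cancelling the invertible $s,t$ puts $x:=\theta(e_3)$ in the hypothesis of Lemma \ref{edrem}, producing $\theta(e_3)=x_3 e_3+x_4 e_4+x_6 e_6$. Similarly, for $u:=\theta(e_2)$ the relations $e_2^2=0$, $\ll_1(e_2)=0$, $e_2e_5=0$, and $e_2e_6=-e_5$ translate (using the constants $r,t$ already fixed) into exactly the hypotheses of Lemma \ref{euqelad}, which delivers $\theta(e_2)$ in the form asserted there, with $\lambda_2\in R^\times$.

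The final step is to kill $x_6$ and assemble $\theta(e_1)$. Applying $\theta$ to $e_2 e_3 = 2e_1$ and using that $\ll_1$ is invariant with $\ll_1(e_1)=0$, together with part (3) of Lemma \ref{euqelad}, yields $-9\lambda_2 x_6 = \ll_1(\theta(e_2)\theta(e_3)) = 2\ll_1(\theta(e_1)) = 0$; invertibility of $\lambda_2$ combined with $\tor_3(R)=0$ then forces $x_6=0$. Finally, $\theta(e_1)=-\theta(e_3)\theta(e_2)$ is computed by a direct expansion in the basis $\{e_i\}$, supplying the first row of \eqref{seisporseis}. The main obstacle, though modest in this framework, is verifying that the hypothesis sets of Lemmas \ref{edrem} and \ref{euqelad} are actually in place at the moment of invocation, since these mix the constants $r,s,t$ obtained in earlier steps; once this is tracked carefully, the matrix \eqref{seisporseis} falls out mechanically.
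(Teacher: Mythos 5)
Your proposal retraces the paper's own proof almost step for step: Lemma \ref{elady} applied to the pair $(\theta(e_5),\theta(e_6))$, Lemma \ref{samortoy} applied to $\theta(e_4)$, Lemma \ref{edrem} to $\theta(e_3)$, Lemma \ref{euqelad} to $\theta(e_2)$, then the invariant $\ll_1$ together with $e_2e_3=2e_1$ to force $x_6=0$, and finally $\theta(e_1)=-\theta(e_3)\theta(e_2)$. However, there is one genuine gap, and it sits exactly in one of the two cases the statement claims to cover.

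You assert that Lemma \ref{elady} ``yields $\theta(e_5)=e_5+re_6$.'' That is its conclusion only when $\frac{1}{2}\in R$. In the other case, $\tor_2(R)=R$, the lemma concludes only that $\theta(e_5)=se_4+e_5+re_6$ and $\theta(e_6)=te_6$: the possible $e_4$-component is not excluded. This term cannot be ignored, because it propagates into your later invocations: with $\theta(e_5)=se_4+e_5+re_6$, transporting $e_2e_5=0$ and $e_2e_6=-e_5$ through $\theta$ gives, for $u=\theta(e_2)$, the relations $u(se_4+e_5+re_6)=0$ and $t\,ue_6=-(se_4+e_5+re_6)$; since $e_4$ is not a right annihilator of $W_2(R)$ (indeed $e_1e_4=3e_4$ and $e_2e_4=e_3$), these are \emph{not} the hypotheses $u(e_5+re_6)=0$ and $t\,ue_6=-(e_5+re_6)$ of Lemma \ref{euqelad}, so your claim that the relations ``translate into exactly the hypotheses'' of that lemma is unjustified in the $2$-torsion case. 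The paper closes this gap with a short argument you need to add: applying $\theta$ to $e_2e_6=-e_5$ gives $\theta(e_5)=-t\,\theta(e_2)e_6$, so $\theta(e_5)$ lies in the image of right multiplication by $e_6$, and by the multiplication table that image is contained in $Re_5\oplus Re_6$; hence $s=0$, and only then does $\theta(e_5)=e_5+re_6$ hold in both cases. (A cosmetic slip besides: $e_6e_3=3e_4$, not $-3e_4$; this is harmless since Lemma \ref{edrem} only needs $e_6\theta(e_3)\in Re_4$.) With this one insertion your argument is complete and coincides with the paper's.
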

The algebra $W_2(R)$ contains $\oplus_{i=1}^4 R e_i=(\Sa_2)_R$ as a subalgebra. Also $W_2(R)\supset Re_5\oplus R_6$ also as a subalgebra. Then, under the assumptions of Lemma \ref{oiverp} we know that any automorphism $\theta\in\aut_R(W_2(R))$ preserves both subalgebras 
$\theta((\Sa_2)_R)=(\Sa_2)_R$ and $\theta(Re_5+Re_6)=Re_5+Re_6$. The map
$$\aut\nolimits_R(W_2(R))\to\aut((\Sa_2)_R)$$ such that 
$\theta\mapsto\theta\vert_{(\Sa_2)_R}$ is a group homomorphism. In fact it is a monomorphism because in
case $\theta\vert_{(\Sa_2)_R}=1$ we have $\l_1=0$ (see equation \eqref{seisporseis}). Thus if $\theta$ fixes $e_1,\ldots,e_4$ then also $\theta(e_5)=e_5$. Moreover since $e_3e_5=e_6$ then $\theta$ fixes also $e_6$ whence $\theta=1$.
\begin{prop}\label{algomayor}
If $\frac{1}{3},\frac{1}{2}\in R$ or $\frac{1}{3}\in R$, $2R=0$, the map
$\aut\nolimits_R(W_2(R))\to\aut((\Sa_2)_R)$ such that 
$\theta\mapsto\theta\vert_{(\Sa_2)_R}$ is a group isomorphism.
\end{prop}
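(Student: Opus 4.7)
The map is a group homomorphism by functoriality, and injectivity has been settled in the paragraph preceding the statement: if $\theta\vert_{(\Sa_2)_R}=\Id$ then $\l_1=0$ in the matrix \eqref{seisporseis}, so $\theta(e_5)=e_5$, and hence $\theta(e_6)=\theta(e_3 e_5)=e_3 e_5=e_6$. Thus the plan concentrates entirely on surjectivity.

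Fix an arbitrary $\sigma\in\aut((\Sa_2)_R)$. By Proposition \ref{reima} it equals $w(\l,\m)$ from \eqref{tsrifeht} under the first hypothesis, or $w_2(\l,\m)$ from \eqref{dnoceseht} under the second. I will exhibit an explicit $6\times 6$ matrix $\tilde w(\l,\m)$ satisfying both $\tilde w(\l,\m)\in\aut_R(W_2(R))$ and $\tilde w(\l,\m)\vert_{(\Sa_2)_R}=\sigma$. The form of $\tilde w(\l,\m)$ is forced by Lemma \ref{oiverp}: matching the top-left $4\times 4$ block of \eqref{seisporseis} with the matrix of $\sigma$ determines $\l_1,\l_2,x_3,x_4,s$ uniquely as functions of $(\l,\m)$, and applying a putative extension to the single relation $e_3 e_5=e_6$ forces $t=x_3=1/\l$. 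So there is a unique candidate $\tilde w(\l,\m)$, and surjectivity is reduced to verifying that it preserves the multiplication of $W_2(R)$.

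The verification decomposes naturally. Products internal to $(\Sa_2)_R$ are preserved automatically because $\sigma$ is already an automorphism there; the submodule $Re_5\oplus Re_6$ is a subalgebra and is stabilised by $\tilde w(\l,\m)$ via the $2\times 2$ block $\bigl(\begin{smallmatrix}1 & -\l_1 t\\ 0 & t\end{smallmatrix}\bigr)$, so the four internal relations $e_5^2=-2e_5$, $e_5 e_6=e_6 e_5=-e_6$, $e_6^2=0$ reduce to an immediate check. What remains is the sixteen mixed products $e_i e_j$ and $e_j e_i$ with $i\in\{1,2,3,4\}$ and $j\in\{5,6\}$, and this will be the main obstacle: the arithmetic is not deep but the bookkeeping is long and splits over the two hypotheses on $R$. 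A natural shortcut is to verify the automorphism property separately for the two simpler one-parameter families $\tilde w(\l,0)$ and $\tilde w(1,\m)$, and then recover the general case from the matrix identity $\tilde w(\l,0)\,\tilde w(1,\m')=\tilde w(\l,\m)$ (with $\m'=\m$ or $\m/\l$ according to the case, matching the group law in $\Aff_2(R)$ established in Proposition \ref{reima}). The matrix $\tilde w(\l,0)$ is diagonal with entries $(1,\l,1/\l,1/\l^2,1,1/\l)$, and preserving multiplication amounts to $\alpha_i\alpha_j=\alpha_k$ for every non-zero structure constant $c_{ij}^k$, which is transparent from the table; $\tilde w(1,\m)$ is unipotent on each invariant subalgebra and produces short direct checks. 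Once both families are handled, the general case follows and surjectivity is proved.
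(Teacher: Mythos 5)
Your proposal is correct and follows essentially the same route as the paper: injectivity is quoted from the discussion preceding the statement, and surjectivity is proved by extending $w(\lambda,\mu)$ (resp.\ $w_2(\lambda,\mu)$) to $W_2(R)$ by exactly the formulas the paper uses, namely $e_5\mapsto e_5-\frac{\mu}{2}e_6$ (resp.\ $e_5\mapsto e_5+\frac{\mu}{\lambda}e_6$) and $e_6\mapsto\frac{1}{\lambda}e_6$. The only difference is presentational: where the paper simply asserts ``it can be checked'' that the extension is an automorphism, you derive the candidate's uniqueness from Lemma \ref{oiverp} together with the relation $e_3e_5=e_6$, and organize the verification through the factorization $\tilde w(\lambda,0)\,\tilde w(1,\mu')=\tilde w(\lambda,\mu)$, which is a sound way of carrying out the check the paper leaves implicit.
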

\begin{proof}
It only remains to prove that the map is an epimorphism.
So if $\frac{1}{3},\frac{1}{2}\in R$, take an arbitrary $f\in\aut_R((\Sa)_R)$ whose matrix relative to the basis of the $e_i$'s is given in \eqref{tsrifeht}. Define next $\hat f\colon W_2(R)\to W_2(R)$ whose restriction to $(\Sa_2)_R$ is $f$ and $f(e_5)=e_5-\frac{\m}{2} e_6$,
$f(e_6)=\frac{1}{\l}e_6$. It can be checked that 
$\hat f\in\aut_R(W_2(R))$ and $\hat f\vert_{(\Sa_2)_R}=f$. Now in case  $\frac{1}{3}\in R$ and $2R=0$ take an arbitrary $f\in\aut_R((\Sa)_R)$ whose matrix relative to the basis of the $e_i$'s is given in 
\eqref{dnoceseht}. Then extend $f$ to the automorphism $\hat f$ of $(W_2)_R$ such that $\hat f(e_5)=e_5+\frac{\m}{\l}e_6$ and $\hat f(e_6)=\frac{1}{\l}e_6$.
\end{proof}

 If $\ch(\F)\ne 3$ we can describe now the affine group scheme $\affaut(W_2)$. As before $\alg_{\Fs}$ will be the category of associative, commutative and unital $\F$-algebras and $\grp$ that of groups. Then $\affaut(W_2)$ is the group functor 
$\affaut(W_2)\colon\alg_\Fs\to\grp$ such that $R\mapsto \aut_R((W_2)_R)$ (as usual $(W_2)_R:=W_2\otimes_\Fs R$ is the scalar extension algebra).
If $\ch(\F)\ne 2,3$ then $\frac{1}{2},\frac{1}{3}\in R$ for any $R\in\alg_\Fs$. If $\ch(\F)=2$ then $2R=0$ for any $R\in\alg_\Fs$ but $\frac{1}{3}\in R$. So in any case we can apply Proposition \ref{algomayor} to $R\in\alg_\Fs$ to compute the affine group scheme $\affaut(W_2(R))$. 
Denote by $\hbox{\bf Aff}_2\colon\alg_\Fs\to\grp$ the group functor
such that $R\mapsto \Aff_2(R)=\tiny\begin{pmatrix}1 & R\cr 0 & R^\times\end{pmatrix}$. Then we claim
\begin{Th}\label{Lemma}
For a field $\F$ with $\ch(\F)\ne 3$, there is an isomorphism of group schemes $$\affaut(W_2)\cong\hbox{\bf Aff}_2.$$ 
\end{Th}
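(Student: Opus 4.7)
The plan is to stack two natural isomorphisms: the restriction isomorphism provided by Proposition \ref{algomayor}, and the isomorphism of group schemes $\affaut(\Sa_2)\cong\hbox{\bf Aff}_2$ of Theorem \ref{analuf}. First I would verify that for every $R\in\alg_\Fs$ the hypotheses of Proposition \ref{algomayor} are met. Since $\F$ embeds in $R$ and $\ch(\F)\ne 3$, we have $\frac{1}{3}\in\F\subset R$; if moreover $\ch(\F)\ne 2,3$ then $\frac{1}{2}\in R$, while if $\ch(\F)=2$ then $2R=0$. Thus in either subcase Proposition \ref{algomayor} applies, yielding a group isomorphism
\[
\rho_R\colon \aut_R((W_2)_R)\stackrel{\sim}{\longrightarrow} \aut_R((\Sa_2)_R),\qquad \theta\mapsto\theta|_{(\Sa_2)_R}.
\]

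Next I would define the candidate isomorphism $\tau_R\colon \aut_R((W_2)_R)\to \Aff_2(R)$ as the composition $\tau_R:=\tau_R^{\Sa_2}\circ\rho_R$, where $\tau_R^{\Sa_2}$ is the isomorphism furnished by Theorem \ref{analuf}. Being a composition of group isomorphisms, each $\tau_R$ is itself a group isomorphism, so only naturality in $R$ remains to be checked.

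For naturality, given an $\F$-algebra homomorphism $\alpha\colon R\to S$, one must show that $\tau_S\circ\alpha_1=\alpha_2\circ\tau_R$, where $\alpha_1,\alpha_2$ are the evaluations of the functors $\affaut(W_2)$ and $\hbox{\bf Aff}_2$ on $\alpha$. Naturality of $\tau^{\Sa_2}$ is already part of Theorem \ref{analuf}, so it suffices to check naturality of $\rho$. This is essentially formal: the subalgebra $(\Sa_2)_R$ is the $R$-span of $e_1,\ldots,e_4$ inside $(W_2)_R$, and scalar extension along $\alpha$ sends $\theta\otimes_R 1$ to $(\theta\otimes_R 1)\otimes_R S$, which clearly commutes with restriction to the first four basis vectors.

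The main potential obstacle is making sure the inverse of $\rho_R$ is also natural in $R$, since it is constructed by extending an automorphism $f$ of $(\Sa_2)_R$ to $\hat f$ on $(W_2)_R$ through an explicit prescription on $e_5,e_6$. But the proof of Proposition \ref{algomayor} exhibits $\hat f(e_5)$ and $\hat f(e_6)$ as polynomial expressions in the parameters $\lambda,\mu$ of $f$, and polynomial formulas with $\F$-coefficients commute with any $\F$-algebra morphism $\alpha$. Hence $\rho^{-1}$ is natural, so $\rho$ and consequently $\tau$ are natural, which completes the proof.
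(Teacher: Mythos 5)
Your proposal is correct and follows essentially the same route as the paper: the paper deduces this theorem precisely by noting that for every $R\in\alg_\Fs$ the hypotheses of Proposition \ref{algomayor} hold (since $\frac{1}{2},\frac{1}{3}\in R$ when $\ch(\F)\ne 2,3$, and $2R=0$, $\frac{1}{3}\in R$ when $\ch(\F)=2$), and then composing the restriction isomorphism $\aut_R((W_2)_R)\cong\aut((\Sa_2)_R)$ with the isomorphism $\affaut(\Sa_2)\cong\hbox{\bf Aff}_2$ of Theorem \ref{analuf}. Your naturality verification (where, for the inverse, no separate argument is needed, since the inverse of a natural transformation with invertible components is automatically natural) simply makes explicit what the paper leaves implicit.
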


Any automorphism $f$ of $W_2(R)$ is of the form in \eqref{seisporseis} relative to the standard basis. We can refine its form a little. If we impose $f(e_ie_j)=f(e_i)f(e_j)$ for:
\begin{enumerate}
    \item $i=3, j=5$ we get $x_3=t$.
    \item $i=1, j=3$ we get $x_3\l_2=1$.
    \item $i=2, j=6$ we get $\l_2=1/t$.
    \item $i=6, j=3$ we get $s=t^2$.
    \item $i=5, j=3$ we get $x_4=3t^2\l_1$.
\end{enumerate}
Thus the form of a   general automorphism of $W_2(R)$ when $\tor_3(R)=0$ on canonical basis is 
{\tiny \begin{equation}\label{auttor3nul}
w(x,t):=\left(
\begin{array}{cccccc}
 1 & 0 & 2 t x & 3 t^2 x^2 & 0 & 0 \\
 x & \frac{1}{t} & t x^2 & t^2 x^3 & 0 & 0 \\
 0 & 0 & t & 3 t^2 x & 0 & 0 \\
 0 & 0 & 0 & t^2 & 0 & 0 \\
 0 & 0 & 0 & 0 & 1 & -t x \\
 0 & 0 & 0 & 0 & 0 & t \\
\end{array}
\right)
\end{equation}}
So $\aut(W_2(R))=\{w(x,t)\colon x\in R, t\in R^\times\}$ and we can observe that
$w(0,1)$ is the identity and that 
\begin{equation*}\label{grouplaws}
w(x,t)w(x',t')=w(x+\frac{x'}{t},tt') \hbox{ and } 
w(x,t)^{-1}=w(-tx,t^{-1})    
\end{equation*}
 for any $x,x',t,t'$.
\remove{
\begin{Lem}\label{zuricu}
Again under the hypothesis that $\tor_3(R)=0$ assume that $\{t-1,t x\}\subset R$ is $\F$-linearly independent. Then if $z=(z_1,\ldots,z_6)\in \F^6$ is an eigenvector of $w(x,t)\in W_2(R)$, then $z$ is a multiple of $e_4$. Consequently if $w(x,t)$ is diagonalizable $\{t-1,t x\}$ is $\F$-linearly dependent.
\end{Lem}
\begin{proof}
Our assumption is that if $\a,\b\in\F$ and $\a(t-1)+\b tx=0$ then $\a=\b=0$.
Suppose now that $z A=k z$. Then we get among others 
$$\begin{cases}(-k+\frac{1}{t})z_2=0\cr -k z_1+x z_2+z_1\end{cases}$$ so that if $z_2\ne 0$ we have $k=1/t$ and the second equation above gives: $(t-1)z_1+t x z_2=0$ contradicting the
hypothesis. Thus $z_2=0$ and if $z_1\ne 0$ we have $k=1$. But then the main equation $zA=z$
gives $2 t x z_1+t z_3-z_3=0$ which again contradicts our assumptions. Consequently 
also $z_1=0$. At this point the equation $zA=kz$ translates into 
$$t z_3=k z_3,-k z_4+3 t^2 x z_3+t^2 z_4=0, z_5-k z_5=0, -k z_6-t x z_5+t z_6=0.$$
So if $z_3\ne 0$ we have $t=k$ and $- z_4+3 t x z_3+t z_4=0$ (recall that $t$ is invertible).
Since this contradicts again our hypothesis we have $z_3=0$ and 
$$-k z_4+t^2 z_4=0, z_5-k z_5=0, -k z_6-t x z_5+t z_6=0.$$
Now if $z_5\ne 0$ we have $k=1$ and $(t-1)z_6-t x z_6=0$ which implies $z_6=0$, but then
$txz_5=0$ so that $tx=0$ a contradiction. Consequently $z_5=0$ and we have 
$(t^2-k) z_4=0, (t-k) z_6=0$. If $z_6\ne 0$ then $(t^2-t)z_4=0$ and so $t(t-1)=0$. Since
$t$ is invertible $t=1$ which contradicts that $\{t-1,tx\}$ is linearly independent.
Thus $z_6=0$ and we have $k=t^2$ which allows us to conclude that the unique eigen-element
is (up to multiples) $e_4$ associated to the eigenvalue $t^2$.
\end{proof}
Next, given that the matrix $w(0,t)$ is diagonal we will
assume that $w(x,t)$ is diagonalizable and $x\ne 0$. Then there are scalars $\a,\b\in\F$
not both zero such that $\a(t-1)+\b tx=0$. In this case we have $w(\a,1)w(x,t)w(\a,1)^{-1}=w(0,t)=\hbox{diag}(1,t^{-1},t,t^2,1,t)$ and this induces a
a grading $W_2=\oplus_{n\in\Z}(W_2)_n$ where the nonzero components of $W_2$ are 
\begin{equation}\label{grw2fine}
(W_2)_0=\F e_1\oplus\F e_5, (W_2)_1=\F e_3\oplus\F e_6,
(W_2)_{-1}=\F e_2,  (W_2)_2=\F e_4.
\end{equation}
\begin{Lem}\label{tontuna}
The $\Z$-grading induced on $W_2$ by diagonalization relative to $w(0,t)$
is the given in \eqref{grw2fine} if $t\ne\pm 1$ and $t$ is not a cubic root of the unit this grading is fine. Any other grading of $W_2$ is equivalent to a coarsening of this, that is, 
\begin{enumerate}
    \item The $\Z_2$-grading 
    $(W_2)_0=\F e_1\oplus\F e_4\oplus\F e_5$, $(W_2)_1=\F e_2\oplus\F e_3\oplus\F e_6$.
    \item The $\Z_3$-grading $(W_2)_0=\F e_1\oplus\F e_5$,
    $(W_2)_1=\F e_3\oplus\F e_6$, $(W_2)_{2}=\F e_2\oplus\F e_4$.
\end{enumerate}
\end{Lem}
\begin{proof} 
We can compute the centralizer in $\aut(W_2(R))$ of a toral element $w(0,a)$.
We have $w(x,y)w(0,a)=w(x,ya$ and $w(0,a)w(x,y)=w(\frac{x}{a},ay)$ so that $(a-1)x=0$. Since $$e_1 w(x,y)w(0,1)=e_1 w(0,a)w(x,y)=e_1 w(x,y)$$ we conclude that  $e_1w(x,y)$ is an eigen-element of eigenvalue $1$ relative to $w(0,a)$. But this implies that  $e_1w(x,y)=r e_1$ or $e_1w(x,y)=r e_5$ for some $r\in R$.
The first possibility gives $r=1$ and $x=0$ (recall that $y\in R^\times$ and
$\tor_3(R)=0$). The second gives a contradiction. Thus the centralizer
in $\aut(W_2(R))$ of $w(0,a)$ is the torus $\{w(0,y)\colon y\in R^\times\}$.
This implies that the grading \eqref{grw2fine} is fine. Consider now any grading
$\Gamma$ of $W_2$ by an abelian group $G$, and the induced homomorphism of affine group schemes $\rho\colon\hom(\F G,\_)\to\affaut(W_2)$. For any $R\in\alg_\F$ and
any $\chi\in\hom(\F G,R)$,
the automorphism $\rho_R(\chi)$ is diagonalizable relative to a basis $B\otimes 1$ where $B$ is a basis of $W_2$ (the notation $B\otimes 1$ stands for 
$\{b\otimes 1\colon b\in B\}$). So $\rho_R(\chi)$ can be taken to be $w(0,a)$
and therefore the grading $\Gamma$ is equivalent to a coarsening of \eqref{grw2fine}. To see that any coarsening of this grading is one of the given above, take into account that any group homomorphism $\Z\to G$
with $1\mapsto g$ such that the order of $g$ in $G$ is $>3$ does not provide a proper coarsening.
\end{proof}
}
\subsubsection{The case of characteristic $3$.}
In order to investigate the case of characteristic $3$ we will need to note that if $3R=0$ we have $W_2(R)^2=Re_1\oplus Re_3\oplus Re_5\oplus Re_6$.
So for $i=1,3,5,6$ and any $\theta\in\aut_R(W_2(R))$ we have
$\theta(e_i)\in Re_1\oplus Re_3\oplus Re_5\oplus Re_6$.
Now $\theta(e_5)=\l_1e_1+\l_3e_3+\l_5e_5+\l_6e_6$ and since $e_5^2=e_5$, applying $\theta$
we get $2\l_1^2+\l_1\l_5+2\l_1=0$. 
Now we will use the invariant $\ell_2$ (see section \ref{guaydel}).
We have $1=\ell_2(e_5)=\ell_2(\theta(e_5))= \lambda_1^2+ \lambda_5 \lambda_1+ \lambda_5^2$ so that $ \lambda_1^2+ \lambda_5 \lambda _1+ \lambda_5^2=1$. Now from 
$$\begin{cases} 2\l_1^2+\l_5\l_1+2\l_1=0\cr
\lambda _1^2+ \lambda _5 \lambda _1+ \lambda _5^2=1
\end{cases}$$
we get $2\l_1\l_5+2\l_1+\l_5^2=1$. Also from the equality $e_5^2=e_5$
applying $\theta$ we get $\l_5^2+2\l_1\l_5+2\l_5=0$ so that $\l_5=1+\l_1$. Thus we can write 
\begin{equation}\label{pedocom}
\theta(e_5)=\l_1e_1+\l_3e_3+(1+\l_1)e_5+\l_6e_6.
\end{equation}
On the other hand by equation \eqref{rodaluna} we can write $\theta(e_4)=\a(e_1+e_5)+\b(e_3+e_6)+\g e_4$ and since $\theta(e_5)\theta(e_4)=0$ after an easy calculation we get $\a=\b=0$ so that $\theta(e_4)=t e_4$ (with $t\in R^\times)$. Now we put $\theta(e_2)=\sum_{i=1}^6y_ie_i$ and $\theta(e_3)=z_1e_1+z_3e_3+z_5e_5+z_6e_6$, for scalars $y_i,z_j\in R$, and given that $\theta(e_2)\theta(e_4)=\theta(e_3)$ we get $$ty_2=z_3, z_1=z_5=z_6=0$$
so that $\theta(e_3)=ty_2 e_3$. Consequently $y_2\in R^\times$.
Now writing $\theta(e_1)=x_1e_1+x_3e_3+x_5e_5+x_6e_6$, since 
$\theta(e_3)\theta(e_1)=\theta(e_3)$ we get
$x_1=1$ and $x_5=0$ and so $\theta(e_1)=e_1+x_3e_3+x_6e_6$. Since $\theta(e_1)^2=2\theta(e_1)$ after expanding the corresponding equation we get $x_6=0$ so  $\theta(e_1)=e_1+x_3e_3$. Now $\theta(e_1)\theta(e_2)=0$ gives $$y_1=-x_3y_2, y_3=-x_3 y_1, y_5=0, y_6=0.$$
Thus $\theta(e_2)=-x_3y_2e_1+y_2e_2+x_3^2y_2e_3+y_4e_4$. Also $\theta(e_1)\theta(e_5)=2\theta(e_5)$ which implies
$$\lambda_1 x_3-\lambda_3=0=-\lambda_6+\lambda_1
x_3+x_3.$$
Moreover, $\theta(e_1)\theta(e_6)=\theta(e_6)$ and if we write 
$\theta(e_6)=\m_1e_1+\m_3e_3+\m_5e_5+\m_6 e_6$ we get
$$\m_1=\m_5=0.$$ 
Also $\theta(e_2)\theta(e_3)=2\theta(e_1)$ which implies
$t=1/y_2^2$. On the other hand the equality $\theta(e_2)^2=0$ gives 
$y_2(y_4+x_3^3y_2)=0$ and the invertibility of $y_2$ implies $y_4=-x_3^3y_2$.
Finally since $\theta(e_2)\theta(e_6)=2\theta(e_5)$ we get 
$\l_1=y_2\m_3$ and $\m_6=\frac{\m_3y_2+1}{y_2}$. Assambling all of this together we get to the matrix of and automorphism $\theta$ of $W_2(R)$:
{\tiny $$
\left(
\begin{array}{cccccc}
 1 & 0 & x_3 & 0 & 0 & 0 \\
 -x_3 y_2 & y_2 & x_3^2 y_2 & -x_3^3
   y_2 & 0 & 0 \\
 0 & 0 & \frac{1}{y_2} & 0 & 0 & 0 \\
 0 & 0 & 0 & \frac{1}{y_2^2} & 0 & 0 \\
 \mu _3 y_2 & 0 & \mu _3 x_3 y_2 & 0 &
   \mu _3 y_2+1 & \mu _3 x_3 y_2+x_3 \\
 0 & 0 & \mu _3 & 0 & 0 & \frac{\mu _3
   y_2+1}{y_2} \\
\end{array}
\right)
$$}
whose determinant is $\frac{\left(\mu _3
   y_2+1\right){}^2}{y_2^3}$. Hence $\m_3 y_2+1\in R^\times$. So using the parameters $a=1/y_2$, $c=x_3$, $b=\m_3/a+1$ the matrix of a general automorphism $\theta$ (in the basis of the $e_i$'s) is 
{\tiny\begin{equation}\label{qaesaev}
M_{a,b,c}:=\left(
\begin{array}{cccccc}
 1 & 0 & c & 0 & 0 & 0 \\
 -\frac{c}{a} & \frac{1}{a} &
   \frac{c^2}{a} & -\frac{c^3}{a} & 0 &
   0 \\
 0 & 0 & a & 0 & 0 & 0 \\
 0 & 0 & 0 & a^2 & 0 & 0 \\
 b-1 & 0 & (b-1) c & 0 & b & b c \\
 0 & 0 & a (b-1) & 0 & 0 & a b \\
\end{array}
\right)
\end{equation}}
where $a\in R^\times$, $c\in R$ and $b=y_2\m_3+1$ hence $b\in R^\times$.
In fact the set $\{M_{a,b,c}\colon a,b\in R^\times, c\in R\}$ is a group relative to matrix multiplications and obeys the rule:
$$M_{a,b,c}M_{a',b',c'}=M_{aa',bb',a'c+c'}$$
hence its identity is $M_{1,1,0}$ and also $M_{a,b,c}^{-1}=M_{a^{-1},b^{-1},-ca^{-1}}$. Thus 
\begin{equation}
\aut\nolimits_R(W_2(R))\cong\{M_{a,b,c}\colon a,b\in R^\times,c\in R\}
\end{equation}
Modulo the above identification we can see that the subset $\{M_{a,1,c}\colon a\in R^\times, c\in R\}$ is a normal subgroup of $\aut_R(W_2(R))$ and it is isomorphic to $\Aff_2(R)$. Of course the quotient group is isomorphic to the multiplicative group: $$\aut(W_2(R))/\Aff\nolimits_2(R)\cong R^\times.$$
Let us compute the center $Z(\aut_R(W_2(R)))$, in we consider the equality $M_{a,b,c}M_{x,y,z}=M_{x,y,z}M_{a,b,c}$ for a fixed triple $(a,b,c)\in R^\times\times R^\times\times R$ and an arbitrary one $(x,y,z)\in R^\times\times R^\times\times R$, we find that $cx+z=az+c$ hence taking $x=1$ we get $z=az$ for any $z\in R$. So $a=1$ and this implies $cx=c$ for any  $x\in R^\times$. Thus $c(-x)=c$ also. Consequently $2c=0$ and since $3R=0$ this implies $c=0$. Then $$Z(\aut\nolimits_R(W_2(R)))=\{M_{1,b,0}\colon b\in R^\times\}\cong R^\times.$$ 
Furthermore we have a decomposition $M_{x,y,z}=M_{1,y,0}M_{x,1,z}$ for any $M_{x,y,z}$. So we have an isomorphism $$\aut\nolimits_R(W_2(R))\cong R^\times\times \Aff\nolimits_2(R)$$
$$M_{x,y,z}\mapsto \left(y,\begin{pmatrix}1 & z\cr 0 & x\end{pmatrix}\right)$$
induced by the decomposition of $\aut_R(W_2(R))$ as a direct product of groups. Summarizing the previous results we have:
\begin{Th}\label{tramart}
If $3R=0$ we have an isomorphism $\aut_R(W_2(R))\cong R^\times\times\Aff_2(R)$
where $Z(\aut_R(W_2(R))$ corresponds with the first factor $R^\times$. Modulo the above identification, both subgroups $R^\times$ and $\Aff_2(R)$ are normal subgroups. The general form of an element $M_{a,b,c}\in\aut_R(W_2(R))$ is in equation \eqref{qaesaev} relative to the basis of the $e_i$'s.
\end{Th}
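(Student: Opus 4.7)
The plan is to determine the matrix of an arbitrary $\theta\in\aut_R(W_2(R))$ in the basis $\{e_i\}_{i=1}^6$ explicitly, and then read off the group-theoretic statements from the resulting parametrization. First I would exploit that when $3R=0$ one has $W_2(R)^2=Re_1\oplus Re_3\oplus Re_5\oplus Re_6$, so for $i\in\{1,3,5,6\}$ any automorphism must send $e_i$ into this submodule. Combining $e_5^2=e_5$ (which holds in characteristic $3$) with the invariance of the coefficient $\ell_2$ of the characteristic polynomial of $L_w$ under automorphisms, the element $\theta(e_5)$ gets pinned down to the form $\lambda_1 e_1+\lambda_3 e_3+(1+\lambda_1)e_5+\lambda_6 e_6$.

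Next I would use the description of the left annihilator of $W_2(R)$ given in \eqref{rodaluna} together with $\theta(e_5)\theta(e_4)=0$ to force $\theta(e_4)=t\,e_4$ with $t\in R^\times$. After that, the successive multiplicative identities $\theta(e_2)\theta(e_4)=\theta(e_3)$, $\theta(e_3)\theta(e_1)=\theta(e_3)$, $\theta(e_1)^2=-\theta(e_1)$, $\theta(e_1)\theta(e_2)=0$, $\theta(e_1)\theta(e_5)=-\theta(e_5)$, $\theta(e_1)\theta(e_6)=\theta(e_6)$, $\theta(e_2)\theta(e_3)=2\theta(e_1)$, $\theta(e_2)^2=0$, and finally $\theta(e_2)\theta(e_6)=-\theta(e_5)$ determine the remaining scalars one at a time, yielding exactly the matrix $M_{a,b,c}$ of \eqref{qaesaev} with $a,b\in R^\times$ and $c\in R$.

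Finally, a direct matrix computation verifies the composition law $M_{a,b,c}M_{a',b',c'}=M_{aa',bb',a'c+c'}$. From here everything is pure group theory: the subset with $b=1$ is a normal subgroup visibly isomorphic to $\Aff_2(R)$; imposing commutativity of $M_{a,b,c}$ with every $M_{x,y,z}$ quickly yields $a=1$ and $c=0$, so the center is $\{M_{1,b,0}\colon b\in R^\times\}\cong R^\times$; and the factorization $M_{x,y,z}=M_{1,y,0}\,M_{x,1,z}$ with both factors commuting (since the left one is central) realizes $\aut_R(W_2(R))$ as the internal direct product $R^\times\times\Aff_2(R)$.

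The main obstacle I expect is the careful bookkeeping in the long chain of multiplicativity identities above: the relations must be invoked in an order such that each new equation pins down further scalars without overdetermining the system, and one has to track the hidden condition that the parameter $b=y_2\mu_3+1$ lies in $R^\times$, which is ultimately enforced by the determinant of $M_{a,b,c}$ being a unit.
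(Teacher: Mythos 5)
Your proposal is correct and follows essentially the same route as the paper's own proof: the reduction via $W_2(R)^2$, the use of $e_5^2=e_5$ together with the invariant $\ell_2$, the left-annihilator argument forcing $\theta(e_4)=te_4$, the same chain of multiplicativity identities leading to $M_{a,b,c}$, and then the same group-theoretic bookkeeping (normal subgroup $\{b=1\}\cong\Aff_2(R)$, center $\{M_{1,b,0}\}\cong R^\times$ via the relation $cx+z=az+c$, and the factorization $M_{x,y,z}=M_{1,y,0}M_{x,1,z}$). The only detail worth making explicit is that in the center computation one gets $2c=0$, which kills $c$ precisely because $3R=0$.
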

\remove{
\begin{prop}\label{turcpal}
Assume that $M_{a,b,c}$ is diagonalizable and furthermore that there is a basis $B$ of $W_2(\F)$ such that $B\otimes 1$ diagonalizes $M_{a,b,c}$.
Then $a\ne 1$ and $c=0$. The basis diagonalizing $M_{a,b,0}$ is 
$\{e_1,e_2,e_3,e_4,e_1+e_5,e_3+e_6\}$ and their corresponding eigenvalues are (respectively) $1,\frac{1}{a},a,a^2,b$, and $ab$.
\end{prop}
\begin{proof}
Let us analyze first the case of $M_{1,1,c}$ we will show that there is not a basis of $W_2(\F)$ such that $M_{1,1,c}$ acts diagonally on this basis. It is easy to prove that the only possible eigenvalue of $M_{1,1,c}$ is $1$. But imposing the
condition $v M_{1,1,c}=v$ to a generic element $$v=(\lambda_i)_{i=1}^6\in\F^6$$ we get 
that $\lambda_1=\lambda_2=\lambda_5=0$. So there is no basis of
$W_2(\F)$ diagonalizing $M_{1,1,c}$.
Next we show that
there is no basis $B$ of $W_2(\F)$ diagonalizing $M_{1,b,c}$ (and we can assume $b\ne 1$).
To prove this, assume $v=(\lambda_i)_{i=1}^6\in\F^6$ such that 
$v M_{1,b,c}=k v$ for $k\in R$. 
Let us denote by $V_k$ the subspace of all $v\in W_2(\F)$ such that 
$vM_{1,b,c}=k v$.
Immediately we see that 
$$\begin{cases}(b-k) \lambda _5=0\cr b c \lambda _5+(b-k) \lambda _6 \end{cases}.$$ 
Thus, we analize two cases:
\begin{enumerate}
\item If $k\ne b$ we have $\lambda_5=0$ but then $(b-k)\lambda_6=0$
implying $\lambda_6=0$. The remaining equations are 
$$\begin{cases}
 2c\lambda_2+(1-k)\lambda_1=0,\cr 
(1- k)\lambda _2=0,\cr
c^2 \lambda _2+c \lambda _1+ (1-k)\lambda _3=0\cr 
2 c^3 \lambda _2+(1-k)\lambda _4=0.
\end{cases}$$
In case $k=1$ we have $\lambda_2=\lambda_1=0$ and 
$V_1=\F e_3\oplus\F e_4$.
If $k\ne 1$, we have $\lambda_2=\lambda_1=\lambda_3=\lambda_4=0$ so 
$V_k=0$. The conclusion
is that if $b\ne k=1$ there is a two-dimensional subspace of $W_2(\F)$ consisting of eigenvectors of $M_{1,b,c}$ for the eigenvalue $1$. 
\item If $k=b$ we have $b\lambda_5=0$ and since $k\ne 0$ we have 
$\lambda_5=0$. 
Since $k\ne 1$ we deduce $\lambda_1=\lambda_2=\lambda_4=\lambda_5=0$ and $\lambda_3=\lambda_6$ so there is only a one-dimensional subspace of eigenvectors of $W_2(\F)$
with eigenvalue $k\ne 1$. In other words $V_b=\F(e_3+e_6)$.
\end{enumerate}
Summarizing: for $M_{1,b,c}$ we have $\dim(V_1)=\dim(V_b)=2$, and $\dim(V_k)=0$ in other cases. Thus $M_{1,b,c}$ is not diagonalizable 
relative to a basis of $W_2(\F)$. 
Then we consider $M_{a,b,c}$ with $a\ne 1$. Take an $v=(\lambda_i)_{i=1}^6$ such that 
$v M_{a,b,c}=k v$ for $k\in R$. After a simple computation, this implies that $(b-k)\lambda_5=0$.
Since not every $v$ has $\lambda_5=0$ (because then the automorphism would not be diagonalizable), we have $b=k$. Now, the equations coming
from $v M_{a,b,c}=b v$ give $\lambda _2(1-a b)=0$ and since not every $v$ has $\lambda_2=0$ we get $ab=1$. We also get $\frac{c \lambda _5}{a}-\frac{\lambda _6}{a}+\lambda _6$ whence if $\lambda_5\ne 0$
we have $c=\frac{(1-a)\lambda_6}{\lambda_5}$. But putting this value of
$c$ in the sistem $vM_{a,b,c}=bv$ we get $\lambda _6(1-a)=0$ which gives
$\lambda_6=0$. So far we have an $v$ whose $\lambda_5\ne 0$ but $\lambda_6=0$ (and $ab=1$, $k=b$). With these values, the system $vM_{a,b,c}=b v$ gives $c=0$. Finally it is easy to see that the basis in the statement diagonalizes $M_{a,b,0}$ with the given eigenvalues. 
\end{proof}
From Proposition \ref{turcpal} we deduce the following $\Z^2$-grading of $W_2(\F)$ when $\ch(\F)=3$.
\begin{eqnarray}\label{grchar3}
W_2(\F)_{0,0}=\F e_1, & W_2(\F)_{-1,0}=\F e_2, & W_2(\F)_{1,0}=\F e_3,\cr  W_2(\F)_{2,0}=\F e_4, & W_2(\F)_{0,1}=\F (e_1+e_5), & W_2(\F)_{1,1}=\F (e_3+e_6).
\end{eqnarray}
In case the set $\{1,a,a^{-1},a^2,b, ab\}$ has cardinal less than $6$, we get some coarsenings of this grading.
}

\remove{
%
\begin{prop}
For any field $\F$ of characteristic $3$, the unique fine grading up to equivalence is the given in equation \eqref{grchar3}.
\end{prop}
}

\subsection{Conservative algebra $W(2)$}

A multiplication on the $2$-dimensional vector space $V_2$ is defined by a $2\times 2\times 2$ matrix. Their classification was given in many papers (see, for example,~\cite{kayvo2,GR11}).
Let us consider the space $W(2)$ of all multiplications on the 2-dimensional space $V_2$ with a basis $v_1,v_2$.  The definition of the multiplication $\cdot$  on the algebra $W(2)$ can be found in Introduction (see, also \cite{Kantor90,kaylopo,kayvo}). Namely, we fix the vector $v_1 \in V_2$ and define
$$(A \cdot B)(x,y)=A(v_1,B(x,y))-B(A(v_1,x),y)-B(x,A(v_1,y))$$
for $x,y \in V_2$ and $A,B \in W(2)$. The algebra $W(2)$ is conservative \cite{Kantor90}.

Let us consider the multiplications $\alpha_{ij}^k$ ($i,j,k=1,2$) on $V_2$ defined by the formula $\alpha_{ij}^k(v_t,v_l)=\delta_{it}\delta_{jl} v_k$ for all $t,l$. It is easy to see that $\{ \alpha_{ij}^k | i,j,k=1,2 \}$ is a basis of the algebra $W(2)$. The multiplication table of $W(2)$ in this basis is given in \cite{kaylopo}. In this work we use another basis for the algebra $W(2)$. Let introduce the notation
\begin{longtable}{llll}
$e_1=\alpha_{11}^1-\alpha_{12}^2-\alpha_{21}^2,$&
$e_2=\alpha_{11}^2,$& 
$e_3=\alpha_{22}^2-\alpha_{12}^1-\alpha_{21}^1,$&
$e_4=\alpha_{22}^1,$ \\
$e_5=2\alpha_{11}^1+\alpha_{12}^2+\alpha_{21}^2,$&
$e_6=2\alpha_{22}^2+\alpha_{12}^1+\alpha_{21}^1,$&
$e_7=\alpha_{12}^1-\alpha_{21}^1,$&
$e_8=\alpha_{12}^2-\alpha_{21}^2.$
\end{longtable}

It is easy to see that the multiplication table of $W(2)$ in the basis $e_1,\dots,e_8$ is the one in following figure: 

\begin{center}
\begin{longtable}{c|c|c|c|c|c|c|c|c|}
      & $e_1$ & $e_2$ & $e_3$ & $e_4$ & $e_5$ & $e_6$ & $e_7$ & $e_8$ \\ \hline
$e_1$ & $-e_1$ & $-3e_2$ & $e_3$ & $3e_4$ & $-e_5$ & $e_6$ & $e_7$ & $-e_8$ \\ \hline
$e_2$ & $3e_2$ & $0$ & $2e_1$ & $e_3$ & $0$ & $-e_5$ & $e_8$ & $0$ \\ \hline
$e_3$ & $-2e_3$ & $-e_1$ & $-3e_4$ & $0$ & $e_6$ & $0$ & $0$ & $-e_7$ \\ \hline
$e_4$ & $0$ & $0$ & $0$ & $0$ & $0$ & $0$ & $0$ & $0$ \\ \hline
$e_5$ & $-2e_1$ & $-3e_2$ & $-e_3$ & $0$ & $-2e_5$ & $-e_6$ & $-e_7$ & $-2e_8$ \\ \hline
$e_6$ & $2e_3$ & $e_1$ & $3e_4$ & $0$ & $-e_6$ & $0$ & $0$ & $e_7$ \\ \hline
$e_7$ & $2e_3$ & $e_1$ & $3e_4$ & $0$ & $-e_6$ & $0$ & $0$ & $e_7$ \\ \hline
$e_8$ & $0$ & $e_2$ & $-e_3$ & $-2e_4$ & $0$ & $-e_6$ & $-e_7$ & $0$ \\ \hline
\end{longtable}
\end{center}
\medskip
The subalgebra spanned by the elements $e_1, \ldots, e_6$ is the conservative (and, moreover,  terminal) algebra $W_2$ of commutative 2-dimensional algebras. 
The subalgebra spanned by the elements $e_1, \ldots, e_4$ is the conservative (and, moreover,  terminal) algebra $S_2$ of all commutative 2-dimensional algebras with trace zero multiplication \cite{kaylopo}.

We now investigate the structure of $W(2)$ over fields of arbitrary characteristic. Regardless of $\hbox{char}(\F)$, the diagram of $W(2)$ in the basis of the $e_i$'s is:

\[
\xygraph{ !{<0cm,0cm>;<1.5cm,0cm>:<0cm,1.2cm>::} 
!{(0,.7) }*+{e_1}="a"
!{(0,-.7) }*+{e_6}="b"
!{(.7,0) }*+{e_7}="c" 
!{(-.7,0) }*+{e_5}="d" 
!{(1.4,0.5) }*+{e_3}="e"
!{(1.4,-0.5) }*+{e_4}="f"
!{(-1.4,0.5) }*+{e_8}="g"
!{(-1.4,-0.5) }*+{e_2}="h"
"a":@/_/"d" "d":@/_/"b" "b":@/_/"c" "c":@/_/"a"
"c":@/^/"e" "e":@/^/"f" "f":@/^/"c"
"d":@/_/"g" "g":@/_/"h" "h":@/_/"d"
} 
\]
which is transitive (take into account that $-e_3^2+((e_3e_2)e_8)e_4=e_4$)
Now we claim:

\begin{Th}\label{norbacne}     
For $\hbox{char}(\F)\ne 2,3$ we have $\M(W(2))={\rm End}_\F(W(2)),$ 
hence $W(2)$ is simple.
 In the characteristic $2$ case, $W(2)$ has a two dimensional ideal 
 $I=\F(e_5+e_8)\oplus\F(e_6+e_7)$ and $W(2)/I \cong W_2.$
 Moreover $ \M(W(2)) \cong M_6(\F)\oplus M_2(\F)$.
 {  In the characteristic $3$ case  $\M=\M(W(2))$ has a $12$-dimensional radical $\M^\bot$ of square zero and $\M^\bot W(2)=\F(e_1+e_5)\oplus\F(e_3+e_6)$ is an ideal of $W(2)$}.
\end{Th}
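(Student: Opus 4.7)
The plan is to handle the three characteristic cases separately, reusing the strategies developed earlier for $\Sa_2$ and Theorem \ref{siri}. For $\ch(\F)\ne 2,3$ I would invoke Proposition \ref{atunrojo}: the graph of $W(2)$ in the basis $(e_i)_{i=1}^8$ displayed just above the theorem is strongly connected, so by Remark \ref{locomia} it suffices to produce, for each index $i$, a single operator $T_i\in\M:=\M(W(2))$ and a $j_i$ with $T_i(e_k)=\delta_{ik}e_{j_i}$. As in Theorem \ref{siri}, one composes short products of $L_{e_k}$ and $R_{e_l}$ (iterates of $L_{e_2}$ and $L_{e_3}$, which have many zero entries, are particularly efficient) to isolate each basis vector one at a time. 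This yields $\M=\Endo_\F(W(2))$ and hence simplicity of $W(2)$.

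For $\ch(\F)=2$, a direct inspection of the multiplication table (using $2=0$) shows that $I=\F(e_5+e_8)\oplus\F(e_6+e_7)$ is a two-sided ideal and, moreover, that $I^2=0$. In the quotient $W(2)/I$ one has $\bar e_7=\bar e_6$ and $\bar e_8=\bar e_5$, and the restriction of the multiplication to $\bar e_1,\ldots,\bar e_6$ coincides with the table of $W_2$, so $W(2)/I\cong W_2$. For the structure of $\M$, consider the natural ring homomorphism $\rho\colon\M(W(2))\to\M(W(2)/I)$: it is surjective, since the generators of the target lift via $L_{e_i},R_{e_i}$ with $i\le 6$, and by Theorem \ref{siri} its target is $\Endo_\F(W_2)\cong M_6(\F)$. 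The kernel $\ker\rho$ consists of operators sending $W(2)$ into $I$; using $I^2=0$ together with the fact that $L_x,R_x\in\ker\rho$ whenever $x\in I$, one shows that $\ker\rho$ annihilates $I$ and hence embeds into $\hom_\F(W(2)/I,I)$, and that a handful of explicit products of generators exhaust the full $\Endo_\F(I)\cong M_2(\F)$ on $I$. A central idempotent $p\in\M$ splitting $\rho$ is then exhibited, giving $\M(W(2))\cong M_6(\F)\oplus M_2(\F)$.

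For $\ch(\F)=3$, replicating the strategy of Theorem \ref{siri}, I would first compute the matrices of all $L_{e_i}$ and $R_{e_i}$ in the basis $(e_i)_{i=1}^8$ and extract an explicit basis of $\M$. Its associative trace form $\esc{f,g}:=\tr(fg)$ has a radical $\M^\bot$ which is automatically an ideal, and a linear-algebra computation produces an explicit $12$-dimensional subspace. One then checks $(\M^\bot)^2=0$ directly on this basis, and evaluates $T(e_k)$ for $T$ ranging over it to read off $\M^\bot\cdot W(2)=\F(e_1+e_5)\oplus\F(e_3+e_6)$; a final quick check that this subspace is preserved by left and right multiplication by every $e_i$ confirms it is an ideal of $W(2)$. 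The main obstacles are the bookkeeping volume in characteristic $3$ (computing $\M$, the radical of its trace form, and its image on $W(2)$), and, in characteristic $2$, producing the explicit splitting idempotent of $\M$ that upgrades the embedding $\M\hookrightarrow M_6(\F)\oplus M_2(\F)$ into an isomorphism.
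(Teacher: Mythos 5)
Your treatment of characteristics $\ne 2,3$ and $3$ coincides, in strategy and essentially in detail, with the paper's proof: in the first case the paper verifies condition (2) of Proposition \ref{atunrojo} by exhibiting short products of multiplication operators ($L_{e_8}L_{e_7}L_{e_8}=6E_{34}$, $R_{e_7}^2=-E_{27}$, etc.) and then invokes Remark \ref{locomia}; in characteristic $3$ it lists all $L_{e_i},R_{e_i}$, extracts a basis of $\M$, computes the trace-form radical $\M^\bot$, checks $(\M^\bot)^2=0$ and evaluates $\M^\bot\cdot W(2)$, exactly as you propose. (In characteristic $2$ you locate the ideal $I$ by direct inspection where the paper obtains it as $\M^\bot W(2)$; that difference is harmless.)

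The genuine gap is in the rest of your characteristic $2$ argument. Write $f_1=e_5+e_8$, $f_2=e_6+e_7$, so $I=\F f_1\oplus\F f_2$, and note $I\cdot W(2)=0$ while $W(2)\cdot I\subset I$. Your claim that $\ker\rho$ annihilates $I$ is false. The restriction homomorphism $\M\to\Endo_\F(I)$ is surjective: $L_{e_1}|_I=\mathrm{id}$, $L_{e_2}|_I$ sends $f_2\mapsto f_1$ and kills $f_1$, $L_{e_3}|_I$ sends $f_1\mapsto f_2$ and kills $f_2$, and products of these give all four matrix units of $\Endo_\F(I)\cong M_2(\F)$. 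If $\ker\rho$ killed $I$, this surjection would factor through $\M/\ker\rho\cong\Endo_\F(W_2)\cong M_6(\F)$, producing a surjective algebra homomorphism $M_6(\F)\to M_2(\F)$, which is impossible since $M_6(\F)$ is simple of dimension $36>4$. In fact your two steps contradict each other: operators mapping $W(2)$ into $I$ and killing $I$ compose to zero, so a kernel of the kind you describe would satisfy $(\ker\rho)^2=0$, whereas the kernel of a split surjection with $\M\cong M_6(\F)\oplus M_2(\F)$ would be a copy of $M_2(\F)$, which is not square-zero.

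More seriously, the last step cannot be repaired, because $\M$ is not semisimple in characteristic $2$ and hence admits no such splitting idempotent. The operator $R_{f_1}=R_{e_5}+R_{e_8}$ is a nonzero element of $\M$ (it sends $e_1$ to $f_1$), it maps $W(2)$ into $I$, and it kills $I$ (since $I\cdot W(2)=0$); therefore $K:=\{T\in\M\colon T(W(2))\subset I,\ T(I)=0\}$ is a nonzero ideal of $\M$ with $K^2=0$. What your argument, once corrected, does prove is $\M/\rad(\M)\cong M_6(\F)\oplus M_2(\F)$: the joint map $T\mapsto(\bar T,T|_I)$ is onto $M_6(\F)\times M_2(\F)$ because its image is a subdirect subalgebra of a product of two non-isomorphic simple algebras, and its kernel is exactly $K$, which turns out to be the $12$-dimensional radical (so $\dim\M=52$). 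You could not have known this, but the paper's own proof shares the defect: it computes the $12$-dimensional square-zero ideal $R=\M^\bot$ and then asserts that $T\mapsto(\bar T,T|_{RW(2)})$ is an isomorphism onto $\Endo_\F(W(2)/RW(2))\times\Endo_\F(RW(2))$, even though every listed basis element $r_i$ of $R$ lies in the kernel of that very map; the printed claim $\M(W(2))\cong M_6(\F)\oplus M_2(\F)$ should accordingly be read as a statement about $\M/\rad(\M)$.
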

\begin{proof}
\begin{enumerate}
    \item[(A)] Assume first that $\hbox{char}(\F)\ne 2,3$. Then we have:
\begin{longtable}{rclll}
$L_{e_8}L_{e_7}L_{e_8}$& $=$ & $6 E_{34}$ & \hbox{ implying } & $E_{3k}\in\M \hbox{ for any } k.$\\ 
$R_{e_7}^2$ &$ =$&$  -E_{27}$ & \hbox{ implying } & $E_{2k}\in\M \hbox{ for any } k.$\\ 
$L_{e_7}^2$ &$ =$&  $6E_{14}+2E_{23}$ & \hbox{ implying } & $E_{1k}\in\M \hbox{ for any } k.$\\
$L_{e_2}^2$ & $=$ & $6E_{32}+2E_{41}$ & \hbox{ implying } & $E_{4k}\in\M
\hbox{ for any } k.$\\
$L_{e_2}R_{e_4}$ & $=$ & $3E_{13}+2E_{21}-2E_{83}$ & \hbox{ implying } & $E_{8k}\in\M \hbox{ for any } k.$\\
$L_{e_2}R_{e_6}$ &$ =$ & $-E_{15}+E_{55}+E_{85}$ & \hbox{ implying } & $E_{5k}\in\M \hbox{ for any } k.$\\
$L_{e_2}$ & $=$ & $3E_{12}+2E_{31}+E_{43}-E_{65}+E_{78}$ & \hbox{ hence } & $-E_{65}+E_{78}\in\M.$
\end{longtable}
Thus $E_{7k}=(-E_{65}+E_{78})E_{8k}\in\M$ for any $k$. Therefore $E_{65}\in\M$ and so every $E_{6k}\in\M$. Thus we conclude 
$\M(W(2))=\Endo_\F(W(2))$ in the case of characteristic $\ne 2,3$.

  \item[(B)] Assume now $\hbox{char}(\F)=2$. 
A simple but tedious computation reveals that the radical $R=\rad(\esc{\cdot,\cdot})$ of the trace form $\esc{\cdot,\cdot}\colon W(2)\times W(2)\to \F$ given as before by $\esc{x,y}:=\text{trace}(xy)$ has
a basis given by 
\begin{longtable}{lll}
$\begin{array}{lll}
r_1&=&  E_{15}+E_{18}, \\
r_2&=&  E_{16}+E_{17},\\ 
r_3&=&  E_{25}+E_{28},\\
r_4&=&  E_{26}+E_{27},
\end{array}$ &
$\begin{array}{lll}
r_5&=&  E_{35}+E_{38}, \\ 
r_6&=&  E_{36}+E_{37}, \\
r_7&=&  E_{45}+E_{48},\\
r_8&=&  E_{46}+E_{47},
\end{array}$ & 
$\begin{array}{lll}
r_9&=&     E_{55}+E_{58}+E_{85}+E_{88}, \\
r_{10}&=&  E_{56}+E_{57}+E_{86}+E_{87}, \\
r_{11}&=&  E_{65}+E_{68}+E_{75}+E_{78}, \\
r_{12}&=&  E_{66}+E_{67}+E_{76}+E_{77}.
\end{array}$
\end{longtable}

It is also straightforward that $R^2=0$. The natural action
$\M\times W(2)\to W(2)$ provides the two-dimensional ideal $R\cdot W(2)\triangleleft W(2)$ which is $R\cdot W(2)=\F(e_5+e_8)\oplus\F(e_6+e_7)$. Furthermore the quotient algebra $W(2)/R W(2)$ is isomorphic to the six-dimensional algebra $B$ of section \ref{zoro}. By Theorem \ref{siri}, 
$W(2)/R W(2)$ is simple. One can easily check that $RW(2) \subset \text{Lann}(W(2))$ but $RW(2)\not\subset\text{Rann}(W(2))$. On the other hand, the two-sided annihilator of the ideal $RW(2)$, that is, the vector space of elements $x\in W(2)$ such that $x (RW(2))=0=(RW(2))x$ is generated by $e_3+e_7$, $e_4$, $e_5+e_8$ and $e_6+e_7$.
This implies that there is no ideal $I$ complementing $RW(2)$ (because if $I$ existed it would have dimension $6$ and  it would be contained in the linear span
 of $\{e_3+e_7, e_4, e_5+e_8, e_6+e_7\}$ which is impossible).
 Now the natural representation $\M\to\Endo(W(2))$ induces the isomorphism map $$\M\to\Endo[W(2)/RW(2)]\times\Endo(RW(2))$$
 $$T\mapsto (\bar T, T\vert_{RW(2)})$$
 where $\bar T$ is the map induced in the quotient $W(2)/RW(2)$
 by the fact that $RW(2)$ is $\M$-invariant. 
Hence, $ \M(W(2)) \cong M_6(\F)\oplus M_2(\F)$.


{  \item[(C)]
In case $\text{char}(\F)=3$ we have 

\begin{longtable}{ll}
$L_{e_1}= -E_{11}+E_{33}-E_{55}+E_{66}+E_{77}-E_{88}$, & 
$R_{e_1}= -E_{11}+E_{33}+E_{51}-E_{63}-E_{73}$,\\
$L_{e_2}= -E_{31}+E_{43}-E_{65}+E_{78}$, & 
$R_{e_2}=-E_{31}+E_{61}+E_{71}+E_{82},$\\
$L_{e_3}= E_{13}-E_{21}+E_{56}-E_{87}$, & 
$R_{e_3}= E_{13}-E_{21}-E_{53}-E_{83}$,\\
$L_{e_4}= 0$, & 
$R_{e_4}= E_{23}+E_{84}$,\\
$L_{e_5}= E_{11}-E_{33}+E_{55}-E_{66}-E_{77}+E_{88}$, & 
$R_{e_5}= -E_{15}+E_{36}+E_{55}-E_{66}-E_{76}$,\\
$L_{e_6}= -E_{13}+E_{21}-E_{56}+E_{87}$, & 
$R_{e_6}= E_{16}-E_{25}-E_{56}-E_{86},$\\

$L_{e_7}= L_{e_6}$, & 
$R_{e_7}= E_{17}+E_{28}-E_{57}-E_{87}$,\\
$L_{e_8}= E_{22}-E_{33}+E_{44}-E_{66}-E_{77}$, & 
$R_{e_8}= -E_{18}-E_{37}+E_{58}+E_{67}+E_{77},$\\

\end{longtable}
A basis for $\M$ is given by the set of matrices:
\begin{longtable}{ll}
$E_{ij}$\ for  $i=2,4,7,8$ and any $j$,\\
$E_{ai}+E_{bj}$\ for $(a,b)=(1,5),(3,6),(5,5)$ or $(6,6)$ and $(i,j)=(1,5)$ or $(3,6)$,\\
$E_{ai}-E_{bi}$\ for $(a,b)=(1,5)$ or $(3,6)$ and $i=2,4,5,6,7,8$. 
\end{longtable}
We have computed again the radical $\M^\bot$ of its trace form 
$\esc{f,g}:=\tr(fg)$ and it is $12$-dimensional. A basis for $\M^\bot$ is
\begin{longtable}{lll}
$E_{2,1}+E_{2,5},$&$ E_{2,3}+E_{2,6},$& $E_{1,1}+E_{1,5}-E_{5,1}+2 E_{5,5},$\\
$ E_{4,1}+E_{4,5},$&$ E_{4,3}+E_{4,6},$ &$ E_{1,3}+E_{1,6}-E_{5,3}+2 E_{5,6},$\\

$E_{7,1}+E_{7,5},$&$E_{7,3}+E_{7,6},$& $
E_{3,1}+E_{3,5}-E_{6,1}+2 E_{6,5},$\\

$E_{8,1}+E_{8,5},$&$E_{8,3}+E_{8,6},$ & $E_{3,3}+E_{3,6}-E_{6,3}+2 E_{6,6}.$

\end{longtable}
One can easily check that $(\M^\bot)^2=0$. Furthermore $\M^\bot\cdot W(2)=\F(e_1+e_5)\oplus\F(e_3+e_6)$ is a (two-sided) ideal of $W(2)$.}

\end{enumerate}
\end{proof}

\subsubsection{Automorphisms of $W(2)$.}
In this section we work again over a commutative ring $R$ and denote $W(2)_R$ the $R$-algebra $\oplus_{i=1}^8 R e_i$ where the multiplication table of the $e_i$'s is that of the multiplication table of $W(2).$ 
If we take a generic element $w=\sum_{i=1}^8\l_i e_i\in W(2)_R$ and compute the matrix of $L_w$
relative to the basis of the $e_i$'s we obtain:
$$ 
\left(\tiny
\begin{array}{cccccccc}
 -\lambda _1-2 \lambda _5 & 3 \lambda _2 & 2 \xi_1 & 0 & 0 & 0 & 0 &
   0 \\
 \xi_1 & -3 \lambda _1-3 \lambda _5+\lambda _8 & 0 & 0 & 0 & 0 & 0 & 0
   \\
 2 \lambda _2 & 0 & \xi_2 & 3\xi_1  & 0 &
   0 & 0 & 0 \\
 0 & 0 & \lambda _2 & 3 \lambda _1-2 \lambda _8 & 0 & 0 & 0 & 0 \\
 0 & 0 & 0 & 0 & -\lambda _1-2 \lambda _5 & -\xi_1 & 0 & 0 \\
 0 & 0 & 0 & 0 & -\lambda _2 & \xi_2 & 0 & 0 \\
 0 & 0 & 0 & 0 & 0 & 0 & \xi_2 & \lambda _2 \\
 0 & 0 & 0 & 0 & 0 & 0 & \xi_1 & -\lambda _1-2 \lambda _5 \\
\end{array}
\right),$$
where 
$\xi_1=-\lambda _3+\lambda _6+\lambda _7$ and 
$\xi_2=\lambda _1-\lambda _5-\lambda _8,$
whose characteristic polynomial is invariant under automorphism so that $L_w$ and $L_{\theta(w)}$ have the same characteristic polynomial for any automorphism 
$\theta$ of $W(2)_R$. We list some of the coefficients
of that characteristic polynomial:
{\tiny 
\begin{longtable}{lcl}
$\Lambda_1(w)$& $:=$ & $4 \left(3 \lambda _5+\lambda _8\right)$,\\ 
$\Lambda_2(w)$& $:=$ & $-4 \left(3 \lambda _1^2+3 \lambda _5 \lambda _1-3 \lambda _8 \lambda _1-15 \lambda _5^2-\lambda _8^2-3 \lambda _2 \lambda _3+3 \lambda _2 \lambda _6+3 \lambda _2 \lambda _7-12 \lambda _5 \lambda _8\right),$\\
$\Lambda_3(w)$ &$:=$ & $-2 \left(3 \lambda _5+\lambda _8\right) \left(18 \lambda _1^2+18 \lambda _5 \lambda _1-18 \lambda _8 \lambda _1-27 \lambda _5^2+\lambda _8^2-18 \lambda _2 \lambda _3+18 \lambda _2 \lambda _6+18 \lambda _2 \lambda _7-30 \lambda _5 \lambda _8\right).$
\end{longtable}}
The left annihilator of $W(2)_R$ is 
\begin{center}$Re_4\oplus R(2 e_1-e_5+3 e_8)\oplus R(e_3+e_6)\oplus R(e_3+e_7).$
\end{center}

\begin{Lem}\label{monkey} Assume $W\ne 0$ to be a free $R$-module $W=\oplus_{i=1}^n Re_i$ and $M$ a submodule
with a basis $\{u_1,\ldots,u_k\}$ which is a subset of another
 basis $\{u_1,\ldots,u_n\}$ of $W$. Then if $M\subset \oplus_{i=1}^n\mi_i e_i$
for some maximal ideals $\mi_i\triangleleft R$
we have $M=0$.
\end{Lem}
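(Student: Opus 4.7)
The approach is to work with the change-of-basis matrix $P=(p_{ij})\in\mathrm{GL}_n(R)$ determined by $u_j=\sum_i p_{ij}e_i$; since $\{e_i\}$ and $\{u_j\}$ are both $R$-bases of the free module $W$, the matrix $P$ is invertible, i.e.\ $\det(P)\in R^\times$. The hypothesis that $u_1,\ldots,u_k\in M\subset\oplus_i\mi_i e_i$ translates exactly into the column condition $p_{ij}\in\mi_i$ for every $i\in\{1,\ldots,n\}$ and every $j\in\{1,\ldots,k\}$. The plan is to assume for contradiction that $k\ge 1$ and to use this column condition against the invertibility of $P$.

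The first step is to invoke the Leibniz expansion
$\det(P)=\sum_{\sigma\in S_n}\mathrm{sgn}(\sigma)\prod_i p_{i,\sigma(i)}$: for each $\sigma$, at least one index $i$ must satisfy $\sigma(i)\le k$, producing a factor $p_{i,\sigma(i)}\in\mi_i$ in that summand. When all $\mi_i$ coincide with one maximal $\mi$, every summand lies in $\mi$ and hence $\det(P)\in\mi$, contradicting $\det(P)\in R^\times$ and forcing $k=0$. For the general case in which the $\mi_i$ are possibly distinct, the plan is to reduce $P$ modulo each of the (finitely many) distinct maximal ideals $\mi$ appearing among $\mi_1,\ldots,\mi_n$ and to track, column by column, which rows of the first $k$ columns of $\bar{P}\in M_n(R/\mi)$ are forced to vanish — namely exactly the rows $\ell$ with $\mi_\ell=\mi$.

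The main obstacle, and the step requiring the most care, is packaging these row-vanishings across the various residue fields into a single contradiction, since within any one quotient $R/\mi$ only a proper subset of rows of the first $k$ columns is killed, and $\det(\bar P)\in(R/\mi)^\times$ individually need not fail. The plan for this step is a pigeonhole argument that produces a single $\mi$ in whose residue field the first $k$ columns of $\bar P$ have sufficiently many vanishing entries to force $\det(\bar P)=0$; this is to be combined with the fact that $M$ is a direct summand of $W$ (since it is spanned by a subset of a basis), so that $M$ is free of rank exactly $k$ and the first $k$ columns of $P$ cannot be collapsed by unimodular column operations to a family with fewer nonzero rows. Once some local $\det(\bar P)$ is shown to vanish, $\det(P)$ is no longer a unit, which contradicts the invertibility of $P$, so $k=0$ and $M=0$.
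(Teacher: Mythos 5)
Your determinant argument settles the case in which all the $\mi_i$ coincide with a single maximal ideal $\mi$, and it does so correctly and by a genuinely different (more elementary) route than the paper: with $u_j=\sum_i p_{ij}e_i$ and $p_{ij}\in\mi$ for $j\le k$, every term $\mathrm{sgn}(\sigma)\prod_i p_{i,\sigma(i)}$ of the Leibniz expansion contains a factor with column index $\sigma(i)\le k$ (take $i=\sigma^{-1}(1)$), so $\det(P)\in\mi$, contradicting $\det(P)\in R^\times$ once $k\ge 1$. The paper instead base-changes to $S:=\bigotimes_{i=1}^n R/\mi_i$ and plays the freeness of $M\otimes_R S$ (of rank $k$, since $M$ is spanned by part of a basis) against the vanishing of every $z\otimes 1_S$ forced by $m_i\in\mi_i$.

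The genuine gap is your ``general case'' for distinct maximal ideals: it is only a plan, and the promised pigeonhole step cannot be executed, because the statement is false in that generality. Take $R=\Z$, $n=2$, $k=1$, $u_1=2e_1+3e_2$, $u_2=e_1+2e_2$ (so $\det(P)=2\cdot 2-3\cdot 1=1$ and $\{u_1,u_2\}$ is a basis of $W=\Z^2$), with $\mi_1=2\Z$ and $\mi_2=3\Z$. Then $M=\Z u_1\subset \mi_1 e_1\oplus\mi_2 e_2$ and $M\ne 0$; moreover $\det(\bar P)$ is a unit both mod $2$ and mod $3$, so no residue field can witness the vanishing your pigeonhole step is supposed to produce. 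The lemma actually holds exactly when $\mi_1+\cdots+\mi_n\ne R$ (for maximal ideals, when all the $\mi_i$ are equal), which is precisely what your first step proves. You are in good company here: the paper's own proof silently carries the same restriction, since $S\cong R/(\mi_1+\cdots+\mi_n)$, so when two of the $\mi_i$ differ one gets $S=0$ and ``$M\otimes_R S$ is free of rank $k$'' yields no contradiction (over the zero ring every module is free of every rank). So in actual strength your proposal matches the paper; to make it a complete proof you should either add the hypothesis that the $\mi_i$ lie in a common proper ideal (reducing at once to your single-$\mi$ argument) or delete the unexecutable general-case paragraph, rather than leave the claim at its stated, false, level of generality.
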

\begin{proof} For $i=1,\ldots,n$ define the $R$-algebras $K_i:=R/\mi_i$ (which are
fields) and  $S=\otimes_{i=1}^n K_i$. If $M\ne 0$ the $S$-module $M\otimes_R S$ is free
with a basis of cardinal $k$ but for any $z\in M$ we have $z=\sum_{i=1}^n m_i e_i$ with
$m_i\in\m_i$ and any element $z\otimes 1_S\in M\otimes_R S$ satisfies
\begin{center}
$z\otimes 1_S=\sum_{i=1}^n m_i e_i\otimes 1_S=\sum_{i=1}^n e_i\otimes m_i 1_S$\end{center} but
$m_i 1_S=m_i (1_1\otimes\cdots\otimes 1_n)=(1_1\otimes\cdots\otimes m_i1_i\otimes\cdots \otimes 1_n)=(1_1\otimes\cdots\otimes 0\otimes\cdots \otimes 1_n)=0$.\end{proof}
The fact the $\mi_i$ is maximal in Lemma \ref{monkey} is not important. What it is essential is that it is proper (as any maximal ideal is). So we could replace the maximality hypothesis in the Lemma with $\mi_i\ne R$. 

It is also easily seen that if $L$ is a free $R$-module with a (finite) basis $\{l_1,\ldots,l_n\}$ then it may not have a system of generators of cardinal $<n$. This allows to extend
the previous Lemma in the following sense:
\begin{Lem}
Assume $W\ne 0$ to be a free $R$-module $W=\oplus_{i=1}^n Re_i$ and $M$ a submodule
with a basis $\{u_1,\ldots,u_k\}$ which is a subset of another
 basis $\{u_1,\ldots,u_n\}$ of $W$. Denote by
 $p_i\colon W\to R$ the $i$-th coordinate projection relative to the basis $\{e_i\}_{i=1}^n$.
Then for each $i=1,\ldots,n$ if $p_i(R)\ne 0$ we have $p_i(W)=R$.
\end{Lem}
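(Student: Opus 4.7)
The plan is to read the hypothesis/conclusion as saying that for each fixed $i$, the ideal $p_i(M)\subset R$ is either $0$ or the whole of $R$ (which is the only way the statement reasonably extends the preceding Lemma), and to prove it by contrapositive, mirroring the tensor-with-residue-fields technique used in the previous Lemma, but reinforced by the remark that a free $R$-module of rank $n$ admits no system of generators of cardinality less than $n$.

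First, I would suppose for contradiction that $p_i(M)$ is a proper ideal and pick a maximal ideal $\mi\triangleleft R$ with $p_i(M)\subset\mi$. Setting $K:=R/\mi$ and reducing everything mod $\mi$, the basis $\{u_1,\ldots,u_n\}$ of $W$ descends to a $K$-basis $\{\bar u_1,\ldots,\bar u_n\}$ of $\bar W:=W\otimes_R K\cong K^n$, and $\{\bar u_1,\ldots,\bar u_k\}$ is a $K$-basis of $\bar M:=M\otimes_R K$. Writing $u_j=\sum_l a_{lj}e_l$, the condition $p_i(M)\subset\mi$ forces $\bar p_i(\bar u_j)=\bar a_{ij}=0$ for every $j\le k$, so $\bar u_1,\ldots,\bar u_k$ all lie in the hyperplane $H:=\ker\bar p_i\subset\bar W$.

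The second ingredient is that the transition matrix $A=(a_{lj})$ is invertible, so cofactor expansion along row $i$ yields $c_1,\ldots,c_n\in R$ with $\sum_j a_{ij}c_j=1$. Reducing mod $\mi$ and using $\bar a_{ij}=0$ for $j\le k$ gives $\sum_{j>k}\bar a_{ij}\bar c_j=1$ in $K$, meaning the complementary vectors $\bar u_{k+1},\ldots,\bar u_n$ alone account for all of $\bar p_i$. Combined with the previous step, I would then repeat the residue-field construction at every coordinate $l$ (as in the proof of the previous Lemma, forming $S=K_1\otimes\cdots\otimes K_n$), simultaneously killing every $p_l(M)$ that is a proper ideal while keeping $\bar M$ of rank $k$ by freeness, producing the contradiction and forcing $p_i(M)=0$.

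The main obstacle will be the central step: knowing only that $\bar u_1,\ldots,\bar u_k$ lie in a single hyperplane of $K^n$ is by itself not a contradiction when $k\le n-1$, so the argument really has to use the full multi-factor tensor construction of the previous Lemma together with the no-fewer-generators remark to prevent a free rank-$k$ module from fitting into a structurally smaller target. Executing this coordination across all coordinates simultaneously, while keeping the proper-ideal hypothesis intact at each step, is the delicate point that I expect will consume most of the proof.
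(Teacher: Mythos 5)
Your instinct about where the difficulty lies is exactly right, and unfortunately that difficulty is fatal: the step you defer (``repeat the residue-field construction at every coordinate\dots producing the contradiction'') is not a finishing touch but the entire proof, and it cannot be carried out. Everything you establish before it is correct but contradiction-free: that $\bar u_1,\dots,\bar u_k$ lie in the hyperplane $\ker\bar p_i\subset K^n$ while $\sum_{j>k}\bar a_{ij}\bar c_j=1$ is perfectly consistent whenever $k\le n-1$, as you yourself observe. The multi-factor tensor $S=K_1\otimes_R\cdots\otimes_R K_n$ cannot rescue the argument, because residue fields at \emph{distinct} maximal ideals are comaximal and their tensor product over $R$ can be the zero ring (e.g. $\mathbb{F}_2\otimes_{\mathbb{Z}}\mathbb{F}_3=0$); over the zero ring the assertion ``$M\otimes S$ is free of rank $k$'' is vacuous and yields nothing. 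In fact no completion of your plan exists, because the statement under your (natural, and surely intended) reading --- $p_i(M)\neq 0$ implies $p_i(M)=R$ --- is false for non-local $R$: take $R=\mathbb{Z}$, $W=\mathbb{Z}e_1\oplus\mathbb{Z}e_2$, $u_1=2e_1+e_2$, $u_2=e_1+e_2$ (transition matrix of determinant $1$, so $\{u_1,u_2\}$ is a basis of $W$) and $M=\mathbb{Z}u_1$; then $p_1(M)=2\mathbb{Z}$ is nonzero and proper. Your cofactor identity holds in this example with $\bar a_{12}\bar c_2=1$, confirming that the setup places no obstruction.

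For comparison, the paper's own proof takes a more direct route than yours: it tensors once with the single quotient $S=R/p_i(M)$ (no maximal ideals, no transition-matrix cofactors), notes that every $z\otimes 1_S$ with $z\in M$ lies in the span of $\{e_l\otimes 1_S\}_{l\neq i}$, and appeals to the preceding remark that a free module with a basis of $n$ elements admits no generating family of cardinality $<n$. But a family of $n-1$ generators conflicts with freeness of $M\otimes S$ only when the rank of $M\otimes S$ exceeds $n-1$, i.e.\ only when $k=n$ (in which case $M=W$ and the conclusion is trivial) --- which is precisely the weakness you flagged as ``the central step'': containment in a hyperplane contradicts nothing for $k\le n-1$. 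So your diagnosis of the obstacle is accurate, your proposed mechanism for overcoming it fails at the zero-ring point above, and the dichotomy itself should be doubted rather than pursued; the safe statements in this vicinity require either $k=n$ or a local base ring, where ``all $p_i(M)$ proper'' does force $M\subset\mathfrak{m}W$ and hence $M=0$ by reducing the split inclusion $M\subset W$ modulo $\mathfrak{m}$.
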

\begin{proof}
Assume without loss of generality that the ideal $p_1(W)$ is proper and nontrivial. We can define the ring
$$S=R/p_1(W)\otimes\overbrace{R\otimes\cdots\otimes R}^{n-1}$$
and $M_S:=M\otimes S$ is a free $S$-module of dimension $K$ but for any element $z\in M$ given by $z=\sum_{i=1}^n r_ie_i$ we have 
$z\otimes 1_S=r_1e_1\otimes 1_S+\sum_{i>1}r_ie_i\otimes 1_S=\sum_{i>1}e_i\otimes r_i$ so that $\{e_i\otimes 1_S\}_{i>1}$ is a system of generators of $M_S$ of cardinal $<n$.
\end{proof}
\remove{\begin{corollary}
So if  $W$ is a free $R$-module $W=\oplus_1^n Re_i$ and $M$ a nonzero submodule
with a basis $\{u_1,\ldots,u_k\}$ which is a subset of another
 basis $\{u_1,\ldots,u_n\}$ of $W$, there is some $z\in M$ such that some
coordinate of $z$ relative to $\{e_i\}_1^n$ is an element of $R^\times$.
\end{corollary}
\begin{proof} Consider the projection homomorphisms $p_i\colon W\to R$ where
$z=\sum_{i=1}^n p_i(z)e_i$ for each $z\in W$.
Assume that $p_i(W)$ is a proper ideal of $R$ for any $i=1,\ldots,n$. Then each $p_i(W)$ is
contained in a maximal ideal and the Lemma can be applied to get the contradiction $M=0$.
So there must be some $i\in\{1,\ldots,n\}$ such that $p_i(W)=R$.\end{proof}}

 It is known that any commutative ring $R$
 satisfies the strong rank condition \cite[(1.38) Corollary, p. 15]{lam}, equivalently, for any 
 monomorphism $R^m\to R^n$ we have $m \le n$. In particular consider the free $R$-module $R^n$ with
 canonical basis $\{e_i\}_{i=1}^n$. If a free $R$-submodule $M$ of $R^n$ has $\dim_R(M)=k$ then $k<n$. Moreover if
  $\{e_1,\ldots,e_k\}\subset M$ then we want also to prove that \begin{equation}\label{equino}M=\oplus_{i=1}^k R e_i.\end{equation}
 Indeed, take a basis $\{u_i\}_{i=1}^k$ of $M$. Then for $1\le i\le k$ we have $e_i=\sum_{q=1}^k a_i^q u_q$ and for any $q$ we also have $u_q=\sum_{j=1}^n b_q^j e_j$ (where $a_i^q,b_q^j\in R$). Thus
 $1=\sum_{q=1}^ka_i^qb_q^j=\delta_i^j$ (Kronecker's delta) or equivalently $AB=1_k$ (identity matrix $k\times k$ in $M_n(R)$) where 
 $A=(a_i^j)_{i,j=1}^k$ and $B=(b_i^j)_{i,j=1}^k$. 
 But since $R$ is a commutative ring, it is  stably finite (see \cite[(1.12) Proposition]{lam} and definition \cite[\S 1B, p.5]{lam}).
 So $BA=1_K$ also. Now denoting $\text{\bf u}:=(u_1,\ldots,u_k)$ and $\text{\bf e}:=(e_1,\ldots,e_k)$ we can write $A\text{\bf u}^t=\text{\bf e}^t$ hence 
 $\text{\bf u}^t=B\text{\bf e}^t$ proving formula \eqref{equino}.
 \begin{Lem}\label{lema30}
 Assume $\frac{1}{2},\frac{1}{3}\in R$ and that $I$ is left ideal of $W(2)_R$ which is a free $R$-submodule and $\dim_R(I)=4$.
 Denote by $p_i\colon W(2)_R\to R$ the $i$th coordinate function relative to the basis
 $\{e_i\}$. If $p_i(x)\in R^\times$ for some $x\in I$ and $i\in\{1,2,3,4\}$ then 
 $I=\oplus_{j=1}^4 R e_j$.
 \end{Lem}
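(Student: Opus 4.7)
The plan is to prove $I = V_1$, where I abbreviate $V_1 := \oplus_{j=1}^{4} R e_j$, using the $R$-module decomposition $W(2)_R = V_1 \oplus V_2 \oplus V_3$ with $V_2 := Re_5 \oplus Re_6$ and $V_3 := Re_7 \oplus Re_8$. The displayed matrix of $L_w$ earlier in this subsection is block-diagonal with respect to this decomposition, so each $V_j$ is itself a left ideal of $W(2)_R$. My strategy has two stages: first show $V_1 \subset I$, then deduce $I \subset V_1$ via a direct-summand plus rank argument.

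For the first inclusion I exploit the spectral theory of $L_{e_1}$. Because $\tfrac{1}{2},\tfrac{1}{3} \in R$, one reads off the multiplication table of $W(2)$ that $L_{e_1}$ acts on the canonical basis with four pairwise-distinct eigenvalues $-3,-1,1,3$ whose differences are all units in $R$; the Lagrange projectors $\pi_{\lambda}\in R[L_{e_1}]$ therefore exist and, being polynomials in $L_{e_1}$ with $R$-coefficients, preserve $I$. Their eigenspaces are $Re_2$, $Re_1\oplus Re_5\oplus Re_8$, $Re_3\oplus Re_6\oplus Re_7$, $Re_4$. Applying the projector corresponding to the index $i$ at which $p_i(x)\in R^{\times}$ produces an element of $I$ of one of these shapes, with invertible coefficient at $e_i$. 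If $i\in\{2,4\}$ this already yields $e_i \in I$. If $i=1$, then one application of $L_{e_2}$ to $\pi_{-1}(x)=x_1e_1+x_5e_5+x_8e_8$ annihilates the $V_2$ and $V_3$ parts (since $L_{e_2}(e_5)=L_{e_2}(e_8)=0$) and sends $e_1$ to $3e_2$, producing $3x_1 e_2 \in I$. Symmetrically, if $i=3$, then $L_{e_3}$ applied to $\pi_1(x)=x_3e_3+x_6e_6+x_7e_7$ kills the $V_2,V_3$ parts and sends $e_3$ to $-3e_4$, giving $-3x_3 e_4\in I$. In every case at least one of $e_2$ or $e_4$ lies in $I$; then the strongly connected graph on $\{e_1,\dots,e_4\}$ (realized by $L_{e_3}(e_2)=-e_1$, $L_{e_3}(e_1)=-2e_3$, $L_{e_3}(e_3)=-3e_4$ and $L_{e_2}(e_4)=e_3$, $L_{e_2}(e_3)=2e_1$, $L_{e_2}(e_1)=3e_2$) propagates it to all four basis vectors, so $V_1 \subset I$.

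For the reverse inclusion I argue as follows. Since $V_1$ is a direct summand of $W(2)_R$ with complement $V_2\oplus V_3$, the inclusion $V_1\subset I$ gives the internal decomposition $I = V_1 \oplus \bigl(I\cap(V_2\oplus V_3)\bigr)$. The projection $\pi\colon I \twoheadrightarrow V_1$ is then a surjection between free $R$-modules of rank $4$; composing with any $R$-isomorphism $R^{4}\xrightarrow{\sim} I$ produces a surjective endomorphism of $R^{4}$, which by the standard Vasconcelos-type theorem (every surjective endomorphism of a finitely generated module over a commutative ring is automatically injective) is an isomorphism. Therefore $\ker\pi = I\cap(V_2\oplus V_3)=0$, and $I = V_1$.

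The main obstacle is precisely this final step: over an arbitrary commutative ring, a containment of two free modules of the same finite rank need not be an equality (e.g.\ $2\Z\subset\Z$), so one cannot simply invoke ``equal ranks forces equality''. The crux is to first prove that $V_1$ sits inside $I$ \emph{as a direct summand}, and only then apply Vasconcelos; the rest of the argument is routine multiplication-table bookkeeping in which the hypotheses $\tfrac{1}{2},\tfrac{1}{3}\in R$ are used exactly to invert the scalars $2$, $3$, $-3$, $3x_1$, $3x_3$, and the Vandermonde-type denominators of the Lagrange projectors of $L_{e_1}$.
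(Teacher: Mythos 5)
Your proof is correct. The two potentially delicate points both check out: the Lagrange projectors $\pi_\lambda\in R[L_{e_1}]$ exist because the eigenvalue differences of $L_{e_1}$ are $\pm 2,\pm 4,\pm 6$, which are units when $\frac{1}{2},\frac{1}{3}\in R$, and they preserve $I$ because a left ideal is stable under $\mathrm{Id}$ and all powers of $L_{e_1}$; and the modular-law decomposition $I=V_1\oplus\bigl(I\cap(V_2\oplus V_3)\bigr)$ together with Vasconcelos' theorem legitimately forces $I\cap(V_2\oplus V_3)=0$. Your multiplication-table computations ($e_2e_1=3e_2$, $e_2e_5=e_2e_8=0$, $e_3e_3=-3e_4$, $e_3e_6=e_3e_7=0$, and the propagation cycle among $e_1,\dots,e_4$) are all accurate.

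The paper follows the same skeleton --- put one of $e_2,e_4$ into $I$, propagate through the strongly connected graph on $\{e_1,e_2,e_3,e_4\}$, then upgrade the containment $\oplus_{j=1}^4 Re_j\subset I$ to an equality --- but implements the two key steps differently. For the extraction step, instead of spectral projectors the paper exhibits explicit words in left multiplications, verified by brute computation: $e_8(e_2(e_5(e_1x)))=6e_2$ when $p_1(x)=1$, $e_3(e_3(e_3(e_1x)))=18e_4$ when $p_2(x)=1$, $e_8(e_2(e_2x))=6e_2$ when $p_3(x)=1$, and $e_2(e_2(e_2x))=6e_2$ when $p_4(x)=1$; the hypotheses on $R$ enter only to invert $6$ and $18$. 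Your projector argument is more systematic: it explains where such operators come from ($L_{e_1}$ is split semisimple over $R$ with eigenspaces $Re_2$, $Re_1\oplus Re_5\oplus Re_8$, $Re_3\oplus Re_6\oplus Re_7$, $Re_4$), so the required operators do not have to be found by hand, at the cost of invoking slightly more machinery. For the concluding step, the paper cites its formula \eqref{equino}, established just before the lemma via the strong rank condition and stable finiteness of commutative rings ($AB=1_k$ implies $BA=1_k$); your direct-summand-plus-Vasconcelos argument is an equivalent route resting on the same commutative-algebra phenomenon, with the minor advantage of being self-contained inside the proof. In substance the two proofs share the same ideas; yours packages the computational core more conceptually and makes it transparent that $\frac{1}{2},\frac{1}{3}\in R$ is exactly what is needed.
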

 \begin{proof}
 First we prove that if some $e_i\in I$ (with $i\in\{1,2,3,4\}$) then $I=\oplus_{j=1}^4 R_j$. Assume first that $e_1\in I$, then in the first column of
 the table of multiplication of $W(2)$  we can see that $e_2,e_3\in I$ and since 
 $e_4$ appears in third column we conclude $e_4\in I$. So $\oplus_{i=1}^4 Re_i\subset I$ and $\dim_R(I)=4$ implies by formula \eqref{equino} that $I=\oplus_{i=1}^4 Re_i$.
 In case $e_2\in I$ we can see that $e_1\in I$ (second column of the table of multiplication of $W(2)$). The same applies if $e_3\in I$. Finally if $e_4\in I$ then $e_3\in I$ for a similar reason. Now assume that some $x\in I$ has $p_1(x)\in R^\times$.
 We can assume without loss of generality that $p_1(x)=1$. Since we have 
 $e_8 \left(e_2 \left(e_5 \left(e_1
 x\right)\right)\right)=6e_2$ then $e_2\in I$ and we apply the proved part of 
 the Lemma. If some $x\in I$ has $p_2(x)\in R^\times$ again we can assume
 $p_2(x)=1$ and then since we have $e_3 \left(e_3 \left(e_3
 \left(e_1 x\right)\right)\right)=18e_4$ we conclude $e_4\in I$ and can
 apply again the proved part of the Lemma. In case $p_3(x)\in R^\times$ for some
 $x\in I$ we take into account that $e_8 \left(e_2 \left(e_2
 x\right)\right)=6e_2$ implying $e_2\in I$ (as before assuming $p_3(x)=1$). Finally if $p_4(x)\in R^\times$ for some $x\in I$ we use 
 $e_2(e_2(e_2 x))=6e_2$.
 \end{proof}
 
 \begin{corollary}
 Assume $\frac{1}{2},\frac{1}{3}\in R$ and that $I$ is left ideal of $W(2)_R$ which is a free $R$-submodule and $\dim_R(I)=4$.
 Denote as before by $p_i\colon W(2)_R\to R$ the $i$th coordinate function relative to the basis $\{e_i\}_{i=1}^8$. The either $I=\oplus_{j=1}^4 R e_j$ or $I=\oplus_{j=5}^8 R e_j$.
 \end{corollary}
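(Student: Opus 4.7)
The plan is to derive the corollary from Lemma~\ref{lema30} together with its symmetric counterpart obtained by swapping the roles of the first and second halves of the basis, and to use the Lemma immediately preceding Lemma~\ref{lema30} to guarantee that at least one coordinate of some element of $I$ is a unit.

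First I would check from the multiplication table of $W(2)$ that $\oplus_{j=5}^{8}R e_j$ is a left ideal of $W(2)_R$; columns $5$--$8$ of the table stay inside this span. Together with the fact that $\oplus_{j=1}^{4}R e_j$ is already a left ideal, this shows that the multiplication algebra $\M(W(2)_R)$ preserves the direct-sum decomposition $W(2)_R=(\oplus_{j=1}^{4}R e_j)\oplus(\oplus_{j=5}^{8}R e_j)$. I would then establish the exact analogue of Lemma~\ref{lema30} with $\{5,6,7,8\}$ replacing $\{1,2,3,4\}$: if some $x\in I$ has $p_i(x)\in R^{\times}$ for $i\in\{5,6,7,8\}$, then $I=\oplus_{j=5}^{8}R e_j$. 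The proof transports verbatim, now using the relations $e_3 e_5=e_6$, $e_2 e_6=-e_5$, $e_2 e_7=e_8$, $e_3 e_8=-e_7$ together with $e_1 e_j=\pm e_j$ ($j=5,\dots,8$) to produce, for each $i\in\{5,\dots,8\}$ separately, a composition of left multiplications that sends $x$ to a unit multiple of a single basis vector $e_j$ with $j\in\{5,\dots,8\}$; once one such $e_j$ lies in $I$, all four do, and the stably finite argument from formula~\eqref{equino} forces equality.

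To close the argument, I would invoke the Lemma preceding Lemma~\ref{lema30}: since $I$ is a rank-$4$ free $R$-submodule of $W(2)_R$ whose basis extends to a basis of $W(2)_R$, each coordinate projection $p_i(I)$ is either $0$ or the whole of $R$. Because $I\neq 0$, some $p_i(I)=R$, so there is $x\in I$ with $p_i(x)\in R^{\times}$ for some $i$. If that $i$ lies in $\{1,2,3,4\}$, Lemma~\ref{lema30} gives $I=\oplus_{j=1}^{4}Re_j$; otherwise the counterpart just established gives $I=\oplus_{j=5}^{8}Re_j$.

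The main obstacle is the case analysis inside the counterpart of Lemma~\ref{lema30}: one has to verify for each $i\in\{5,6,7,8\}$ separately that suitable compositions of left multiplications isolate a basis vector cleanly. This bookkeeping is significantly simplified by the stability of $\oplus_{j=5}^{8}Re_j$ under left multiplication by $W(2)_R$, so the first-half coordinates of $x$ propagate independently and can be ignored during the extraction; the four required computations then amount to a direct rerun of those in Lemma~\ref{lema30}, with $e_5,e_6,e_7,e_8$ playing the roles of $e_1,e_2,e_3,e_4$.
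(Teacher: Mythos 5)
Your reduction skeleton (find a unit coordinate, then apply Lemma~\ref{lema30} or a mirrored version of it) contains a step that is not merely unjustified but false: there is no ``symmetric counterpart'' of Lemma~\ref{lema30} for the indices $\{5,6,7,8\}$, and its proof cannot ``transport verbatim''. The obstruction is structural. A left ideal is stable only under left multiplications, and no composition of left multiplications connects the pair $\{e_5,e_6\}$ with the pair $\{e_7,e_8\}$: the relations you list ($e_3e_5=e_6$, $e_2e_6=-e_5$, $e_2e_7=e_8$, $e_3e_8=-e_7$, $e_1e_j=\pm e_j$) all stay inside one pair, and in fact $Re_5\oplus Re_6$ and $Re_7\oplus Re_8$ are each left ideals of $W(2)_R$ (the paper uses exactly this just after the corollary). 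So your claim ``once one such $e_j$ lies in $I$, all four do'' is wrong: $e_5\in I$ only yields $Re_5\oplus Re_6\subset I$. The extraction step fails as well: restricted to $\oplus_{j=5}^{8}Re_j$, the left multiplications span the algebra with basis $E_{55}+E_{88}$, $E_{66}+E_{77}$, $-E_{65}+E_{78}$, $E_{56}-E_{87}$, which is isomorphic to $M_2(R)$ and acts by the same matrix on the two copies $Re_5\oplus Re_6$ and $Re_8\oplus Re_7$ (bases $(e_5,e_6)$ and $(e_8,-e_7)$); hence the left ideal generated by $x=e_5+e_8$, an element whose fifth coordinate is a unit, is $R(e_5+e_8)\oplus R(e_6-e_7)$, which contains no unit multiple of any $e_j$. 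The mirrored lemma is false outright: for $R=\Z[\frac{1}{6}]$ the submodule $Re_5\oplus Re_6\oplus 7Re_7\oplus 7Re_8$ is a free rank-$4$ left ideal containing $e_5$, yet it is not $\oplus_{j=5}^{8}Re_j$.

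This is precisely where the paper's proof takes a different (and necessary) route: it performs no multiplication computations in positions $5,\dots,8$ at all. From Lemma~\ref{lema30} it extracts the dichotomy ``$I=\oplus_{j=1}^{4}Re_j$ or $I\subset\oplus_{j=5}^{8}Re_j$'', and in the second case it concludes equality purely module-theoretically, from $\dim_R(I)=4$ together with the stable-finiteness/rank argument behind formula~\eqref{equino}. Your proof should close its second case the same way; alternatively, once one knows $e_5,e_6\in I$, one can take $y\in I$ with unit seventh or eighth coordinate, subtract its $e_5,e_6$-components, and apply $-E_{65}+E_{78}$ and $E_{56}-E_{87}$ to land $e_7,e_8\in I$ --- but that is a genuinely different computation, not a rerun of Lemma~\ref{lema30}. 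A secondary point: your appeal to the projection lemma silently adds the hypothesis that a basis of $I$ extends to a basis of $W(2)_R$, which is not in the statement of the corollary; some such hypothesis is genuinely needed (over $R=\Z[\frac{1}{6}]$ the left ideal $5\bigl(\oplus_{j=1}^{4}Re_j\bigr)$ is free of rank $4$ and is neither alternative), and it does hold for the ideals $\theta(\oplus_{j=1}^{4}Re_j)$, $\theta\in\aut(W(2)_R)$, to which the corollary is actually applied.
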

 \begin{proof}
 By Lemma \ref{lema30} either $I=\oplus_{i=1}^4 Re_i$ or $I\subset\oplus_{i=5}^8 Re_i$. But $\dim_R(I)=4$ so
 \eqref{equino} gives the equality $I=\oplus_{i=5}^8 Re_i$. 
 \end{proof}
 Next we keep on assuming $\frac{1}{2},\frac{1}{3}\in R$. We want to investigate the case that $\theta\colon W(2)_R\to W(2)_R$ be an
 automorphism such that $\theta(\oplus_{i=1}^4 Re_i)=\oplus_{i=5}^8 Re_i$. Denote $\theta(e_1)=\sum_{i=5}^8\l_i e_i$. Since $\Lambda_1(e_1)=0$ we have $\Lambda_1(\theta(e_1))=4 \left(3 \lambda _5+\lambda _8\right)=0$ so $\l_8=-3\l_5$. Also $\Lambda_2(e_1)=3$ and $\Lambda_2(\theta(e_1))=12\l_5^2$ which
 implies $\l_5^2=1/4$ and in particular $\l_5\in R^\times$. Furthermore $e_1^2+e_1=0$ hence 
 $\theta(e_1)^2+\theta(e_1)=0$ which (after the corresponding computation) 
 gives $\l_5=\frac{1}{2}$ and $\l_7=3\l_6$. Thus we have
 \begin{equation}\label{prosf}
 \theta(e_1)=\frac{1}{2}e_5+\l_6 e_6+3\l_6e_7-\frac{3}{2}e_8.   
 \end{equation}
 Next we study $\theta(e_2)=\sum_{i=5}^8\m_i e_i$. Again $\Lambda_1(e_2)=0=\Lambda_1(\theta(e_2))=4 \left(3 \mu _5+\mu _8\right)$ hence $\m_8=-3\m_5$. Moreover $\Lambda_2(e_2)=0=\Lambda_2(\theta(e_2))=12\m_5^2$ hence $\m_5^2=\m_8^2=\m_5\m_8=0$. Since 
 $e_2^2=0$ we have $0=\theta(e_2)^2=\left(\mu _6-\mu _7\right) \mu _5 e_6-\left(3 \mu _6+\mu _7\right) \mu _5 e_7$. Thus $\m_6\m_5=\m_7\m_5$ and $3\m_6\m_5=\m_7\m_5$ whence $\m_6\m_5=0=\m_7\m_5$. But then $\m_5\theta(e_2)=0$, that is, $\theta(\m_5 e_2)=0$ 
 which gives $\m_5=0$. We get 
 \begin{equation}\label{fsorp}
\theta(e_2)=\m_6 e_6+\m_7 e_7.     
 \end{equation}
 But then, since $e_1e_2+3e_2=0$, applying $\theta$ and taking into account \eqref{prosf} and \eqref{fsorp}, we find $0=\theta(e_1)\theta(e_2)+3\theta(e_2)=4\mu _6 e_6 + 4\mu _7   e_7$ so that $\m_6=\m_7=0$ which is a contradiction. So far we have proved that
 no automorphism of $W(2)_R$ maps $\oplus_{i=1}^4 Re_i$ to $\oplus_{i=5}^8 Re_i$. As a consequence no automorphism of $W(2)_R$ maps $\oplus_{i=5}^8 Re_i$ to $\oplus_{i=1}^4 Re_i$.
 \begin{corollary}
 If $\frac{1}{2},\frac{1}{3}\in R$ any automorphism of $W(2)_R$ maps $\oplus_{i=1}^4 Re_i$ to itself and the same holds for $\oplus_{i=5}^8 Re_i$.
 \end{corollary}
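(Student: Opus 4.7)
The plan is to combine two facts already established in the preceding discussion: the classification of $4$-dimensional free left ideals of $W(2)_R$ given by the previous corollary, together with the just-proved nonexistence of an automorphism sending $\oplus_{i=1}^4 Re_i$ to $\oplus_{i=5}^8 Re_i$.

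First I would verify by direct inspection of the multiplication table of $W(2)$ that $\oplus_{i=1}^4 Re_i$ is a left ideal of $W(2)_R$: for every $i\in\{1,\ldots,8\}$ and every $j\in\{1,2,3,4\}$, the product $e_i\cdot e_j$ lies in $\oplus_{k=1}^4 Re_k$, a routine read-off of the relevant columns. The analogous check (the columns indexed by $e_5,\ldots,e_8$) shows that $\oplus_{i=5}^8 Re_i$ is likewise a left ideal.

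Next, given $\theta\in\aut_R(W(2)_R)$, the image $\theta\bigl(\oplus_{i=1}^4 Re_i\bigr)$ is again a left ideal of $W(2)_R$, and since $\theta$ is in particular an $R$-module automorphism, it is a free $R$-submodule of rank $4$ with basis $\theta(e_1),\ldots,\theta(e_4)$. The preceding corollary then leaves only two possibilities: either $\theta\bigl(\oplus_{i=1}^4 Re_i\bigr)=\oplus_{j=1}^4 Re_j$ or $\theta\bigl(\oplus_{i=1}^4 Re_i\bigr)=\oplus_{j=5}^8 Re_j$. The second option is precisely what the arguments leading to equations \eqref{prosf} and \eqref{fsorp} excluded, via the invariants $\Lambda_1,\Lambda_2$ applied to $\theta(e_1)$ and $\theta(e_2)$ together with the relations $e_1^2+e_1=0$, $e_2^2=0$ and $e_1e_2+3e_2=0$. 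Hence only the first possibility remains, giving the first assertion. The second assertion follows at once by applying the same reasoning to $\theta^{-1}$ (which is also an $R$-algebra automorphism), or equivalently by observing that $\theta$ permutes the two-element set of $4$-dimensional free left ideals of $W(2)_R$ and, since each one is fixed, must fix both.

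I do not anticipate any serious obstacle; the statement is essentially a bookkeeping consequence of the two previous results. The only subtlety worth noting is the implicit claim that an $R$-algebra automorphism sends a free $R$-submodule of rank $n$ to a free $R$-submodule of rank $n$, but this is immediate from the fact that $\theta$ is an $R$-linear bijection.
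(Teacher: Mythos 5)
Your proof is correct and is essentially the argument the paper intends: the corollary is stated without a separate proof precisely because it follows, as you reconstruct, from combining the dichotomy for free rank-$4$ left ideals with the just-established impossibility (via $\Lambda_1,\Lambda_2$ and the relations for $e_1,e_2$) of an automorphism carrying $\oplus_{i=1}^4 Re_i$ onto $\oplus_{i=5}^8 Re_i$, together with the observation that both submodules are left ideals and that applying the argument to $\theta^{-1}$ handles the second ideal. No gaps; your explicit table check of the left-ideal property and the remark about images of free submodules under $R$-linear bijections are exactly the implicit steps the paper relies on.
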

 \begin{Lem}\label{lema30p}
 Assume $\frac{1}{2},\frac{1}{3}\in R$ and that $I$ is left ideal of $W(2)_R$ which is a free $R$-submodule of dimension $2$. 
 Denote by $p_i\colon W(2)_R\to R$ the $i$th coordinate function relative to the basis
 $\{e_i\}$. Then $p_i(x)=0$ for any $x\in I$ and $i\in\{1,2,3,4\}$.
 \end{Lem}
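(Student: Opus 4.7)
The plan is to prove the equivalent statement $\pi_+(I)=0$, where $\pi_+\colon W(2)_R\to A_+:=\oplus_{i=1}^4 Re_i$ is the projection along $A_-:=\oplus_{i=5}^8 Re_i$. First I will check, directly from the multiplication table of $W(2)$, that every left multiplication $L_{e_i}$ preserves both $A_+$ and $A_-$; hence $L_a$ preserves the decomposition $W(2)_R=A_+\oplus A_-$ for every $a\in W(2)_R$, so $\pi_+(ax)=a\pi_+(x)$. In particular $\pi_+(I)$ is a left ideal of $W(2)_R$ contained in $A_+$.

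Next, $L_{e_5}$ acts diagonally on $A_+$ with four distinct eigenvalues $-2,-3,-1,0$ on $e_1,e_2,e_3,e_4$, and the associated Vandermonde determinant equals $\pm 12$, which is a unit in $R$ by the hypothesis $\tfrac{1}{2},\tfrac{1}{3}\in R$. Applied to any $y=\sum_{i=1}^4\mu_i e_i\in\pi_+(I)$, together with $L_{e_5}y,L_{e_5}^2 y,L_{e_5}^3 y\in\pi_+(I)$, Vandermonde inversion shows that each component $\mu_i e_i$ lies in $\pi_+(I)$. Setting $S_i:=\{s\in R:se_i\in\pi_+(I)\}$, a short chain of left multiplications using $e_2e_4=e_3$, $e_2e_3=2e_1$, $e_2e_1=3e_2$ and $e_3^2=-3e_4$ (and $\tfrac{1}{2},\tfrac{1}{3}\in R$) gives $S_1=S_2=S_3=S_4=:S$, so $\pi_+(I)=\oplus_{i=1}^4 Se_i\cong S^{\oplus 4}$ as $R$-modules.

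Since $\pi_+\colon I\to\pi_+(I)$ is surjective and $I$ is free of rank $2$ with basis $\{x_1,x_2\}$, $\pi_+(I)\cong S^{\oplus 4}$ is $2$-generated over $R$; projecting onto the first coordinate gives $S=Rp_1(x_1)+Rp_1(x_2)$, so $S$ itself is finitely generated. Localizing at any maximal ideal $\mi\triangleleft R$, the $R_\mi$-module $S_\mi^{\oplus 4}$ is still $2$-generated, hence $4\dim_{k(\mi)}(S_\mi/\mi S_\mi)=\dim_{k(\mi)}(S_\mi^{\oplus 4}/\mi S_\mi^{\oplus 4})\le 2$, forcing $S_\mi=\mi S_\mi$ and thus $S_\mi=0$ by Nakayama's lemma (applicable since $S_\mi$ is finitely generated). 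As this holds at every maximal ideal, $S=0$ and $\pi_+(I)=0$. The main subtlety, compared with the field case, is ruling out that $S$ could a priori consist of nonzero nilpotent elements; the structural identification $\pi_+(I)\cong S^{\oplus 4}$ combined with local Nakayama is precisely what achieves this.
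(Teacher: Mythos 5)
Your strategy is sound and genuinely different from the paper's, but as written it has one concrete gap, in the step where you identify the ideals $S_i:=\{s\in R: se_i\in\pi_+(I)\}$. The four identities you cite, $e_2e_4=e_3$, $e_2e_3=2e_1$, $e_2e_1=3e_2$ and $e_3^2=-3e_4$, translate (via left multiplication by $e_2$, $e_2$, $e_2$, $e_3$ respectively) into the inclusions $S_4\subset S_3$, $S_3\subset S_1$, $S_1\subset S_2$ and $S_3\subset S_4$; these only yield $S_3=S_4\subset S_1\subset S_2$, not $S_1=S_2=S_3=S_4$. The missing equalities are not cosmetic: your final counting argument genuinely needs them. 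Indeed, the configuration $S_1=S_2=R$, $S_3=S_4=0$ is compatible with all the inclusions your list produces, and then $\pi_+(I)=Re_1\oplus Re_2$ is perfectly $2$-generated, so the localization-plus-Nakayama count gives $2\le 2$ and detects no contradiction. The repair is immediate from the same multiplication table: $e_3e_2=-e_1$ gives $S_2\subset S_1$, and $e_3e_1=-2e_3$ gives $S_1\subset S_3$ (using $\frac{1}{2}\in R$), which closes the cycle and forces $S_1=S_2=S_3=S_4=:S$. With that fix every remaining step checks out: each $L_{e_i}$ does preserve both $A_+=\oplus_{i=1}^4Re_i$ and $A_-=\oplus_{i=5}^8Re_i$, so $\pi_+$ intertwines left multiplications and $\pi_+(I)$ is a left ideal; $L_{e_5}$ acts on $A_+$ as $\mathrm{diag}(-2,-3,-1,0)$, whose Vandermonde determinant $\pm 12$ lies in $R^\times$, so each component $\mu_ie_i$ of an element of $\pi_+(I)$ again lies in $\pi_+(I)$; and the identification $\pi_+(I)\cong S^{\oplus 4}$ with $S$ finitely generated, followed by Nakayama at every maximal ideal $\mathfrak{m}$, correctly gives $S=0$.

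For comparison, the paper argues quite differently: it first shows that no nonzero multiple $re_i$ with $i\in\{1,2,3,4\}$ can lie in $I$, by localizing $R$ at the powers of $r$ and comparing ranks (the inclusion $\oplus_{i=1}^4R_re_i\subset I_r$ would force $4\le 2$), and then it kills each coordinate by evaluating explicit operators, e.g. $e_8(e_2(e_5(e_1x)))=6p_1(x)e_2\in I$, hence $p_1(x)=0$, and similarly for $p_2,p_3,p_4$. Your route replaces these explicit operator identities by a structural argument (splitting into eigencomponents plus commutative algebra). One thing your version buys, once repaired, is exactly the subtlety you flagged at the end: Nakayama makes no nondegeneracy assumption on the coefficients, so nilpotent entries cause no trouble, whereas the paper's localization step tacitly requires the coefficient $r$ to be non-nilpotent (otherwise $R_r=0$ and the rank comparison says nothing). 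The price is that your argument needs the full cycle of inclusions among the $S_i$, which is precisely the point that must be patched.
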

 \begin{proof}
First we assume that some $re_i\in I$ with $r\ne 0$ and $i\in\{1,2,3,4\}$. This will take us to a contradiction. Indeed, under that assumption we have
$I_r\supset R_r\frac{e_i}{1}$ where we denote by $R_r$ the localization $RS^{-1}$ being $S=\{1,r,r^2,\ldots\}$. Let $W:=W(2)_R$ and consider the localization $W_r:=W\otimes_R R_r$ then (since $R_r$ is a flat $R$-algebra) $I_r:=I\otimes_R R_r$ is an ideal of $W_r$
which a free $R_r$-module and $\dim_{R_r}(I_r)=2$. We will identity $W_r$ with the algebra of
fractions $\frac{x}{r^n}$ ($x\in W, n\ge 0$) where $\frac{x}{r^n}=\frac{x'}{r^m}$ if and only if 
$r^k(r^m x-r^n x')=0$ for some $k$.
Now if $r e_i\in I$ ($i=1,2,3,4$) then $\frac{r e_1}{1}\in I_r$ so that $\frac{e_1}{1}\in I_r$. 
Consequently $R_r e_1\subset I_r$ and the multiplication table of $W$ gives 
$\oplus_{i=1}^4 R_r e_i\subset I_r$ (we have identified $\frac{e_i}{1}$ with $e_i$).
But then $4\le 2$ taking dimensions. We conclude that if $r e_i\in I$ with $i\in\{1,2,3,4\}$ then $r=0$. Now consider $x\in I$ with $p_i(x)\ne 0$ and $i\in\{1,2,3,4\}$. We have $I\ni e_8 \left(e_2 \left(e_5 \left(e_1
 x\right)\right)\right)=6p_1(x)e_2$ whence $p_1(x)=0$. Next we have $I\ni e_3 \left(e_3 \left(e_3
 \left(e_1 x\right)\right)\right)=18p_2(x)e_4$ hence $p_2(x)=0$. Then  $I\ni e_8 \left(e_2 \left(e_2 x\right)\right)=6p_3(x)e_2$ whence $p_3(x)=0$ and finally the equality 
 $I\ni e_2(e_2(e_2 x))=6p_4(x)e_2$ to deduce that $p_4(x)=0$.
 \end{proof}
 
 Consider now an automorphism $\theta$ of $W(2)_R$ (again $\frac{1}{2},\frac{1}{3}\in R$) and let us study the image $\theta(e_6)$. Since $Re_5\oplus Re_6$ is a left ideal of $W(2)_R$ and it is under the hypothesis of Lemma \ref{lema30p}, we have 
 $\theta(e_5),\theta(e_6)\in \oplus_{i=5}^8 Re_i$. So for instance 
 $\theta(e_5)=\sum_{i=5}^8\m_i e_i$ and
 $\theta(e_6)=\sum_{i=5}^8\l_i e_i$ and we can use again the invariants $\Lambda_1$ and
$\Lambda_2$. We have 
\begin{center}$12=\Lambda_1(e_5)=\Lambda_1(\theta(e_5))=4\left(3 \mu _5+\mu _8\right)$
\end{center} whence $\m_8=3-3\m_5$. Also $-15=\Lambda_2(e_5)=\Lambda_2(\theta(e_5))=
12 \mu _5^2-18 \mu _5-9$. So we deduce that 
$2 \mu _5^2-3 \mu _5+1=0$ implying that $\m_5$ is invertible. Furthermore, $\theta(e_5)^2+2\theta(e_5)=0$ hence the following elements of $R$ are zero:
\begin{center}
$2 \left(\mu _5-1\right) \mu _5,$ \ 
$\mu _5 \mu _6-\mu _6-\mu _5 \mu _7,$ \
$-3 \mu _5 \mu _6+3 \mu _6-\mu _5 \mu _7+2 \mu _7,$  \ 
$6 \left(\mu _5-1\right)^2.$
\end{center}
This implies that $\m_5=1$, $\m_7=0$ and $\m_8=0$. So $\theta(e_5)=e_5+\m_6 e_6$.
Now using again the invariants $\Lambda_1(e_6)=0$ and $\Lambda_2(e_6)=0$. We have $\Lambda_1(\theta(e_6))=4(3\l_5+\l_8)=0$ whence
 $\l_8=-3\l_5$. Also $\Lambda_2(\theta(e_6))=0$ from which we derive $\l_5^2=0$ and
 consequently $\l_8^2=0=\l_8\l_5$. We also have $\theta(e_6)^2=0$ which gives 
 $\l_5 \l_6-\l_5 \l_7=0$, $-3 \l_5 \l_6-\l_5 \l_7=0$ and so $\l_5\l_6=\l_5\l_7=0$.
 As a consequence $\l_5\theta(e_6)=0$ which gives $\l_5=0$. So $\theta(e_6)=\l_6 e_6$
 and in summary we have 
 \begin{equation}
 \begin{cases}\theta(e_5)=e_5+\m_6 e_6\cr \theta(e_6)=\l_6 e_6.\end{cases}
 \end{equation}
Now, under the same assumtions $\frac{1}{2},\frac{1}{3}\in R$ let us investigate
$\theta(e_7), \theta(e_8)$ for $\theta\in\aut(W(2)_R)$. As in the previous case
we have $\theta(e_7),\theta(e_8)\in\oplus_{i=5}^8 Re_i$. 
Write $\theta(e_7)=\sum_{i=5}^8\g_i e_i$, since $\theta(e_6)\theta(e_7)=0$ we get 
$\g_5=\g_8=0$ so that $\theta(e_7)=\g_6 e_6+\g_7 e_7$. Finally write $\theta(e_8)=\sum_{i=5}^8\d_i e_i$, from the equality $\Lambda_1(e_8)=\Lambda_1(\theta(e_8))$ we get $\delta _8=1-3 \delta _5$ and from $\Lambda _2\left(e_8\right)=\Lambda _2\left(\theta(e_8)\right)$ we have $\delta _5 \left(2 \delta _5-1\right)=0$.
Now the couple of identities $e_6e_8=e_7$ and $e_8e_6=-e_6$ give the equations
$$-\gamma _6-\delta _5 \lambda _6=0,\ -\gamma _7-3 \delta _5 \lambda _6+\lambda _6=0,\ 
2 \delta _5 \lambda _6=0$$
so that $\g_6=0$, $\g_7=\l_6$ and $\d_5=0$ (because $\l_6=0$). Then 
 $\theta(e_7)=\l_6 e_7$ and $\theta(e_8)=\d_6 e_6+\d_7 e_7+e_8$ but since $\theta(e_8)^2=0$ we get $\d_6=0$ so that
 \begin{equation}\label{cabraloca}
 \begin{cases}\theta(e_7)=\l_6 e_7\cr \theta(e_8)=\d_7 e_7+e_8.\end{cases}
 \end{equation}

Thus we conclude
\begin{prop} If $\frac{1}{2},\frac{1}{3}\in R$ any automorphism $\theta$ of
$W(2)_R$ fixes any of the left ideals $\oplus_{i=1}^4 Re_i$, 
$\oplus_{i=1}^6 Re_i$,
$\oplus_{i=5}^6 Re_i$ and $\oplus_{i=7}^8 Re_i$. Relative to the basis $\{e_i\}_{i=1}^8$ the matrix
of an automorphism is of the form
{\tiny \begin{equation}\label{actula}
\left(
\begin{array}{cccccc|cc}
 1 & 0 & 2 t x & 3 t^2 x^2 &  0 & 0 & 0 & 0\\
 x & \frac{1}{t} & t x^2 & t^2 x^3 & 0 & 0 & 0&0 \\
 0 & 0 & t & 3 t^2 x & 0 & 0 & 0&0\\
 0 & 0 & 0 & t^2 & 0 & 0 & 0&0\\
 0 & 0 & 0 & 0 & 1 & -t x & 0&0\\
 0 & 0 & 0 & 0 & 0 & t &0 &0\\
 \hline
 0 & 0 & 0 & 0 & 0 & 0 &t & 0\\
0 & 0 & 0 & 0  & 0 & 0 & tx & 1\\
\end{array}
\right)
\end{equation}}
\end{prop}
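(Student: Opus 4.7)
The plan is to assemble the statement from the analysis already carried out above. Invariance of the listed left ideals is essentially already in hand. The Corollary preceding this proposition gives $\theta(\oplus_{i=1}^4 Re_i)=\oplus_{i=1}^4 Re_i$, while the calculations immediately before the statement (obtained by applying Lemma \ref{lema30p} to the two-dimensional left ideals $Re_5\oplus Re_6$ and $Re_7\oplus Re_8$, together with the invariants $\Lambda_1,\Lambda_2$ and the products $e_5^2=-2e_5$, $e_6^2=0$, $e_6e_8=e_7$, $e_8e_6=-e_6$, $e_8^2=0$) established
\[
\theta(e_5)=e_5+\mu_6 e_6,\ \theta(e_6)=\lambda_6 e_6,\ \theta(e_7)=\lambda_6 e_7,\ \theta(e_8)=\delta_7 e_7+e_8,
\]
so $\theta$ preserves both $Re_5\oplus Re_6$ and $Re_7\oplus Re_8$; summing gives invariance of $\oplus_{i=1}^6 Re_i$. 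In particular, the matrix of $\theta$ in the basis $\{e_i\}_{i=1}^{8}$ is block-diagonal of type $6+2$.

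For the upper-left $6\times 6$ block, observe that $\oplus_{i=1}^6 Re_i$ is precisely the subalgebra $(W_2)_R$ of $W(2)_R$, so $\theta|_{(W_2)_R}$ lies in $\aut_R(W_2(R))$. Under the hypothesis $\frac{1}{2},\frac{1}{3}\in R$ we have $\tor_3(R)=0$, so the normal form \eqref{auttor3nul} applies: this restriction has matrix $w(x,t)$ for some uniquely determined $x\in R$, $t\in R^\times$. Reading rows $5$ and $6$ of $w(x,t)$ identifies $\mu_6=-tx$ and $\lambda_6=t$, so the $(7,7)$-entry of the matrix of $\theta$ equals $t$.

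It remains to pin down $\delta_7$. Apply $\theta$ to the product relation $e_2 e_7=e_8$ and use the explicit expression $\theta(e_2)=xe_1+t^{-1}e_2+tx^2 e_3+t^2x^3 e_4$ (second row of $w(x,t)$) together with $\theta(e_7)=t e_7$: since $e_1 e_7=e_7$, $e_2 e_7=e_8$, and $e_3 e_7=e_4 e_7=0$, the product collapses to $\theta(e_2)\theta(e_7)=tx\,e_7+e_8$. Matching with $\theta(e_8)=\delta_7 e_7+e_8$ forces $\delta_7=tx$, producing the lower-right block $\begin{pmatrix} t & 0 \\ tx & 1\end{pmatrix}$ and completing \eqref{actula}. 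The main obstacle is only bookkeeping: making sure the redundancies among the many product relations involving $e_7,e_8$ are compatible. A single cross-check, for instance against $e_3 e_8=-e_7$, where $(te_3+3t^2 x e_4)(tx e_7+e_8)=-te_7$ holds automatically, confirms that no further constraint arises.
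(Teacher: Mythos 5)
Your proof is correct and follows essentially the same route as the paper: invoke the previously established invariance results and \eqref{cabraloca}, identify the upper-left $6\times 6$ block with the normal form \eqref{auttor3nul} via the restriction to $(W_2)_R$, and then determine the one remaining unknown $\delta_7$ from a single product relation. The only (cosmetic) difference is that you pin down $\delta_7=tx$ by applying $\theta$ to $e_2e_7=e_8$, whereas the paper applies $\theta$ to $e_8e_3=-e_3$; both are one-line computations of equal strength.
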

\begin{proof}
Since $\theta$ restricts to an automorphism of $W_2(R)=\oplus_{i=1}^6 Re_i$, the $6\times 6$
upper left block in \eqref{actula} is an in \eqref{auttor3nul}. It remains to prove that 
$\theta(e_8)=tx e_7 +e_8$ but we have proved in \eqref{cabraloca} that $\theta(e_8)=\d_7 e_7+e_8$. Since $\theta(e_8)\theta(e_3)+\theta(e_3)=0$ we have 
\begin{center}$0=(\d_7 e_7+e_8)(te_3+3t^2x e_4)+te_3+3t^2x e_4=3\d_7 t e_4-t e_3 -6t^2xe_4+te_3+3t^2x e_4=
3\d_7 t e_4-3t^2x e_4$\end{center} and since $t$ is invertible $\d_7= tx$.
\end{proof}

\subsubsection{The case $2R=0$.}
Note that necessarily $\frac{1}{3}\in R$.
In this case define $f_1:=e_5+e_8$ and $f_2=e_6+e_7$. Then $I:=Rf_1\oplus Rf_2$ is a $2$-dimensional (two-sided) ideal of $W(2)_R$ (see Theorem \ref{norbacne}). It has a basis $\{f_1,f_2\}$ which is a
subbasis of $\{e_1,e_2,e_3,e_4,e_5,e_6,e_6+e_7,e_5+e_8\}$ which can be seen to be a basis
of $W(2)_R$. Indeed the matrix of coordinates of these vectors relative to the basis of the $\{e_i\}$ is $\tiny\begin{pmatrix}I_6 & 0\cr M & I_2\end{pmatrix}$ where $I_6$ and $I_2$ denote the identity matrices of size $6$ and $2$ respectively and $M=\tiny\begin{pmatrix} 0 & 0 & 0 & 0 & 0 & 1\cr 0 & 0 & 0 & 0 & 1 & 0\end{pmatrix}$. It is easy to check that $M$ is invertible and agrees with its own inverse. The ideal $I$ satisfies $IW(2)_R=0$. 
\begin{Lem}\label{invIuno}
Assume that $2R=0$ and $J\triangleleft W(2)_R$ is a $2$-dimensional ideal such that $JW(2)_R=0$ then 
$J\subset I$.
\end{Lem}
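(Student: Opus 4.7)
The plan is to compute, in characteristic $2$, the full left annihilator $L:=\{x\in W(2)_R: xW(2)_R=0\}$ of $W(2)_R$. Since the hypothesis $JW(2)_R=0$ simply says $J\subset L$, once $L$ is pinned down we can identify the room in which $J$ must sit, and then use the left-ideal condition $W(2)_R\cdot J\subset J$ to cut that room down to $I$. Notice that $I\subset L$ is already known (both $f_1$ and $f_2$ lie in $\text{Lann}(W(2)_R)$, as stated just before the statement of the lemma and immediate from the mod-$2$ multiplication table).

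First I would write $x=\sum_{i=1}^{8}x_i e_i$ and impose $xe_j=0$ for each $j$, using the multiplication table of $W(2)$ reduced modulo $2$. The relations $xe_1=0$ and $xe_2=0$ will already produce the independent conditions
\[
x_1=x_2=0,\qquad x_3+x_6+x_7=0,\qquad x_5+x_8=0,
\]
and a quick column-by-column check shows that the remaining $xe_j=0$ ($j=3,\ldots,8$) are automatically satisfied once these hold. Hence $L$ is the free $R$-module of rank $4$ with basis $\{e_4,\; e_3+e_6,\; f_1,\; f_2\}$, so $L=Re_4\oplus R(e_3+e_6)\oplus I$.

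To finish, take an arbitrary $x\in J$ and expand $x=ae_4+b(e_3+e_6)+cf_1+df_2$ with $a,b,c,d\in R$. Since $J$ is a left ideal with $J\subset L$, we have $e_2x\in J\subset L$. A short calculation using $e_2e_4=e_3$, $e_2(e_3+e_6)=e_5$, $e_2f_1=0$ and $e_2f_2=f_1$ yields
\[
e_2 x \;=\; a\,e_3 + (b+d)\,e_5 + d\,e_8.
\]
Imposing the defining equations of $L$ on this element: the relation $x_3+x_6+x_7=0$ forces $a=0$, and the relation $x_5+x_8=0$, using $2R=0$, forces $b=0$. Thus $x=cf_1+df_2\in I$, and $J\subset I$, as required.

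The only real obstacle is the bookkeeping of reducing the multiplication table modulo $2$; the logical step is a single application of $L_{e_2}$ to the generic element of $L$. I note in passing that the argument does not use the rank-$2$ hypothesis on $J$: it shows the stronger statement that \emph{any} (left) ideal $J$ of $W(2)_R$ with $JW(2)_R=0$ is contained in $I$.
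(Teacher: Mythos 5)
Your proof is correct and follows essentially the same route as the paper's: both identify $J$ inside the rank-$4$ left annihilator of $W(2)_R$ (your basis $\{e_4,\,e_3+e_6,\,f_1,\,f_2\}$ spans the same module as the paper's parametrization), and both then left-multiply a generic element of $J$ by $e_2$ and impose the annihilator's linear relations on the result to force the coefficients outside $I$ to vanish. The only cosmetic difference is that you impose both relations of the annihilator in one pass, whereas the paper first derives $p_3(J)=0$ and then reuses it to kill the $e_4$-coefficient; your closing remark that neither the rank-$2$ hypothesis nor two-sidedness of $J$ is needed is also accurate.
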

\begin{proof}
Any $x\in J$ satisfies $xW(2)_R=0$ which implies that the elements of $J$ are of the form 
\begin{center}$g=\lambda _3 e_3 +\lambda _4e_4 +\lambda _5(e_5+e_8)+\lambda _6 e_6+ \left(\lambda _3+\lambda _6\right)e_7$. \end{center}
Note that the $5$th and $8$th coordinates of $g$ (relative to the basis $\{e_i\}$) agree. So since 
$e_2g\in J$ we must have $\l_6=p_5(e_2g)=p_8(e_2g)=\l_3+\l_6$ whence 
$\l_3=0$ and we have proved that the elements of $J$ satisfy $p_3(J)=0$.
So a general element of $J$ is of the form 
$g=\l_4e_4 +\l_5(e_5+e_8)+\l_6 (e_6+e_7)$. But 
$J\ni e_2g=\l_4 e_3 + \l_6e_5 + 
(\l_3 +\l_6)e_8$ which implies $\l_4=0$. Thus $g\in R(e_5+e_8)\oplus R(e_6+e_7)$.

\end{proof}

Under the hypothesis in the title of this subsection, if $\theta\in\aut(W(2)_R)$, Lemma \ref{invIuno} implies $\theta(I)\subset I$ (recall that
$I$ is the ideal $I=Rf_1\oplus Rf_2$ defined above). Consequently $I\subset \theta^{-1}(I)\subset I$ so that $\theta(I)=I$ for any $\theta\in\aut(W(2)_R)$. 
Since $W(2)_R/I\cong W_2$ (see Theorem \ref{norbacne}) any $\theta\in\aut(W(2)_R)$
induces an automorphism $\bar\theta\colon W_2\to W_2$. Then the matrix of $\bar\theta$ relative to the basis $\{\bar e_i\}_{i=1}^6$ (begin $\bar e_i:=e_i+I$) is the one in formula
\eqref{auttor3nul}. So the matrix of $\theta$ relative to the basis $\{e_1,\ldots,e_6,f_1,f_2\}$ of $W(2)_R$ is of the form
{\tiny \begin{equation}\label{auttor3nuldos}
\left(
\begin{array}{cccccc|cc}
 1 & 0 & 0 &  t^2 x^2 & 0 & 0 & a_1 & a_2 \\
 x & \frac{1}{t} & t x^2 & t^2 x^3 & 0 & 0 & a_3 & a_4 \\
 0 & 0 & t &  t^2 x & 0 & 0 & a_5 & a_6\\
 0 & 0 & 0 & t^2 & 0 & 0 & a_7 & a_8\\
 0 & 0 & 0 & 0 & 1 & t x & a_9 & a_{10}\\
 0 & 0 & 0 & 0 & 0 & t & a_{11} & a_{12}\\
 \hline
 0 & 0 & 0 & 0 & 0 & 0 & a_{13} & a_{14}\\
 0 & 0 & 0 & 0 & 0 & 0 & a_{15} & a_{16}\\
\end{array}
\right)
\end{equation}}
where $t\in R^\times$. Furthermore, if we write the matrix of $\theta$ relative to the basis of
the $e_i$'s and impose the conditions for automorphism we find the relations 
\begin{center}
    $a_1=a_2=a_3=a_4=a_5=a_6=a_7=a_8= 0, a_9= a_{10} t x,$\\
$a_{11}= a_{10} t, a_{12}= 0, a_{14}= 0, a_{15}= a_{13} x, a_{16}= \frac{a_{13}}{t}.$
\end{center}
\begin{Lem} In case $2R=0$ the matrix of an automorphism $\theta\in\aut(W(2)_R)$ relative to the basis $\{e_1,\ldots,e_6,f_1,f_2\}$ of $W(2)_R$ is 
{\tiny \begin{equation}\label{nlhba} 
\Omega_{t,x,v,u}=
\left(
\begin{array}{cccccccc}
 1 & 0 & 0 & t^2 x^2 & 0 & 0 & 0 & 0 \\
 x & \frac{1}{t} & t x^2 & t^2 x^3 & 0 & 0 & 0 & 0 \\
 0 & 0 & t & t^2 x & 0 & 0 & 0 & 0 \\
 0 & 0 & 0 & t^2 & 0 & 0 & 0 & 0 \\
 0 & 0 & 0 & 0 & 1 & t x & u t x & u \\
 0 & 0 & 0 & 0 & 0 & t & u t & 0 \\
 0 & 0 & 0 & 0 & 0 & 0 & v & 0 \\
 0 & 0 & 0 & 0 & 0 & 0 & v x & \frac{v}{t} \\
\end{array}
\right)\end{equation}}
where we have replaced $a_{10}$ with $u$ and $a_{13}$ with $v$. Furthermore $t,v\in R^\times$, $x,u\in R$.
\end{Lem}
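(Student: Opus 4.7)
The plan is to pin down the sixteen entries $a_1,\ldots,a_{16}$ of \eqref{auttor3nuldos} by imposing the multiplicativity relations $\theta(e_ie_j)=\theta(e_i)\theta(e_j)$ for an appropriate family of products, and then to verify that $v\in R^\times$. Since \eqref{auttor3nuldos} is written in the mixed basis $\{e_1,\ldots,e_6,f_1,f_2\}$, I would first rewrite $\theta(e_7)=\theta(f_2)-\theta(e_6)$ and $\theta(e_8)=\theta(f_1)-\theta(e_5)$ in the original basis $\{e_i\}_{i=1}^8$; this is routine because $\theta(e_5)=e_5$ and $\theta(e_6)=txe_5+te_6$ are already known, and $f_1=e_5+e_8$, $f_2=e_6+e_7$ are inverted trivially.

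With these expansions in hand, I would process a small family of products chosen so that each $a_k$ is isolated by the chosen product. The identities $\theta(e_1e_7)$, $\theta(e_2e_7)$, $\theta(e_3e_8)$ and $\theta(e_4e_7)$ force $a_1=\cdots=a_8=0$: the right-hand side of each such equation lies in the left ideal generated by $e_7,e_8$, and comparing coefficients of $e_1,e_2,e_3,e_4$ on both sides yields the eight vanishing relations. Next, $\theta(e_5e_7)$, $\theta(e_5e_8)$, $\theta(e_6e_7)$ and $\theta(e_6e_8)$ pin down $a_{12}=a_{14}=0$ together with $a_9=a_{10}tx$ and $a_{11}=a_{10}t$. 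Finally, $\theta(e_7e_1)$ and $\theta(e_7e_3)$ give $a_{15}=a_{13}x$ and $a_{16}=a_{13}/t$. Renaming $u:=a_{10}$ and $v:=a_{13}$ and substituting into \eqref{auttor3nuldos} yields exactly the matrix $\Omega_{t,x,v,u}$ displayed in \eqref{nlhba}.

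Invertibility of $v$ follows from a direct determinant computation. The matrix $\Omega_{t,x,v,u}$ is block diagonal with a $4\times 4$ block in rows and columns $1$--$4$ and another in rows and columns $5$--$8$ (the two off-diagonal $4\times 4$ blocks are identically zero). A cofactor expansion of the upper-left block (whose first column is $(1,x,0,0)^t$ and whose last three columns contribute an upper-triangular minor) gives determinant $t^2$, and a similar expansion of the lower-right block gives determinant $v^2$; hence $\det(\Omega_{t,x,v,u})=t^2v^2$. Since $t\in R^\times$ is inherited from the induced automorphism of $W_2(R)$ via \eqref{auttor3nul}, the requirement $\det(\Omega_{t,x,v,u})\in R^\times$ forces $v\in R^\times$. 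The main obstacle in the whole argument is purely bookkeeping: organising the multiplication checks so that each $a_k$ is hit by a minimal equation. No conceptual difficulty arises, since every constraint is linear once the chosen product is expanded using the multiplication table of $W(2)$.
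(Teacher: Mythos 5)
Your plan --- determine the $a_k$ in \eqref{auttor3nuldos} by imposing $\theta(e_ie_j)=\theta(e_i)\theta(e_j)$ on selected products, then extract $v\in R^\times$ at the end --- is the same strategy as the paper's proof, and your determinant step ($\det\Omega_{t,x,v,u}=t^2v^2$, so $v^2$ and hence $v$ is a unit) is sound. The core of your execution, however, would fail, and for a structural reason rather than a bookkeeping slip.

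First, you read \eqref{auttor3nuldos} by columns (``$\theta(e_5)=e_5$'', ``$\theta(e_6)=txe_5+te_6$''), whereas in this paper the matrix of an automorphism records images in its \emph{rows} (compare \eqref{tsrifeht}, \eqref{seisporseis}, \eqref{auttor3nul}); under your reading the entries $a_1,\dots,a_{12}$ would sit inside $\theta(f_1),\theta(f_2)$ and would already vanish because $\theta(I)=I$ is known, making the lemma trivial and leaving no room for the parameter $u$ --- a sign the reading is wrong. With the correct reading, $a_{2i-1},a_{2i}$ are the $I$-components of $\theta(e_i)$ for $i=1,\dots,6$, and here is the point your choice of products misses: since $I\,W(2)_R=0$ while $W(2)_R\,I\subset I$, in any identity $\theta(ab)=\theta(a)\theta(b)$ the $I$-component of the \emph{left} factor $\theta(a)$ is annihilated, so the only unknowns that can appear are those attached to the \emph{right} factor $b$. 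All of your products ($e_1e_7$, $e_2e_7$, $e_3e_8$, $e_4e_7$, $e_5e_7$, $e_5e_8$, $e_6e_7$, $e_6e_8$, $e_7e_1$, $e_7e_3$) have $e_7$ or $e_8$ on the right; consequently these identities cannot involve $a_1,\dots,a_8$ at all --- for instance $\theta(e_1)\theta(e_7)=\theta(e_7)$ holds identically, whatever $a_1,a_2$ are --- and $\theta(e_7)\theta(e_1)$ constrains $a_1$, not $a_{15},a_{16}$ as you assert. The products that do the work are those with $e_1,\dots,e_6$, respectively $f_1,f_2$, on the right: $e_3e_1=-2e_3$, $e_5e_1=-2e_1$, $e_2e_1=3e_2$ force $a_1=\cdots=a_4=0$; then $e_5e_3=-e_3$, $e_2e_3=2e_1$, $e_3^2=-3e_4$ force $a_5=\cdots=a_8=0$; $e_2e_5=0$ and $e_3e_5=e_6$ give the relations among $a_9,\dots,a_{12}$; and $e_2f_2=f_1$, $e_3f_1=f_2$ (products valid in characteristic $2$) give those among $a_{13},\dots,a_{16}$. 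A final caution: carrying this out against the multiplication table yields $a_{10}=txa_9$, $a_{11}=0$, $a_{12}=ta_9$, $a_{14}=txa_{13}$, $a_{15}=0$, $a_{16}=ta_{13}$, which is \eqref{nlhba} with the roles of $f_1$ and $f_2$ interchanged; as printed, row six of \eqref{nlhba} would give $\theta(e_6)=te_6+utf_1$, and then $\theta(e_6)^2=ut^2(e_6f_1)=ut^2f_2\neq 0=\theta(e_6^2)$ whenever $u\neq 0$. So the closing step you wave off (checking that the matrices obtained really are automorphisms) is not a formality; it is exactly where such discrepancies are caught, and your proof must include it.
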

We have the relations 
\begin{center}$\Omega_{t,x,v,u}\Omega_{t',x',v',u'}=\Omega_{tt',x+x'/t,vv',u'+uv'/t'}$ and
$\Omega_{t,x,v,u}^{-1}=\Omega_{1/t,tx,1/v,tu/v}$.\end{center} 
The set 
$G_1:=\{\Omega_{t,x,1,0}\colon t\in R^\times, x\in R\}$ is a subgroup of $\aut(W(2)_R)$ isomorphic to $\Aff_2(R)$. Indeed if we consider 
$\Aff_2(R)=\left\{\tiny\begin{pmatrix}1 & x\cr 0 & t\end{pmatrix}\colon t\in R^\times, x\in R\right\}$ we have a group isomorphism $\g_1\colon\Aff_2(R)\to G_1$ such that $\tiny\begin{pmatrix}1 & x\cr 0 & t\end{pmatrix}\mapsto\Omega_{t,t^{-1}x,1,0}$.  
On the other hand  $G_2:=\{\Omega_{1,0,v,u}\colon v\in R^\times, u\in R\}$ is also a subgroup of $\aut(W(2)_R)$ isomorphic to $\Aff_2(R)$ via the map $\g_2\colon\Aff_2(R)\to G_2$ such that $\tiny\begin{pmatrix}1 & x\cr 0 & t\end{pmatrix}\mapsto\Omega_{1,0,t,x}$.
It is easily seen that $G_2$ is a normal subgroup of $\aut(W(2)_R)$ and the
map $\rho\colon G_1\to\aut(G_2)$ given by 
$$\rho(\Omega_{t,x,1,0})(\Omega_{1,0,v,u})=\Omega_{t,x,1,0}\Omega_{1,0,v,u}\Omega_{t,x,1,0}^{-1}=\Omega_{1,0,v,t u}$$
is a group homomorphism. 
We also have $\Omega_{t,x,v,u}=\Omega_{t,x,1,0}\Omega_{1,0,v,u}$ and 
so $\Aut(W(2)_R)=G_2\ltimes G_1$ with multiplication
$$(g_2g_1)(g_2'g_1')=\left[g_2\rho(g_1)(g_2')\right](g_1g_1').$$
If we define $\tau_x,t:=\tiny\begin{pmatrix}1 & x\cr 0 & t\end{pmatrix}$ so that
$\Aff_2(R)=\{\tau_{x,t}\colon x\in R, t\in R^\times\}$ then we have an action of 
$\Aff_2(R)$ on itself by automorphisms $\rho'\colon\Aff_2(R)\to\aut(\Aff_2(R))$ given by $\rho'(\tau_{x,t})(\tau_{u,v})=\tau_{tu,v}$. Then there is a commutative
square
\begin{center}\small
\begin{tikzcd}[column sep=small]
G_1 \arrow[r, "\rho"] \arrow[d,"\g_1"']
& \aut(G_2) \arrow[d, "\hbox{inn}\g_2" ] & \theta\arrow[d,mapsto]\\
\Aff_2(R) \arrow[r, "\rho'"' ]
& \aut(\Aff_2(R)) & \g_2^{-1}\theta\g_2
\end{tikzcd}
\end{center}
and we conclude that $\aut(W(2)_R)\cong\Aff_2(R)\ltimes\Aff_2(R)$.

\subsubsection{The case $3R=0$.}
Note that necessarily $\frac{1}{2}\in R$.
Consider an $R$-algebra $A$ which is a free $R$-module with a finite basis. Let $\M:=\M(A)$ be its multiplication algebra and $\tr\colon\M\to R$ the trace (so $\tr(T)$ is the trace of the matrix of $T$ relative to any basis of the $R$-module $A$). Also denote by $k\colon\M\times\M\to R$ the symmetric $R$-bilinear map $k(T,S):=\tr(TS)$. This satisfies 
$k(TT',S)=k(T,T'S)=K(T',ST)$ for any $T,T',S\in\M$. Thus $\M^\bot:=\{T\in\M\colon k(T,\_)=0\}$ is an ideal of $\M$ and $\M^\bot A$ an ideal of $A$. There is also an action $\aut(A)\times\M\to\M$ such that $\varphi\cdot T=T^*:=\varphi T\varphi^{-1}$ for any $\varphi\in\aut(A)$ and $T\in\M$. Furthermore 
$k(T^*,S^*)=K(T,S)$ for any $S,T\in\M$ so that 
$(\M^\bot)^*\subset\M^\bot$ or equivalently $\aut(A)\cdot\M^\bot\subset\M^\bot$. Consequently the ideal $\M^\bot A$ of $A$ is invariant under automorphisms of $A$: for any $\varphi\in\aut(A)$, $T\in\M^\bot$ and $a\in A$ one has $\varphi(T(a))=T^*\varphi(a)\in\M^\bot A$. 

\begin{Remark}\label{gfurp}\rm
Let $\F$ be an arbitrary field in this Lemma and  $U$ be a finite-dimensional $\F$-algebra, $\M=\M(U)$ its multiplication algebra, $I\triangleleft\M$ and $R\in\alg_\F$. If $j\colon I U\to U$ is the inclusion,
identifying $IU\otimes R$ with $(IU)_R$ via 
$j\otimes 1_R\colon IU\otimes R\to U_R$, we have
$(IU)_R= I_R U_R$.  
\end{Remark}
We now particularize considering $W(2)_R$. 
We start with $W(2)$ over a field $\F$ of characteristic $3$ and take $A=W(2)_R$.
If we denote $\M=\M(W(2))$ then $\M_R$ can be identified
with $\M(W(2)_R)$ (\cite[(2.5) Lemma (a)]{finston}).
Also we have $k\colon\M\times\M\to\F$ as above:
$k(T,S)=\tr(TS)$ inducing $k_R\colon\M_R\times\M_R\to R$ and we have the standard result that
$(\M_R)^\bot\cong (\M^\bot)_R$. By 
Theorem \ref{norbacne} we have $\M^\bot W(2)=\F(e_1+e_5)\oplus\F(e_3+e_6)$
hence by Remark \ref{gfurp}, $\M_R^\bot W(2)_R=R(e_1+e_5)\oplus R(e_3+e_6)$. So this ideal is
invariant under automorphisms of $W(2)_R$.

Next we compute the quotient algebra $W(2)_R/I$ where $I=R(e_1+e_5)\oplus R(e_3+e_6)$. We consider a basis
of $W(2)_R/I$ given by
$$\begin{cases}f_i=e_i+I, \hbox{ for } i=1,2,3,4,\cr 
f_5=e_8+I,\cr
f_6=2e_7+I.\end{cases}$$
The multiplication of the quotient algebra relative to this basis is given in the following table 
\begin{center}
\begin{longtable}{|c|c|c|c|c|c|c|}
\hline
      & $f_1$ & $f_2$ & $f_3$ & $f_4$ & $f_5$ & $f_6$ \\ \hline
$f_1$ & $2f_1$ & $0$ & $f_3$ & $0$ & $2f_5$ & $f_6$  \\ \hline
$f_2$ & $0$ & $0$ & $2f_1$ & $f_3$ & $0$ & $2f_5$ \\ \hline
$f_3$ & $f_3$ & $2f_1$ & $0$ & $0$ & $f_6$ & $0$  \\ \hline
$f_4$ & $0$ & $0$ & $0$ & $0$ & $0$ & $0$ \\ \hline
$f_5$ & $f_1$ & $0$ & $2f_3$ & $0$ & $f_5$ & $2f_6$ \\ \hline
$f_6$ & $2f_3$ & $f_1$ & $0$ & $0$ & $2e_6$ & $0$ \\ \hline
\end{longtable}
\end{center}
So we conclude that $W(2)_R/I\cong W_2(R)$ and any automorphism of $W(2)_R$ induces an automorphism
of $W_2(R)$ whose matrix relative to the basis of the
$f_i$'s is given in \eqref{qaesaev}.
Consequently any automorphism $\theta$ of $W(2)_R$ acts in the form 
\begin{longtable}{lcl}
$\theta(e_1)$ &$=$&$e_1+c e_3+\left(e_1+e_5\right) t_1+\left(e_3+e_6\right) t_2=\left(t_1+1\right)e_1+ \left(c+t_2\right)e_3+t_1 e_5 +t_2 e_6,$\\

$\theta(e_2)$ & $=$&$-\frac{c }{a}e_1+\frac{1}{a}e_2+\frac{c^2}{a}e_3-\frac{c^3}{a} e_4+t_3\left(e_1+e_5\right) +t_4\left(e_3+e_6\right)=$\\

\multicolumn{3}{r}{$\left(t_3-\frac{c}{a}\right)e_1+\frac{1}{a}e_2+
\left(\frac{c^2}{a}+t_4\right)e_3 -\frac{c^3}{a}e_4+t_3e_5+t_4e_6,$}\\

$\theta(e_3)$ & $=$&$a e_3+t_5\left(e_1+e_5\right) +t_6\left(e_3+e_6\right)=\left(a+t_6\right)e_3 +t_5e_1 +t_5e_5+t_6 e_6,$\\

$\theta(e_4)$ & $=$&$a^2 e_4+t_7\left(e_1+e_5\right) +t_8\left(e_3+e_6\right)= t_7e_1+t_8 e_3 +a^2 e_4+t_7e_5 +t_8 e_6.$
\end{longtable}
But imposing the conditions $\theta(e_ie_j)=\theta(e_i)\theta(e_j)$ for $i,j\in\{1,2,3,4\}$ we get 
$$t_2=c t_1, t_i=0 \text{ for } i\ge 3.$$
Thus, the coordinates of $\theta(e_i)$, $i=1,2,3,4$ relative to the $\{e_j\}_{j=1}^8$ writen in matrix form give 
$$\tiny\left(
\begin{array}{cccccccc}
 1 & 0 & c & 0 & 0 & 0 & 0 & 0 \\
 -\frac{c}{a} & \frac{1}{a} & \frac{c^2}{a} & -\frac{c^3}{a} & 0 & 0 & 0 & 0 \\
 0 & 0 & a & 0 & 0 & 0 & 0 & 0 \\
 0 & 0 & 0 & a^2 & 0 & 0 & 0 & 0 \\
\end{array}
\right).$$
On the other hand since the image of $e_1+e_5$ and $e_3+e_6$
is in $R(e_1+e_5)\oplus R(e_3+e_6)$ we have 

\begin{longtable}{lcl}
    $\theta(e_5)$ & $=$ &$-\theta(e_1)+x_1\left(e_1+e_5\right)+x_2
    \left(e_3+e_6\right) x_2=$\\
    &&  $\left(-c-t_2+x_2\right)e_3 +\left(-t_1+x_1-1\right)e_1 + \left(x_1-t_1\right)e_5+\left(x_2-t_2\right)e_6 ,$\\
    
    $\theta(e_6)$ & $=$&$-\theta(e_3)+x_3\left(e_1+e_5\right)+x_4
    \left(e_3+e_6\right) x_2=$\\
    &&  $\left(-a-t_6+x_4\right)e_3 +\left(x_3-t_5\right)e_1 + \left(x_3-t_5\right)e_5+\left(x_4-t_6\right)e_6.$ 
\end{longtable}
Imposing the conditions $\theta(e_ie_j)=\theta(e_i)\theta(e_j)$ for $i\in\{1,2,3,4\}$ and $j\in\{5,6\}$ we get 
\begin{center}$t_1=\frac{t_6}{a},$ \  $x_2 = c x_1,$ \  $x_3=0,$  \ $x_1=\frac{x_4}{a}$\end{center}
and the coordinates of $\theta(e_i)$ with $i=1,\ldots, 6$ relative to the basis $\{e_j\}_{j=1}^8$ writen in a matrix form are 
$$\tiny
\left(
\begin{array}{cccccccc}
 1 & 0 & c & 0 & 0 & 0 & 0 & 0 \\
 \frac{2 c}{a} & \frac{1}{a} & \frac{c^2}{a} & \frac{2 c^3}{a} & 0 & 0 & 0 & 0 \\
 0 & 0 & a & 0 & 0 & 0 & 0 & 0 \\
 0 & 0 & 0 & a^2 & 0 & 0 & 0 & 0 \\
 b-1 & 0 & c (b -1) & 0 & b & b c & 0 & 0 \\
 0 & 0 & a (b-1) & 0 & 0 & a b & 0 & 0 \\
\end{array}
\right).$$
Finally writing $\theta(e_7)=\sum\l_i e_i$ and $\theta(e_8)=\sum\m_i e_i$
and imposing the conditions $\theta(e_ie_j)=\theta(e_i)\theta(e_j)$ for $i\in\{1,\ldots, 8\}$ and $j\in\{7,8\}$ we get the matrix of a general automorphism $\theta\in\aut_R(W(2)_R)$ which is 
{\tiny \begin{equation}\label{frogopro}
  M_{a,b,c,k}:=\left(
\begin{array}{cccccccc}
 1 & 0 & c & 0 & 0 & 0 & 0 & 0 \\
 -\frac{c}{a} & \frac{1}{a} & \frac{c^2}{a} & -\frac{c^3}{a} & 0 & 0 & 0 & 0 \\
 0 & 0 & a & 0 & 0 & 0 & 0 & 0 \\
 0 & 0 & 0 & a^2 & 0 & 0 & 0 & 0 \\
 b-1 & 0 & b c-c & 0 & b & b c & 0 & 0 \\
 0 & 0 & a b-a & 0 & 0 & a b & 0 & 0 \\
 0 & 0 & a k & 0 & 0 & a k & a & 0 \\
 -k & 0 & -c k & 0 & -k & -c k & -c & 1 \\
\end{array}
\right)
\end{equation}}
with $a,b\in R^\times$, $k,c\in R$.

\begin{Th}
If $3R=0$ the matrix of any automorphism of $W(2)_R$ relative to a basis $\{e_i\}$ with multiplication table as in the table of multiplication of $W(2)$  is of the form \eqref{frogopro}
with $a,b\in R^\times$, $c,k\in R$.
\end{Th}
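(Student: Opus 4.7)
The plan is to reduce the statement to the automorphism description of $W_2(R)$ already obtained in Theorem \ref{tramart}, by exhibiting a canonical automorphism-invariant ideal of $W(2)_R$ and passing to the quotient. First, I would invoke the trace-form analysis carried out in the proof of Theorem \ref{norbacne}: the radical $\M^\bot$ of the form $\esc{T,S} := \tr(TS)$ on $\M(W(2))$ is a two-sided ideal with $\M^\bot \cdot W(2) = \F(e_1+e_5) \oplus \F(e_3+e_6)$. Via the identification $\M(W(2)_R) \cong \M(W(2))_R$ (\cite[(2.5)~Lemma]{finston}) together with Remark \ref{gfurp}, the subspace $I := R(e_1+e_5) \oplus R(e_3+e_6)$ of $W(2)_R$ equals $\M(W(2)_R)^\bot \cdot W(2)_R$. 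Since the conjugation action of $\aut_R(W(2)_R)$ on $\M(W(2)_R)$ preserves the trace form, $I$ is stable under every $\theta \in \aut_R(W(2)_R)$.

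Second, I would compute the quotient $W(2)_R/I$ in the basis $\{f_i\}_{i=1}^6$ with $f_i = e_i+I$ for $i \le 4$, $f_5 = e_8+I$, and $f_6 = 2e_7+I$, and identify it with $W_2(R)$ by direct comparison of structure constants. Theorem \ref{tramart} then produces the matrix of the induced automorphism $\bar\theta$ in the form $M_{a,b,c}$ from \eqref{qaesaev}, with $a,b \in R^\times$ and $c \in R$.

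Third, I would lift: writing $\theta(e_i)$ as a chosen representative of $\bar\theta(f_i)$ plus an arbitrary element $t_{2i-1}(e_1+e_5) + t_{2i}(e_3+e_6) \in I$ for $i=1,\dots,4$ introduces eight auxiliary scalars. Applying $\theta(e_ie_j) = \theta(e_i)\theta(e_j)$ for $i,j \in \{1,2,3,4\}$ collapses these to $t_2 = c t_1$ with $t_i = 0$ for $i \ge 3$. Next, $\theta(e_5)$ and $\theta(e_6)$ are forced into the form $-\theta(e_1)+I$ and $-\theta(e_3)+I$, and the mixed products for $j \in \{5,6\}$ eliminate $t_1$ and leave one surviving parameter. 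Finally, applying the same $\M^\bot$-type argument (or a dual trace-form invariant) to pin down the right-hand block constrains $\theta(e_7), \theta(e_8)$ to involve a single further scalar $k \in R$; the remaining products $\theta(e_ie_j)$ for $j \in \{7,8\}$ then determine all coordinates, producing the matrix \eqref{frogopro}. A closing verification that every such $M_{a,b,c,k}$ does preserve the multiplication table supplies the converse direction.

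The main obstacle is the lifting step: the eight-dimensional algebra yields a large system of quadratic constraints, and the challenge is to impose them in an order producing the clean four-parameter family $(a,b,c,k)$ rather than a tangle of redundant relations. Processing first the products within $\{e_1,\dots,e_4\}$, then the mixed products with $\{e_5,e_6\}$, and finally those with $\{e_7,e_8\}$ appears to be the efficient strategy, and this is precisely the order followed by the computation preceding the statement.
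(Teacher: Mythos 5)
Your proposal is correct and follows essentially the same route as the paper: the trace-form radical $\M^\bot$ gives the automorphism-invariant ideal $I=R(e_1+e_5)\oplus R(e_3+e_6)$, the quotient $W(2)_R/I$ is computed in the same basis $\{f_i\}$ and identified with $W_2(R)$ so that Theorem \ref{tramart} applies, and the lift is pinned down by imposing $\theta(e_ie_j)=\theta(e_i)\theta(e_j)$ in exactly the same order (first $i,j\in\{1,2,3,4\}$, then $j\in\{5,6\}$, finally $j\in\{7,8\}$). The only minor deviation is that the paper treats $\theta(e_7),\theta(e_8)$ purely by direct expansion and the product conditions, with no separate trace-form argument for that block, which is the alternative your ``or'' already allows.
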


We have $$M_{a,b,c,d}M_{a',b',c',d'}=M_{aa',bb',c a'+c',d b'+d},\quad M_{a,b,c,d}^{-1}=M_{\frac{1}{a},\frac{1}{b},-\frac{c}{a},-\frac{k}{b}},$$
then $\aut_R(W(2)_R)\cong\Aff_2(R)\times\Aff_2(R)$ via the isomorphism $M_{a,b,c,d}\mapsto\tiny \left(\begin{pmatrix}1 & c\cr 0 & a\end{pmatrix}, \begin{pmatrix}1 & d\cr 0 & b\end{pmatrix}\right)$
and as an affine group scheme $$\affaut(W(2))\cong \hbox{\bf Aff}_2\times \hbox{\bf Aff}_2.$$
In this case we have a direct product dislike the case
$2R=0$ in which the product was semidirect.
%


 \section*{Acknowledgements}
{\bf Funding} 
The first part of this work is supported by the Junta de Andaluc\'{\i}a  through projects UMA18-FEDERJA-119  and FQM-336 and  by the Spanish Ministerio de Ciencia e Innovaci\'on   through project  PID 2019-104236GB-I00,  all of them with FEDER funds;
FCT   UIDB/MAT/00212/2020 and UIDP/MAT/00212/2020.
The second part of this work is supported by the Russian Science Foundation under grant 22-11-00081. 

\medskip

\medskip 

{\bf Compliance with ethical standard}

\medskip 

{\bf Author contributions} 
All authors contributed to the study, conception and
design. All authors read and approved the final manuscript.

{\bf Conflict of interest} 
There is no potential conflict of ethical approval, conflict
of interest, and ethical standards.

{\bf Data Availibility} 
Data sharing is not applicable to this article as no datasets
were generated or analyzed during the current study.

\end{document}